\newtheorem{teo}{Theorem}[section]
\newtheorem{lem}[teo]{Lemma}
\newtheorem{cor}[teo]{Corollary}
\newtheorem{prop}[teo]{Proposition}
\theoremstyle{definition}
\newtheorem{defin}[teo]{Definition}
\newtheorem{oss}[teo]{Remark}
\newtheorem{assumption}{Assumption}
\newtheorem{claim}{Claim}
\renewcommand{\eqref}[1]{\textnormal{(\ref{#1})}}
\numberwithin{equation}{section}
\newcommand{\R}{\mathbb{R}}
\newcommand{\N}{\mathbb{N}}
\title[Optimality for multiscale decompositions]{On the optimality of convergence conditions for multiscale decompositions in imaging and inverse problems}
\author[S.~Rebegoldi]{Simone Rebegoldi}
\author[L.~Rondi]{Luca Rondi}
\address[S.~Rebegoldi]{Dipartimento di Scienze Fisiche, Informatiche e Matematiche, Universit\`a degli Studi di Modena e Reggio Emilia, Via Campi 213/B, 41125 Modena, Italy}
\email{simone.rebegoldi@unimore.it}
\address[L.~Rondi]{Dipartimento di Matematica, Universit\`a degli Studi di Pavia, Via Ferrata 5,
27100 Pavia, Italy}
\email{luca.rondi@unipv.it}
\date{}
\begin{document}

\begin{abstract}
We consider the multiscale procedure developed by Modin, Nachman and Rondi, Adv. Math. (2019), for inverse problems, which was inspired by the multiscale decomposition of images by Tadmor, Nezzar and Vese, Multiscale Model. Simul. (2004).
We investigate under which assumptions this classical procedure
is enough to have convergence in the unknowns space without resorting to use the tighter multiscale procedure from the same paper. We show that this is the case for linear inverse problems  when the regularization is given by the norm of a Hilbert space. Moreover, in this setting the multiscale procedure improves the stability of the reconstruction. On the other hand, we show that, for the classical multiscale procedure, convergence in the unknowns space might fail even for the linear case with a Banach norm as regularization.

\bigskip

\noindent\textbf{AMS 2020 Mathematics Subject Classification:} 
68U10 (primary);
65J22 (secondary)

\medskip

\noindent \textbf{Keywords:} multiscale decomposition,
imaging, inverse problems, regularization
\end{abstract}

\maketitle

\setcounter{section}{0}
\setcounter{secnumdepth}{2}

\section{Introduction}

We consider the multiscale decomposition developed in \cite{M-N-R}, which extended to inverse problems and other imaging applications the
so-called $(BV$-$L^2)$ multiscale decomposition of images introduced in \cite{T-N-V} and based on the classical ROF model for denoising \cite{R-O-F}.
We briefly review this general multiscale procedure and its main features.

Let $X$ be a real Banach space with norm $\|\cdot\|=\|\cdot\|_X$. Let $E$ be a
closed nonempty subset of $X$.

Let $Y$ be a metric space with distance $d=d_Y$. Let $\tilde{\Lambda}\in Y$ and $\Lambda:E\to Y$ such that
 the function $E\ni \sigma\mapsto d(\tilde{\Lambda},\Lambda(\sigma))$  is continuous with respect to the (strong) convergence in $X$.

The model we have in mind is the following kind of inverse problem. We aim to recover the unknown properties of a physical system by some kind of indirect measurements, for example by
 suitably probing it and measuring its reaction. In this case
$X$ is the \emph{unknown data space} or \emph{unknowns space}, with $E$ representing the admissible ones, and $Y$ is the \emph{measurements space}. The possibly nonlinear operator $\Lambda$ is the operator mapping the unknown data into the corresponding measurements. We aim to recover the unknown data $\tilde{\sigma}\in E$ from $\tilde{\Lambda}$, an approximation of the corresponding measurements $\Lambda(\tilde{\sigma})$. Even if $\Lambda$ is injective, that is, we have uniqueness for the inverse problem, usually its inverse
is not continuous, that is, we lack stability of the inverse problem. In order to recover stability, we have to impose suitable a priori assumptions on the unknowns or, from the numerical point of view, adopt a suitable regularization procedure. Since here we use a variational method of reconstruction, we focus on a Tikhonov type regularization with a regularization operator $R$.

Let us fix $R:X\to [0,+\infty]$, with $R(0)=0$, and two positive constants $\alpha$ and $\beta$ such that the following holds.
\begin{assumption}\label{existenceassum}
For $\hat{\sigma}=0$ and for any $\hat{\sigma}\in E$ and any $\lambda>0$ the following minimization problem admits a solution
\begin{equation}\label{existenceminimizereq}
\min\left\{\lambda\big(d(\tilde{\Lambda},\Lambda(\hat{\sigma}+u))\big)^{\alpha}+(R(u))^{\beta}:\ \hat{\sigma}+u\in E\right\}.
\end{equation}
\end{assumption}

Inspired by \cite{T-N-V}, in \cite{M-N-R} the following \emph{multiscale procedure} was developed.
Fixed positive parameters $\lambda_n$, $n\geq 0$, 
let $\sigma_0=u_0\in E$ solve
\begin{equation}\label{regularizedpbm}
\min\left\{\lambda_0\big(d(\tilde{\Lambda},\Lambda(u))\big)^{\alpha}+(R(u))^{\beta}:\ u\in E\right\}.
\end{equation}
Then by induction we define $\sigma_n$, $n\geq 1$, as follows. Let $u_n$ be a solution to
 \begin{equation}\label{inductiveconstr}\min\left\{\lambda_n\big(d(\tilde{\Lambda},\Lambda(\sigma_{n-1}+u))\big)^{\alpha}+(R(u))^{\beta}:\ \sigma_{n-1}+u\in E\right\}\end{equation}
and we denote $\sigma_n=\sigma_{n-1}+u_n$, that is,
 $\sigma_n$ is the partial sum
 \begin{equation}\label{partialsum1}
 \sigma_n=\sum_{j=0}^nu_j\quad\text{for any }n\in\mathbb{N}.
 \end{equation}
 By Assumption~\ref{existenceassum}, the sequence $\{\sigma_n\}_{n\geq 0}$ exists, but it might be not uniquely determined.
Since 
\begin{equation}
d(\tilde{\Lambda},\Lambda(\sigma_n))\leq d(\tilde{\Lambda},\Lambda(\sigma_{n-1}))\quad\text{for any }n\geq 1,
\end{equation}
we can define
$$\varepsilon_0=\lim_nd(\tilde{\Lambda},\Lambda(\sigma_n))$$
and note that
$$\varepsilon_0\geq \delta_0=\inf\big\{d(\tilde{\Lambda},\Lambda(\sigma)):\ \sigma\in E\big\}.$$

The following two theoretical questions are of interest.
\begin{enumerate}[1)]
\item \textbf{Convergence in the measurements space.}  Prove that $\{\sigma_n\}_{n\geq 0}$ is a minimizing sequence, that is,
$\varepsilon_0=\delta_0$ or
$$\lim_nd(\tilde{\Lambda},\Lambda(\sigma_n))=\inf\big\{d(\tilde{\Lambda},\Lambda(\sigma)):\ \sigma\in E\big\}.$$
\item \textbf{Convergence in the unknowns space.} If 1) holds, prove that $\sigma_n$ converges in $X$, possibly up to a subsequence, to
$\sigma_{\infty}\in E$.
Moreover, show that $\sigma_{\infty}$ is a solution to the inverse problem, namely
it solves the minimization problem
\begin{equation}\label{minexist}
\min\big\{d(\tilde{\Lambda},\Lambda(\sigma)):\ \sigma\in E\big\}.
\end{equation}
\end{enumerate}

We assume that 1) holds. Indeed, if $\sigma_{n_k}\to \sigma_{\infty}$ in $X$ as $k\to +\infty$, then $\sigma_{\infty}$
solves
\eqref{minexist}.
Therefore, a necessary condition for 2) to hold is that the minimization problem \eqref{minexist} admits a solution.
%

Before discussing questions 1) and 2), let us review the development of the theory so far.
The multiscale decomposition started in a beautiful paper, \cite{T-N-V}, where a noisy image $f$ was decomposed by the same iterative procedure using the ROF model for denoising, \cite{R-O-F},  namely for $f\in L^2(\Omega)$
$$\min\left\{\lambda_0\|f- u\|^{2}_{L^2(\Omega)}+TV(u):\ u\in L^2(\Omega)\right\}.$$
In other words, here $X=E=Y=L^2(\Omega)$, $\Lambda$ is equal to the identity, $R(u)=TV(u)$, $TV$ denoting the total variation, and $\Omega$ is either $\mathbb{R}^2$ or a bounded Lipschitz domain contained in $\R^2$. In \cite{T-N-V} it was proved that for $f\in BV(\Omega)$, or some intermediate space between $BV(\Omega)$ and $L^2(\Omega)$, we have the following multiscale decomposition
$$f=\lim_n\sigma_n=\sum_{n=0}^{+\infty}u_n\quad\text{in }L^2(\Omega).$$
In \cite{T-N-V-2}, such a multiscale decomposition method was applied, without any convergence results, to other imaging problems, for instance to a deblurring problem. In this case, $\Lambda:L^2(\Omega)\to L^2(\Omega)$ is a linear blurring operator (for example the convolution with a suitable point spread function if $\Omega=\R^2)$, and $f$ is a perturbation due to noise of a blurred original image
$\hat{\sigma}$, that is, $f=\Lambda(\hat{\sigma})+\text{noise}$ and the multiscale procedure is based on
$$\min\left\{\lambda_0\|f- \Lambda(u)\|_{L^2(\Omega)}^{2}+TV(u):\ u\in L^2(\Omega)\right\}.$$
The deblurring problem is the prototype for the linear case we discuss below.

In \cite{M-N-R}, the convergence analysis for these multiscale procedures was greatly enhanced and the multiscale theory was extended to include general nonlinear inverse problems and other imaging applications such as image registration in the LDDMM framework. The main results of \cite{M-N-R}, at least for inverse problems, will be summarized below. 
An immediate side effect of \cite[Theorem~2.1]{M-N-R}, which is here recalled as Theorem~\ref{1thm},
was the proof that the multiscale decomposition of images in \cite{T-N-V} is valid for any $f\in L^2(\Omega)$. We point out that such a result was actually announced in \cite{Jaw-Mil} but their proof, which used completely different methods, has never been published.

After \cite{M-N-R}, multiscale procedures gained more and more attention and several further developments appeared. 
In \cite{L-R-V} quantitative convergence and error estimates as well as stopping criteria depending on noise level were analyzed for the linear case. Still in the linear case, in 
\cite{K-R-W}, the use of convex or nonconvex regularization terms was discussed.
In \cite{M-N}, the procedure has been extended to optimal transport problems. In \cite{Wolf} the multiscale procedure has been used for solving a blind deconvolution problem.

Back to our questions,
about issue 1), a comprehensive solution is given in \cite[Theorem~2.1]{M-N-R}, which we recall for the convenience of the reader.
\begin{teo}\label{1thm}
Assume that
\begin{equation}\label{structcondition}
R(0)=0,\ R(-u)=R(u)\text{ and }R(u+\tilde{u})\leq R(u)+R(\tilde{u})\text{ for any }u,\,\tilde{u}\in X.
\end{equation}
 Assume also that
\begin{equation}\label{densecondition}
\{u\in E:\ R(u)<+\infty\}\text{ is dense in }E
\end{equation}
and that Assumption 1 holds.

If
\begin{equation}\label{firstcondcoeff}
\limsup_n \frac{2^{\beta n}}{\lambda_n}<+\infty,\end{equation}
then for the multiscale sequence $\{\sigma_n\}_{n\geq 0}$ given by \eqref{regularizedpbm}, \eqref{inductiveconstr} and \eqref{partialsum1}
we have $\varepsilon_0=\delta_0$, that is,
$$\lim_nd(\tilde{\Lambda},\Lambda(\sigma_n))=\inf\big\{d(\tilde{\Lambda},\Lambda(\sigma)):\ \sigma\in E\big\}.$$
\end{teo}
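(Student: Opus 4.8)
The plan is to show that $\varepsilon_0\le d(\tilde{\Lambda},\Lambda(\sigma))$ for every $\sigma\in E$ with $R(\sigma)<+\infty$; since by \eqref{densecondition} such $\sigma$ are dense in $E$ and $\sigma\mapsto d(\tilde{\Lambda},\Lambda(\sigma))$ is continuous for the strong convergence in $X$, this gives $\varepsilon_0\le\delta_0$, the reverse inequality $\varepsilon_0\ge\delta_0$ having already been observed. Fix then $\sigma\in E$ with $M:=R(\sigma)<+\infty$, put $\eta:=d(\tilde{\Lambda},\Lambda(\sigma))$ and $a_n:=d(\tilde{\Lambda},\Lambda(\sigma_n))\downarrow\varepsilon_0$. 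The key device is to test the $n$-th problem \eqref{inductiveconstr} against the admissible competitor $u=\sigma-\sigma_{n-1}$ (admissible because $\sigma_{n-1}+u=\sigma\in E$), which gives
\begin{equation*}
\lambda_n a_n^{\alpha}+(R(u_n))^{\beta}\le \lambda_n\eta^{\alpha}+(R(\sigma-\sigma_{n-1}))^{\beta},
\end{equation*}
hence $a_n^{\alpha}\le\eta^{\alpha}+\big(\lambda_n^{-1/\beta}R(\sigma-\sigma_{n-1})\big)^{\beta}$. Since $a_n^{\alpha}\downarrow\varepsilon_0^{\alpha}$, it therefore suffices to prove that $\liminf_n\lambda_n^{-1/\beta}R(\sigma-\sigma_{n-1})=0$: evaluating the last inequality along a subsequence realizing this liminf forces $\varepsilon_0^{\alpha}\le\eta^{\alpha}$, i.e.\ $\varepsilon_0\le\eta$.

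For the estimate on $R(\sigma-\sigma_{n-1})$ I would use the structural properties \eqref{structcondition}: since $\sigma_{n-1}=\sum_{j=0}^{n-1}u_j$, symmetry and subadditivity yield $R(\sigma-\sigma_{n-1})\le M+\sum_{j=0}^{n-1}R(u_j)$. To control the single terms, test \eqref{inductiveconstr} at step $j\ge1$ against $u=0$ (admissible since $\sigma_{j-1}\in E$, and $R(0)=0$): one gets $\lambda_j a_j^{\alpha}+(R(u_j))^{\beta}\le\lambda_j a_{j-1}^{\alpha}$, hence $(R(u_j))^{\beta}/\lambda_j\le a_{j-1}^{\alpha}-a_j^{\alpha}$; for $j=0$, testing \eqref{regularizedpbm} against $u=\sigma$ gives $(R(u_0))^{\beta}\le\lambda_0\eta^{\alpha}+M^{\beta}$, a fixed constant. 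Summing the telescoping bound, $\sum_{j\ge1}(R(u_j))^{\beta}/\lambda_j\le a_0^{\alpha}-\varepsilon_0^{\alpha}<+\infty$; thus, writing $t_j:=(R(u_j))^{\beta}/\lambda_j$, we have $\sum_j t_j<+\infty$ and $R(u_j)=\lambda_j^{1/\beta}t_j^{1/\beta}$.

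The delicate point, which I expect to be the main obstacle, is to pass from these ingredients to $\liminf_n\lambda_n^{-1/\beta}R(\sigma-\sigma_{n-1})=0$ using only \eqref{firstcondcoeff}, which merely says $\lambda_n\ge 2^{\beta n}/C$ for $n$ large, with no upper bound on $\lambda_n$. I would argue by contradiction: if $\liminf_n\lambda_n^{-1/\beta}S_{n-1}=:c>0$ with $S_n:=\sum_{j=0}^n R(u_j)$, then $\lambda_n^{1/\beta}\le(2/c)S_{n-1}$ for all large $n$, so $S_n-S_{n-1}=R(u_n)=\lambda_n^{1/\beta}t_n^{1/\beta}\le(2/c)t_n^{1/\beta}S_{n-1}$, whence $S_n\le S_{n-1}\big(1+(2/c)t_n^{1/\beta}\big)$ and, iterating, $S_n\le C_0\exp\big((2/c)\sum_{j\le n}t_j^{1/\beta}\big)$. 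Now $\sum_{j\le n}t_j^{1/\beta}$ stays bounded when $\beta\le1$ (as $t_j\to0$ forces $t_j^{1/\beta}\le t_j$ eventually) and is $O(n^{1-1/\beta})=o(n)$ when $\beta>1$ (by H\"older's inequality together with $\sum_j t_j<+\infty$); either way $S_n=\exp(o(n))$, hence $\lambda_n=\exp(o(n))$, contradicting $\lambda_n\ge 2^{\beta n}/C$. Therefore $\liminf_n\lambda_n^{-1/\beta}S_{n-1}=0$, and since $\lambda_n^{-1/\beta}(M+S_{n-1})\ge\lambda_n^{-1/\beta}R(\sigma-\sigma_{n-1})$ while $\lambda_n^{-1/\beta}M\to0$ (because $\lambda_n\to+\infty$), the desired liminf vanishes, which completes the argument. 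All power manipulations above use only that $x\mapsto x^{\alpha}$ and $x\mapsto x^{1/\beta}$ are increasing on $[0,+\infty)$, valid since $\alpha,\beta>0$.
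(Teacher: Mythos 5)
Your proof is correct. Note that the paper does not actually prove Theorem~\ref{1thm}: it is recalled from \cite{M-N-R}, so the only comparison available is with the argument there. Both proofs rest on the same two competitors --- $u=0$, which gives the monotonicity $a_n\downarrow\varepsilon_0$ and the telescoping bound $(R(u_j))^{\beta}\le\lambda_j(a_{j-1}^{\alpha}-a_j^{\alpha})$, and $u=\sigma-\sigma_{n-1}$ for a fixed $\sigma\in E$ with $R(\sigma)<+\infty$, combined with \eqref{structcondition} and the density \eqref{densecondition} --- but the mechanism by which \eqref{firstcondcoeff} enters is genuinely different. The argument of \cite{M-N-R} assumes $\varepsilon_0>\eta$ from the outset and exploits a doubling bound of the type $R(\sigma-\sigma_n)\le 2R(\sigma-\sigma_{n-1})$ (a consequence of $R(u_n)\le R(\sigma-\sigma_{n-1})$), which is where the threshold $2^{\beta n}$ originates; the contradiction then comes from a decay estimate for a suitably rescaled version of $R(\sigma-\sigma_n)$. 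You instead extract the summability $\sum_j t_j<+\infty$, $t_j=(R(u_j))^{\beta}/\lambda_j$, from the telescoping bound and run a discrete Gronwall/product estimate: if $\liminf_n\lambda_n^{-1/\beta}S_{n-1}$ were positive, then $S_n\le S_{n-1}\bigl(1+Ct_n^{1/\beta}\bigr)$ forces $S_n=\exp(o(n))$ while trapping $\lambda_n^{1/\beta}$ below $S_{n-1}$, incompatible with $\lambda_n\ge 2^{\beta n}/C$. This alternative route is clean, makes the role of the exponential threshold transparent, and treats all $\beta>0$ uniformly (H\"older for $\beta>1$, the comparison $t_j^{1/\beta}\le t_j$ for $\beta\le 1$). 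Two cosmetic remarks, neither affecting correctness: the inner contradiction should allow $c=+\infty$ (replace $2/c$ by $\min\{2/c,1\}$, say), and the recursion $S_n\le S_{n-1}\bigl(1+(2/c)t_n^{1/\beta}\bigr)$ only holds from the index where the liminf bound takes effect, which is harmless for the asymptotics.
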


The following assumptions on $R$ guarantee that the required hypothesis on $R$ in Theorem~\ref{1thm} are satisfied.
\begin{assumption}\label{strongerassum} We assume that $R$ satisfies
\eqref{structcondition}, \eqref{densecondition},
\begin{equation}\label{compactnesscondition}
\{u\in X:\ R(u)\leq b\}\text{ is relatively compact in }X,\text{ for any }b\in \mathbb{R}
\end{equation}
and
\begin{equation}\label{lsccondition}
R\text{ is lower semicontinuous on }X,\text{ with respect to the convergence in }X.
\end{equation}
\end{assumption}

Issue 2) is much more delicate. Given Assumption~\ref{strongerassum}, in \cite{M-N-R} a so-called \emph{tighter multiscale procedure} was developed, for which a result analogous to Theorem~\ref{1thm} holds, see
\cite[Theorem~2.4]{M-N-R}. For such a tighter version, in \cite[Theorem~2.5]{M-N-R} convergence, up to a subsequence, of $\sigma_n$ in $X$ was proved, provided the following is satisfied.

\begin{assumption}\label{stronger}
There exists $\hat{\sigma}\in E$ such that
\begin{equation}\label{anothermin00}
d(\tilde{\Lambda},\Lambda(\hat{\sigma}))
=\delta_0=\min\big\{d(\tilde{\Lambda},\Lambda(\sigma)):\ \sigma\in E\big\}\quad\text{and}\quad R(\hat{\sigma})<+\infty.
\end{equation} 
Without loss of generality, one can further assume that $\hat{\sigma}$ solves the following minimization problem
\begin{equation}\label{minimal00}
\min\big\{ R(\sigma):\ \sigma\in E\text{ and }d(\tilde{\Lambda},\Lambda(\sigma))=\delta_0\big\}=R_0<+\infty.
\end{equation}
\end{assumption}

Under Assumption~\ref{stronger}, we call set of \emph{optimal solutions} of the inverse problem the set $S_{opt}$ given by
$$S_{opt}=\big\{\sigma\in E:\ d(\tilde{\Lambda},\Lambda(\sigma))=\delta_0\text{ and }R(\sigma)=R_0\big\}.$$

\begin{oss}If Assumption~\ref{stronger} holds, then we can drop in Theorem~\ref{1thm} the denseness condition \eqref{densecondition}.
\end{oss}

In this paper we investigate whether, under Assumption~\ref{stronger}, the tighter procedure is really needed or we can guarantee convergence of $\sigma_n$ also for the classical multiscale procedure. We consider two particular cases.

\begin{enumerate}[A)]
\item \textbf{The linear case.}
Let $X$ and $Y$ be Banach spaces and $E=X$. Let $\Lambda:X\to Y$ be a continuous linear operator.
Let $R:X\to [0,+\infty]$ be a convex function on $X$.
We assume that $\alpha\geq 1$ and $\beta\geq 1$.

\item \textbf{The convex case.}
Let $X$ be a Banach space and $Y$ be a metric space.
We assume that $E$ is convex and that $E\ni\sigma\to d(\tilde{\Lambda},\Lambda(\sigma))$ is a continuous convex function.
Let $R:X\to [0,+\infty]$ be a convex function on $X$.
We assume that $\alpha\geq 1$ and $\beta\geq 1$.
\end{enumerate}

We note that the linear case is a particular case of the convex one. 
For the convex case, an interesting assumption on $R$ is the following.
\begin{assumption}\label{strictconvexity}
For any $u$ and $\tilde{u}\in X$ such that $u\neq \tilde{u}$ and $R(u)=R(\tilde{u})<+\infty$, we have
$$R(u+t(\tilde{u}-u))<R(u)=R(\tilde{u})\quad\text{for any }t\in (0,1).$$
\end{assumption}

Moreover, we talk of a \emph{Banach norm regularization} if $R$ is of the following kind.
\begin{assumption}\label{Banachassum}
Let $F$ be a Banach space such that 
$\tilde{E}=F\cap X\neq \{0\}$. Then we define
the regularization $R$ for any $u\in X$ as
\begin{equation}\label{banachregular}
R(u)=\left\{\begin{array}{ll}
\|u\|_{F} &\text{if }u\in F\cap X\\
+\infty &\text{otherwise}.
\end{array}\right.
\end{equation}
We note that $R$ satisfies \eqref{structcondition} and it is convex on $\tilde{E}$, thus on $X$.
\end{assumption}

\begin{oss}
We note that if $R$ is the norm of a vector space, such as in Assumption~\ref{Banachassum} for example, Assumption~\ref{strictconvexity} correspond to strict convexity of the normed vector space.
\end{oss}

Finally, we talk of a \emph{Hilbert norm regularization} if $R$ is
as in Assumption~\ref{Banachassum} with $F$ being a Hilbert space, namely if $R$ is of the following kind.
\begin{assumption}\label{hilbertassum}
 Let $H_1$ be a Hilbert space such that $\tilde{E}=H_1\cap X\neq \{0\}$. Then we define
the regularization $R$ for any $u\in X$ as
\begin{equation}\label{hilbertregular}
R(u)=\left\{\begin{array}{ll}
\|u\|_{H_1} &\text{if }u\in H_1\cap X\\
+\infty &\text{otherwise}.
\end{array}\right.
\end{equation}
We note that $R$ satisfies \eqref{structcondition}, it is convex on $\tilde{E}$, thus on $X$, and it satisfies Assumption~\ref{strictconvexity}.
\end{assumption}

About question 2), that is, convergence in the unknowns space for the classical multiscale procedure, 
we have results both of positive kind (Section~\ref{Hilbertregnormsec}) and of negative kind (Section~\ref{examplesec}).

In Section~\ref{convergressec} we show our convergence results for the classical multiscale procedure. Under Assumption~\ref{stronger}, convergence up to subsequences is proved in the convex case when $R$ is a Hilbert norm regularization as in Assumption~\ref{hilbertassum}, see 
Theorem~\ref{hilbertregthm}.
Another interesting feature of the Hilbert norm regularization is that, under Assumption~\ref{stronger}, convergence, both in the measurements space and in the unknowns space, is guaranteed provided $\displaystyle \lim_n\lambda_n=+\infty$ without any further control of the rate with which the sequence goes to $+\infty$, that is, we can drop the assumption \eqref{firstcondcoeff}.
Let us note that under the assumptions of Theorem~\ref{hilbertregthm}, there exists exactly one optimal solution, that is, $S_{opt}=\{\hat{\sigma}\}$, with $\hat{\sigma}$ solving \eqref{minimal00}.
In the linear case, still for a Hilbert norm regularization, something more can be said, Corollary~\ref{linearcase}.
In fact, we have that
$\displaystyle \lim_n \sigma_n=\hat{\sigma}$ in $X$, that is, the following multiscale decomposition holds
$$\hat{\sigma}=\sum_{n=0}^{+\infty}u_n\quad\text{in }X.$$

We also show that for the Hilbert norm regularization, the multiscale procedure helps to improve the stability of the reconstruction for inverse ill-posed problems, with respect to the single-step regularization, see Section~\ref{stabsec}. A numerical illustration and validation of such an improvement of the stability for an image deblurring problem is contained in Section~\ref{numerics}. 

Unfortunately, such results might cease to hold even in the linear case for a Banach norm regularization and under Assumption~\ref{stronger}. Actually, convergence in the unknowns space may fail completely since $\|\sigma_n\|_X$ might be even diverging to $+\infty$. In order to show this, 
in Section~\ref{examplesec} we develop several counterexamples. In  Theorems~\ref{examplethm} and \ref{examplethmsecond} we consider two examples in the discrete case, for spaces of sequences. In Theorem~\ref{examplethmthird} we consider a continuous case, with the same structure as the original construction developed in \cite{T-N-V} on the line, that is, for the multiscale decomposition of signals, but with a linear continuous and injective operator $\Lambda$ which is different from the identity. In other words, for some very nasty linear blurring or deforming operators of signals, the multiscale procedure might not be converging in the unknowns space. We conclude that to guarantee convergence of the classical multiscale procedure some further structural assumptions on the problem or the operator $\Lambda$ are required. We show here that the regularization being the norm of a Hilbert norm suffices. Another interesting example is the so-called \emph{closure bound}, \cite[formula (4.19)]{T}, used in \cite{T} to construct through the multiscale procedure bounded solutions in $\R^N$ to the equation $\mathrm{div}(u)=f$ for $f\in L^N$.

The plan of the paper is as follows.
Section~\ref{linearsec} is a preliminary section where we review the linear case, when $Y=H$ is a Hilbert space, $\alpha=2$ and $\beta=1$. We show a Parseval-like identity (Section~\ref{Parssec}), following Meyer's work \cite{YMey}
we provide a characterization of minimizers (Section~\ref{mincarsec}), and we briefly discuss applications to the classical $(BV\text{-}L^2)$-decomposition of images or signals developed in \cite{T-N-V} (Section~\ref{BVL2sec}).
In Section~\ref{Hilbertregnormsec}, we develop our positive results for the Hilbert norm regularization.
Finally, in Section~\ref{examplesec}, we construct several counterexamples showing the possible failure of convergence in the unknowns space.

\section{The linear case}\label{linearsec}
In this section we review some basic properties of the linear case, when $Y$ is a Hilbert space. Apart from some remarks in Section~\ref{BVL2sec}, all other results in this section are well-known.

Let $Y=H$ be a Hilbert space with scalar product $\langle\cdot,\cdot\rangle_H$ and norm $\|\cdot\|_H$. Let $X$ be a Banach space, $E=X$, and let $\Lambda:X\to H$ be a bounded linear operator.
Let $R:X\to [0,+\infty]$ be a function such that $R(0)<+\infty$.

We consider the following minimization problem, for some positive parameter $\lambda_0$ and a given datum $f\in H$,
\begin{equation}\label{minpbm}
\min\left\{\lambda_0\|f-\Lambda(u)\|_H^2+R(u):\ u\in X  \right\}.
\end{equation}

\begin{oss} The minimum, if it exists, is always finite since for $u=0$ the value of the functional to be minimized is $\|f\|_H^2+R(0)$. Hence, if $u_0$ is a minimizer,  then $R(u_0)<+\infty$.
\end{oss}

We begin with a remark on uniqueness, whose proof is standard.

\begin{oss}\label{uniquenessremark}
Assume that $R$ is convex on $X$.
If a solution $u_0$ to \eqref{minpbm} exists, then
$$u\text{ solves \eqref{minpbm} if and only if }\Lambda(u-u_0)=0\text{ and }R(u)=R(u_0).$$
Thus if $\Lambda$ is injective on
$\tilde{E}=\{u\in X:\ R(u)<+\infty\}$
 or $R$ satisfies Assumption~\ref{strictconvexity}, then uniqueness holds.
\end{oss}

\subsection{Parseval-like identity}\label{Parssec}

We call $\tilde{E}=\{u\in X:\ R(u)<+\infty\}$. From now on, we assume that $\tilde{E}$ is a linear subspace of $X$ and that $R$ is even and positively $1$-homogeneous on $\tilde{E}$, that is,
$R(au)=|a|R(u)$ for any $u\in \tilde{E}$ and any $a\in \R$. We note that $R(0)=0$.

\begin{oss} If we 
further assume that $R$ satisfies the triangle inequality on $\tilde{E}$, that is,
$$R(u+\tilde{u})\leq R(u)+R(\tilde{u})\quad\text{for any }u,\, \tilde{u}\in \tilde{E},$$
then $R$ satisfies \eqref{structcondition} and it is convex on $\tilde{E}$, thus on $X$.
\end{oss}

We begin with the following statement.

\begin{prop} Let $u_0$ be a solution to \eqref{minpbm}. We call $v_0=f-\Lambda(u_0)$, that is, $f=\Lambda(u_0)+v_0$.
Then
\begin{equation}\label{firstproperty}
\langle v_0,\Lambda(u_0)\rangle_{H}=\frac{1}{2\lambda_0}R(u_0).
\end{equation}
Consequently
\begin{equation}\label{secondproperty}
\|f\|_H^2=\|\Lambda(u_0)\|_H^2+\|v_0\|_H^2+\frac{1}{\lambda_0}R(u_0).
\end{equation}
\end{prop}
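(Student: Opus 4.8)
The plan is to exploit the positive $1$-homogeneity of $R$ on $\tilde E$ by testing the minimality of $u_0$ against its rescalings $tu_0$, $t\in\R$. Since $u_0$ solves \eqref{minpbm}, the Remark preceding this statement gives $R(u_0)<+\infty$, hence $u_0\in\tilde E$; as $\tilde E$ is a linear subspace, $tu_0\in\tilde E$ for every $t$, so $R(tu_0)=|t|R(u_0)$. Therefore each $tu_0$ is admissible in \eqref{minpbm}, and the scalar function
\[
g(t)=\lambda_0\|f-\Lambda(tu_0)\|_H^2+R(tu_0)=\lambda_0\big(\|f\|_H^2-2t\langle f,\Lambda(u_0)\rangle_H+t^2\|\Lambda(u_0)\|_H^2\big)+|t|R(u_0)
\]
attains its minimum over $\R$ at $t=1$.

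Next I would differentiate at $t=1$. On the half-line $t>0$ the function $g$ is smooth (a quadratic plus a linear term, since $|t|R(u_0)=tR(u_0)$ there), and $t=1$ is an interior minimizer, so $g'(1)=0$. Computing,
\[
g'(1)=\lambda_0\big(-2\langle f,\Lambda(u_0)\rangle_H+2\|\Lambda(u_0)\|_H^2\big)+R(u_0)=0,
\]
which rearranges to $2\lambda_0\langle f-\Lambda(u_0),\Lambda(u_0)\rangle_H=R(u_0)$, that is, $\langle v_0,\Lambda(u_0)\rangle_H=\tfrac{1}{2\lambda_0}R(u_0)$; this is \eqref{firstproperty}. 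Then \eqref{secondproperty} follows at once by expanding
\[
\|f\|_H^2=\|\Lambda(u_0)+v_0\|_H^2=\|\Lambda(u_0)\|_H^2+2\langle v_0,\Lambda(u_0)\rangle_H+\|v_0\|_H^2
\]
and substituting \eqref{firstproperty}.

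The only point requiring a little care — the "main obstacle," though it is minor — is justifying the derivative computation: one must observe that the absolute value $|t|R(u_0)$ equals $tR(u_0)$ in a neighborhood of $t=1$, so $g$ is differentiable there, and that $t=1$ is genuinely a global minimizer of $g$ on $\R$ (not just a one-sided stationary point), which is immediate because $g(t)\ge g(1)$ for all real $t$ by the minimality of $u_0$ in \eqref{minpbm}. No convexity, compactness, or lower semicontinuity of $R$ is needed here beyond the standing evenness and homogeneity assumptions.
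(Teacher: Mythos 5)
Your proof is correct and is essentially the same as the paper's: the paper tests minimality against the perturbations $u_0+\varepsilon u_0=(1+\varepsilon)u_0$ with $|\varepsilon|<1$ and takes the two one-sided limits as $\varepsilon\to 0^{\pm}$, which is exactly your computation of $g'(1)=0$ after observing that $|t|R(u_0)=tR(u_0)$ near $t=1$. The justification you flag (that $t=1$ is a genuine two-sided minimizer and that homogeneity makes $g$ differentiable there) is precisely the content of the paper's argument, so there is nothing to add.
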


\begin{proof} We have that $u_0$ is a minimizer if and only if
\begin{multline}\label{charmin}
\lambda_0\|v_0\|_H^2+R(u_0)\leq \lambda_0\|v_0-\varepsilon \Lambda(h)\|_H^2+R(u_0+\varepsilon h)\\\quad\text{for any $h\in \tilde{E}$ and any $\varepsilon\in\R$}.
\end{multline}

Let us apply the formula to $h=u_0$ itself, with $|\varepsilon|<1$. We obtain
$$2\varepsilon\lambda_0\langle v_0,\Lambda(u_0)\rangle_{H}\leq \varepsilon R(u_0)+\varepsilon^2\lambda_0\| \Lambda(u_0)\|_H^2.$$
Using $0<\varepsilon<1$, dividing by $\varepsilon$ and letting $\varepsilon$ go to $0$, we deduce that
$$2\lambda_0\langle v_0,\Lambda(u_0)\rangle_{H}\leq R(u_0).$$
The converse inequality is obtained analogously using $-1<\varepsilon<0$.\end{proof}

Let us assume that a solution to \eqref{minpbm} exists for any $f\in H$. Then we can build the multiscale procedure, with a sequence of positive  parameters $\lambda_j$, $j\geq 0$.
We call $v_{-1}=f$. Then, if $u_j$ is the solution at step $j$, and $v_j=v_{j-1}-\Lambda(u_j)$, for any $j\geq 0$, we conclude that for any $n\geq 0$
$$f=\sum_{j=0}^n\Lambda(u_j)+v_n=\Lambda\left(\sum_{j=0}^nu_j\right)+v_n$$
and
\begin{equation}\label{thirdproperty}
\|f\|_H^2=\sum_{j=0}^n\left(\|\Lambda(u_j)\|_H^2+\frac{1}{\lambda_j}R(u_j)\right)+\|v_n\|_H^2.
\end{equation}

Let $$\delta_0=\inf\left\{\|f-\Lambda(\sigma)\|_H:\ \sigma\in X\right\}.$$ Then
$$f=\tilde{f}+\tilde{v}$$
where $\tilde{f}\in\overline{\Lambda(X)}$ and $\tilde{v}$ is orthogonal to $\overline{\Lambda(X)}$, thus $\|\tilde{v}\|_H=\delta_0$.
Thus $v_n=\tilde{v}+\tilde{v}_n$ where $ \tilde{v}_n\in \overline{\Lambda(X)}$ and
$$\|v_n\|^2=\|\tilde{v}\|_H^2+\|\tilde{v}_n\|_H^2=\delta_0^2+\|\tilde{v}_n\|_H^2.$$

We can conclude with the following proposition, containing the Parseval-like identity.
\begin{prop}\label{multprop}
Let us assume that a solution to \eqref{minpbm} exists for any $f\in H$.
If $\tilde{v}_n\to 0$ in $H$, that is, $\displaystyle \lim_n\|f-\Lambda(\sigma_n)\|_H=\delta_0$, then
\begin{equation}\label{multiscale+pars}
f=\tilde{v}+\sum_{j=0}^{+\infty}\Lambda(u_j)\quad\text{and}\quad\|f\|_H^2=\|\tilde{v}\|^2_H+\sum_{j=0}^{+\infty}\left(\|\Lambda(u_j)\|_H^2+\frac{1}{\lambda_j}R(u_j)\right).
\end{equation}
Here the series on the left has to be intended in $H$ and we recall that $\|\tilde{v}\|_H=\delta_0$.
\end{prop}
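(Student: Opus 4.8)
The plan is simply to pass to the limit $n\to+\infty$ in the finite-step identity \eqref{thirdproperty}, which has already been established, using the orthogonal decomposition $v_n=\tilde{v}+\tilde{v}_n$ with $\tilde{v}\perp\overline{\Lambda(X)}$ and $\tilde{v}_n\in\overline{\Lambda(X)}$ recorded just above the statement. The first thing I would note is that the hypothesis is exactly the assertion that $\|v_n\|_H^2=\|\tilde{v}\|_H^2+\|\tilde{v}_n\|_H^2=\delta_0^2+\|\tilde{v}_n\|_H^2\to\delta_0^2$, i.e.\ $\|f-\Lambda(\sigma_n)\|_H\to\delta_0$; in particular $\|v_n\|_H^2$ is convergent, hence bounded.

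For the energy identity, I would use that every summand $\|\Lambda(u_j)\|_H^2+\lambda_j^{-1}R(u_j)$ is nonnegative, since $R\geq 0$ and $\lambda_j>0$. Therefore the partial sums $\sum_{j=0}^n\bigl(\|\Lambda(u_j)\|_H^2+\lambda_j^{-1}R(u_j)\bigr)$ are nondecreasing in $n$, and by \eqref{thirdproperty} they are bounded above by $\|f\|_H^2$; hence the series converges. Letting $n\to+\infty$ in \eqref{thirdproperty} and using $\|v_n\|_H^2\to\delta_0^2=\|\tilde{v}\|_H^2$ yields the second identity in \eqref{multiscale+pars}. For the decomposition in $H$, I would observe that, since $\sigma_n=\sum_{j=0}^n u_j$ and $f=\Lambda(\sigma_n)+v_n=\sum_{j=0}^n\Lambda(u_j)+\tilde{v}+\tilde{v}_n$, the partial sums of $\sum_j\Lambda(u_j)$ equal $f-\tilde{v}-\tilde{v}_n$, which converge to $f-\tilde{v}$ in $H$ because $\tilde{v}_n\to 0$; this is the first identity in \eqref{multiscale+pars}.

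I do not expect a genuine obstacle here: the statement is essentially a limiting form of \eqref{thirdproperty}, and the only points needing (minimal) care are invoking nonnegativity of the summands to get monotone convergence of the numerical series, and bookkeeping of which of $v_n$, $\tilde{v}$, $\tilde{v}_n$ lies in $\overline{\Lambda(X)}$ and which is orthogonal to it so that the Pythagorean splitting $\|v_n\|_H^2=\|\tilde{v}\|_H^2+\|\tilde{v}_n\|_H^2$ is legitimate.
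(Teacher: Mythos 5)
Your proposal is correct and follows exactly the route the paper intends: the proposition is stated as an immediate consequence of the identity \eqref{thirdproperty} and the orthogonal splitting $v_n=\tilde{v}+\tilde{v}_n$, and the paper gives no further proof. Your filling-in of the details (monotone convergence of the nonnegative partial sums bounded by $\|f\|_H^2$, and $\sum_{j=0}^n\Lambda(u_j)=f-\tilde{v}-\tilde{v}_n\to f-\tilde{v}$ in $H$) is precisely the intended argument.
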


\subsection{Characterization of minimizers}\label{mincarsec}

Let $R$ be a Banach norm regularization, that is, it satisfies Assumption~\ref{Banachassum} for a Banach space $F$.
We recall that $R$ satisfies \eqref{structcondition} and it is convex on $\tilde{E}$, thus on $X$. Moreover, it is even and positively $1$-homogeneous on $\tilde{E}$.

Let $G=F^{\ast}$ and let $\|\cdot\|_{\ast}$ be its norm.
Let $f\in H$ and $f^{\ast}$ be the linear functional on $H$ associated to $f$.
We say that $f^{\ast}\circ \Lambda$ belongs to $G$ if the functional $F\cap X \ni u\mapsto \langle f,\Lambda(u)\rangle_{H}$ is bounded with respect to the $F$-norm on $u$, that is, there exists $C\geq 0$ such that
$$\langle f,\Lambda(u)\rangle_{H}\leq C\|u\|_F\quad\text{for any }u\in F\cap X.$$
We call
\begin{equation}\label{starnorm}
\|f^{\ast}\circ \Lambda\|_{\ast}=\sup\{\langle f,\Lambda(u)\rangle_{H}:\ u\in F\cap X\text{ with }\|u\|_F=1\}.
\end{equation}
Then, by Hahn-Banach, $f^{\ast}\circ \Lambda|_{F\cap X}$ can be extended to an element of $G$ with the same $\|\cdot\|_{\ast}$-norm.

Using Meyer's arguments in \cite{YMey}, the following well-known result can be easily obtained.  

\begin{prop}\label{charprop}
Let $f\in H$ and let $u_0$ be a solution to \eqref{minpbm}.
Then we have the following characterizations.
\begin{enumerate}[a\textnormal{)}]
\item $u_0$ is a minimizer if and only if 
\begin{equation}\label{char}
\|v_0^{\ast}\circ\Lambda\|_{\ast}\leq \dfrac1{2\lambda_0}\quad\text{and}\quad\langle v_0,\Lambda(u_0)\rangle_{H}=\dfrac1{2\lambda_0}\|u_0\|_F,\end{equation}
where $v_0=f-\Lambda(u_0)$ and $v_0^{\ast}$ is the linear functional on $H$ associated to $v_0$.
\item $u_0=0$ if and only if $f^{\ast}\circ \Lambda\in G$ and $\|f^{\ast}\circ \Lambda\|_{\ast}\leq \dfrac1{2\lambda_0}.$
\item If $\|f^{\ast}\circ \Lambda\|_{\ast}> \dfrac1{2\lambda_0}$, including when $f\circ\Lambda\notin G$, then \eqref{char} may be replaced by
\begin{equation}\label{char2}
\|v_0^{\ast}\circ\Lambda\|_{\ast}=\dfrac1{2\lambda_0}\quad\text{and}\quad\langle v_0,\Lambda(u_0)\rangle_{H}=\dfrac1{2\lambda_0}\|u_0\|_F>0.\end{equation}
\end{enumerate}
\end{prop}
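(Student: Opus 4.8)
The plan is to read off all three statements from the first-order optimality condition \eqref{charmin}, combined with the Parseval-type identity \eqref{firstproperty} and one application of Hahn--Banach, following Meyer's treatment of the ROF functional in \cite{YMey}. Throughout one uses that any solution $u_0$ of \eqref{minpbm} necessarily has $R(u_0)<+\infty$, hence $u_0\in F\cap X=\tilde E$, that $\tilde E$ is a linear subspace on which $R$ is a norm (in particular even and positively $1$-homogeneous), so that both \eqref{charmin} and \eqref{firstproperty} apply, and that for $h\in F\cap X$ one has $\langle v_0^{\ast}\circ\Lambda,h\rangle=\langle v_0,\Lambda(h)\rangle_H$.

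\textbf{Part a).} For necessity, take $h\in F\cap X$ and $\varepsilon>0$ in \eqref{charmin}, expand $\|v_0-\varepsilon\Lambda(h)\|_H^2$, divide by $\varepsilon$ and rearrange to get
\[
2\lambda_0\langle v_0,\Lambda(h)\rangle_H\le \varepsilon\lambda_0\|\Lambda(h)\|_H^2+\frac{\|u_0+\varepsilon h\|_F-\|u_0\|_F}{\varepsilon}\le \varepsilon\lambda_0\|\Lambda(h)\|_H^2+\|h\|_F,
\]
the last step being the triangle inequality for $\|\cdot\|_F$. Letting $\varepsilon\to 0^+$ and then replacing $h$ by $-h$ yields $|\langle v_0,\Lambda(h)\rangle_H|\le \frac1{2\lambda_0}\|h\|_F$ for all $h\in F\cap X$, i.e.\ $v_0^{\ast}\circ\Lambda\in G$ with $\|v_0^{\ast}\circ\Lambda\|_{\ast}\le\frac1{2\lambda_0}$; the remaining equality $\langle v_0,\Lambda(u_0)\rangle_H=\frac1{2\lambda_0}\|u_0\|_F$ is precisely the content of \eqref{firstproperty} (with $R(u_0)=\|u_0\|_F$). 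For sufficiency, assume \eqref{char} and extend $v_0^{\ast}\circ\Lambda|_{F\cap X}$ by Hahn--Banach to $g\in G$ with $\|g\|_{\ast}=\|v_0^{\ast}\circ\Lambda\|_{\ast}\le\frac1{2\lambda_0}$. Then $\|2\lambda_0 g\|_{\ast}\le 1$ and $\langle 2\lambda_0 g,u_0\rangle=2\lambda_0\langle v_0,\Lambda(u_0)\rangle_H=\|u_0\|_F$, so $2\lambda_0 g$ is a subgradient of $\|\cdot\|_F$ at $u_0$, whence $\|u_0+h\|_F\ge \|u_0\|_F+2\lambda_0\langle v_0,\Lambda(h)\rangle_H$ for every $h\in F\cap X$. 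Substituting this bound into the expansion of $\lambda_0\|f-\Lambda(u_0+h)\|_H^2+\|u_0+h\|_F$ cancels the cross term and leaves $\lambda_0\|v_0\|_H^2+\|u_0\|_F+\lambda_0\|\Lambda(h)\|_H^2\ge \lambda_0\|v_0\|_H^2+\|u_0\|_F$, so $u_0$ is a minimizer.

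\textbf{Parts b) and c).} Part b) is a) applied to the candidate $u_0=0$, for which $v_0=f$: the second relation in \eqref{char} is then automatic, the first becomes $f^{\ast}\circ\Lambda\in G$ and $\|f^{\ast}\circ\Lambda\|_{\ast}\le\frac1{2\lambda_0}$, and conversely these conditions make $0$ satisfy \eqref{char}; that $0$ is then the unique solution follows from Remark~\ref{uniquenessremark}, since every solution $u$ would satisfy $\|u\|_F=R(u)=R(0)=0$. For c), the contrapositive of b) gives $u_0\neq 0$, hence $\|u_0\|_F>0$, so the second relation of \eqref{char} reads $\langle v_0,\Lambda(u_0)\rangle_H=\frac1{2\lambda_0}\|u_0\|_F>0$; testing \eqref{starnorm} with $u_0/\|u_0\|_F$ gives $\|v_0^{\ast}\circ\Lambda\|_{\ast}\ge\frac1{2\lambda_0}$, which together with the first relation of \eqref{char} forces equality and yields \eqref{char2}.

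\textbf{Main obstacle.} There is no deep difficulty, the result being classical, but the point requiring care is the distinction between $F\cap X$ and $F$: the functional $v_0^{\ast}\circ\Lambda$ is a priori defined only on $F\cap X$, so Hahn--Banach is genuinely needed to produce the element $g\in G=F^{\ast}$ used in the sufficiency part, and one must keep in mind that the admissible perturbations in \eqref{charmin} range precisely over $\tilde E=F\cap X$. The rest is the routine expansion of the quadratic term together with standard norm duality.
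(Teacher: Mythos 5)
Your proof is correct, and it is precisely the Meyer-style duality argument that the paper invokes without writing out (the paper only remarks that the result "can be easily obtained" using Meyer's arguments from \cite{YMey}): the variational inequality \eqref{charmin} plus homogeneity for necessity, Hahn--Banach and the subgradient inequality for the norm for sufficiency, and parts b), c) as direct corollaries. You also correctly handle the two points that need care — that competitors with $R<+\infty$ lie in the linear subspace $F\cap X$, and that $v_0^{\ast}\circ\Lambda$ must be extended from $F\cap X$ to all of $F$ before being used as a subgradient — so nothing is missing.
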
 

\subsection{Applications to the classical ($BV$-$L^2$) decompositions}\label{BVL2sec}

In the classical ($BV$-$L^2$) decomposition we have $X=E=H=L^2(\Omega)$, $\Lambda$ is just the identity map,  $\alpha=2$ and $\beta=1$. The set $\Omega$ is either $\R^N$, $N\geq 1$,
or a bounded domain with Lipschitz boundary contained in $\mathbb{R}^N$, $N\geq 1$. About $R$, this is a $BV$-related norm or seminorm, typically it is the total variation.

For $\Omega=\mathbb{R}^N$, $N\geq 1$,
the homogeneous $BV$ space on  $\mathbb{R}^N$, $N\geq 1$, is defined as follows.
We say that $u\in BV(\mathbb{R}^N)$ if $Du$, its gradient in the distributional sense, is a bounded vector valued Radon measure on $\mathbb{R}^N$ and $u$ satisfies a suitable condition at infinity. Namely,
if $N=1$, then $BV(\mathbb{R})\subset L^{\infty}(\mathbb{R})$, with continuous immersion, and the condition at infinity here
is that (a good representative of) $u\in BV(\mathbb{R})$ satisfies $\displaystyle{\lim_{t\to- \infty} u(t)=0}$.
If $N\geq 2$, we require that $u\in BV(\mathbb{R}^N)$ vanishes at infinity in a weak sense. We note that
$BV(\mathbb{R}^N)\subset L^{N/(N-1)}(\mathbb{R}^N)$, with continuous immersion, and $u$ belonging to
$L^{N/(N-1)}(\mathbb{R}^N)$ already guarantees that $u$ vanishes at infinity in a weak sense. Finally, we endow the homogeneous
$BV(\mathbb{R}^N)$ space with the norm given by the total variation of $Du$, namely
$$\|u\|_{BV(\mathbb{R}^N)}=|Du|(\mathbb{R}^N)=TV(u).$$
We refer to Section~1.12 in Meyer's book \cite{YMey} for further details.

About bounded domains, let us consider $\Omega\subset\R^N$, $N\geq 1$, to be a bounded domain with Lipschitz boundary.
$BV(\Omega)$ is the set of $L^1(\Omega)$ functions such that $Du$ is a bounded vector valued Radon measure on $\Omega$. Its norm is given by
$$\|u\|_{BV(\Omega)}=\|u\|_{L^1(\Omega)}+|u|_{BV(\Omega)}$$
where the seminorm $|u|_{BV(\Omega)}$ is just the total variation of $u$, that is, $|Du|(\Omega)=TV(u)$.
We recall that $BV(\Omega)$ is continuously contained in $L^{N/(N-1)}(\Omega)$ and that it is compactly contained in $L^1(\Omega)$.

Let
$$L^1_{\ast}(\Omega)=\left\{u\in L^1(\Omega):\ \int_{\Omega}u=0\right\}\text{ and }
BV_{\ast}(\Omega)=\left\{u\in BV(\Omega):\ \int_{\Omega}u=0\right\}.$$
Then we note that
$$\|u\|_{BV_{\ast}(\Omega)}=|u|_{BV(\Omega)}$$
is a norm on $BV_{\ast}(\Omega)$ which is topologically equivalent to the usual $BV$-norm.

Overall, we distinguish four different cases, in all of them $f\in H$.
\begin{enumerate}[i)]
\item\label{caso1} $F=BV(\R^N)$, $N\geq 1$, $H=L^2(\R^N)$, $R(u)=\|u\|_{BV(\R^N)}=|Du|(\R^N)$.
\item\label{caso2} Let $\Omega$ be a bounded domain of $\R^N$, $N\geq 1$, with Lipschitz boundary. Then $F=BV(\Omega)$,
$H=L^2(\Omega)$, $R(u)=\|u\|_{BV(\Omega)}$.
\item\label{caso3} Let $\Omega$ be a bounded domain of $\R^N$, $N\geq 1$, with Lipschitz boundary. Then $F=BV(\Omega)$,
$H=L^2(\Omega)$, $R(u)=|u|_{BV(\Omega)}=|Du|(\Omega)$.
\item\label{caso4} Let $\Omega$ be a bounded domain of $\R^N$, $N\geq 1$, with Lipschitz boundary. Then $F=BV_{\ast}(\Omega)$, 
$H=L^2_{\ast}(\Omega)$, $R(u)=\|u\|_{BV_{\ast}(\Omega)}=|u|_{BV(\Omega)}=|Du|(\Omega)$.
\end{enumerate}

In any of these cases, \eqref{structcondition} and \eqref{densecondition} hold. Morevorer,
existence, and uniqueness, of a solution to \eqref{minpbm} is guaranteed, hence Assumption~\ref{existenceassum} also holds.
Therefore, Theorem~\ref{1thm} is true and all the results of Section~\ref{Parssec} applies. In particular,  see for more details Theorem 3.2, Theorem 3.3 and Remark 3.4 in \cite{M-N-R}, in all these cases $v_n\to 0$ in $H$, that is, we can apply Proposition~\ref{multprop} with $\delta_0=0$ and $\tilde{v}=0$ and the multiscale decomposition and the Parseval-like identity of \eqref{multiscale+pars} hold true. We note that in \cite{M-N-R} the Parseval-like identity was stated only for $N=2$ but actually holds in any dimension $N\geq 1$.

\begin{oss}\label{normvsseminorm} We compare cases \ref{caso3}) and \ref{caso4}).
Let $f\in L^2(\Omega)$ and let us consider case \ref{caso3}), that is,
\begin{equation}\label{ROF}
\min\left\{\lambda_0\|f-u\|_{L^2(\Omega)}^2+|u|_{BV(\Omega)}:\ u\in BV(\Omega)\cap L^2(\Omega)  \right\}.
\end{equation}
Then the minimizer $u_0$ satisfies
$$\frac1{|\Omega|}\int_\Omega u_0=\frac1{|\Omega|}\int_\Omega f.$$

Hence if $f\in  L^2_{\ast}(\Omega)$, the two minimization problems, \eqref{ROF} and the one of case \ref{caso4}), namely
\begin{equation}\label{ROF2}
\min\left\{\lambda_0\|f-u\|_{L^2(\Omega)}^2+\|u\|_{BV_{\ast}(\Omega)}: u\in BV_{\ast}(\Omega)\cap L^2(\Omega)  \right\},
\end{equation}
are perfectly equivalent.

Moreover, for any $f\in L^2(\Omega)$, the minimizer $u_0$ of \eqref{ROF} is such that
$$\displaystyle{u_0=\frac1{|\Omega|}\int_{\Omega} f+\tilde{u}_0}$$
where $\tilde{u}_0\in BV_{\ast}(\Omega)$ solves \eqref{ROF2} with $f$ replaced by $\tilde{f}=\displaystyle{f-\frac1{|\Omega|}\int_{\Omega}f}  \in L^2_{\ast}(\Omega)$. Finally,
$v_0=f_0-u_0=\tilde{f}_0-\tilde{u}_0\in L^2_{\ast}(\Omega)$. We conclude that, up to removing the constant given by the mean of $f$ on $\Omega$, case \ref{caso3}) can be reduced to case \ref{caso4}), where the main advantage is that the penalization term is actually a norm. Moreover, in the multiscale procedure such an issue appears in the first step only, since $v_0\in L^2_{\ast}(\Omega)$, thus $v_n\in L^2_{\ast}(\Omega)$ for any $n\in\mathbb{N}$.
\end{oss}

Finally, in all cases the results of Subsection~\ref{mincarsec} are valid. In fact, for cases \ref{caso1}), \ref{caso2}) and \ref{caso4}), $R$ is indeed the norm of a Banach space. About case \ref{caso3}), this can be reduced to case \ref{caso4}) by Remark~\ref{normvsseminorm}.

\section{Regularization by Hilbert norms}\label{Hilbertregnormsec}
We consider the convex case. Namely,
we assume that $E$ is convex and that $E\ni\sigma\to d(\tilde{\Lambda},\Lambda(\sigma))$ is a continuous convex function.
Let $R:X\to [0,+\infty]$ be a convex function on $X$ satisfying the assumptions of Theorem~\ref{1thm}.
We assume that $\alpha\geq 1$ and $\beta\geq 1$.

We further assume that $R$ satisfies Assumption~\ref{strictconvexity}.
Let Assumption~\ref{stronger} be satisfied and let $\hat{\sigma}\in E$ be a solution to \eqref{minimal00}.
Let us call
\begin{equation}\label{Sdef}
S=\left\{\tilde{\sigma}\in E:\ R(\tilde{\sigma})<+\infty\text{ and }\ d(\tilde{\Lambda},\Lambda(\tilde{\sigma}))=\delta_0=\min_{\sigma\in X}d(\tilde{\Lambda},\Lambda(\sigma))\right\}.
\end{equation}
Note that $S$ is a convex set and, by Assumption~\ref{strictconvexity} on $R$, $\hat{\sigma}$ solving \eqref{minimal00} is unique,
that is, $S_{opt}=\{\hat{\sigma}\}$.

We call $\sigma_{-1}=0$. 
Clearly $\sigma_0=u_0\in E$ solution to \eqref{regularizedpbm} satisfies
 $R(u_0)\leq R(\hat{\sigma})\leq R(\tilde{\sigma})$ for any $\tilde{\sigma}\in S$.
 If $\sigma_0=u_0=\hat{\sigma}$, then $\sigma_n=\hat{\sigma}$ for any $n\geq 1$. For any $n\geq 0$, if $\sigma_{n-1}\in S$,
 then $\sigma_m=\sigma_{n-1}$ for any $m\geq n$. Let $n\geq 0$ and
let $u_n$ be the solution to \eqref{inductiveconstr} and $\sigma_n=\sigma_{n-1}+u_n$.
We consider the segment connecting $\sigma_n$ to $\tilde{\sigma}\in S$, that is,
$$l=\{x(t)=\sigma_n+t(\tilde{\sigma}-\sigma_n),\ t\in [0,1]\}$$
and we claim that
\begin{equation}\label{exteriorsegment}
R(x-\sigma_{n-1})>R(u_n)\quad\text{for any }x\in l\backslash\{\sigma_{n}\}.
\end{equation}
Let us assume that $l\backslash\{\sigma_{n}\}$ is not empty.
Then, by contradiction, assume that $R(x(t_0)-\sigma_{n-1})\leq R(u_n)$ for some $0<t_0\leq 1$.
By convexity, we have
\begin{equation}\label{hereconvexityused}
\big(d(\tilde{\Lambda},\Lambda(x(t))\big)^{\alpha}\leq \big(d(\tilde{\Lambda},\Lambda(\sigma_n)\big)^{\alpha}\quad\text{for any }0<t\leq 1
\end{equation}
and by Assumption~\ref{strictconvexity} on $R$
$$(R(x(t)-\sigma_{n-1}))^{\beta}< (R(u_n))^{\beta}\quad\text{for any }0<t<t_0,$$
thus contradicting the minimality of $u_n$.

\begin{oss}\label{convexneededoss}
To prove \eqref{exteriorsegment}, we have used the convexity of $E\ni\sigma\to d(\tilde{\Lambda},\Lambda(\sigma))$ only to show \eqref{hereconvexityused}. Indeed, convexity implies that, being $\tilde{\sigma}$ a minimizer, the function $[0,1]\ni t\mapsto d(\tilde{\Lambda},\Lambda(x(t))$ is decreasing with respect to $t$.
\end{oss}

\subsection{Convergence results}\label{convergressec}
We show that the simple observation contained in \eqref{exteriorsegment} has a very important consequence when the regularization $R$ is chosen to be the norm of a Hilbert space,
that is, when Assumption~\ref{hilbertassum} is satisfied.

In fact, under Assumption~\ref{hilbertassum} and assuming that
\eqref{compactnesscondition} and \eqref{lsccondition} hold, we have for any $\tilde{\sigma}\in S$
\begin{equation}\label{decreasing}
\|\tilde{\sigma}-\sigma_{n}\|_{H_1}\leq \|\tilde{\sigma}-\sigma_{n-1}\|_{H_1}\quad\text{for any }n\geq 0.
\end{equation}
Property \eqref{decreasing} follows from the property \eqref{exteriorsegment}. In fact, assuming that $\sigma_n$ is different from $\tilde{\sigma}$,
for any $0<t\leq 1$
$$\|x(t)-\sigma_{n-1}\|_{H_1}^2=\|\sigma_n+t(\tilde{\sigma}-\sigma_n)-\sigma_{n-1}\|_{H_1}^2>\|\sigma_n-\sigma_{n-1}\|_{H_1}^2,$$
that is,
$$t^2\|\tilde{\sigma}-\sigma_n\|_{H_1}^2+2t\langle \sigma_n-\sigma_{n-1},\tilde{\sigma}-\sigma_n\rangle_{H_1}>0\quad\text{for any }0<t\leq 1,$$
which implies
\begin{equation}\label{eq1}
\langle \sigma_n-\sigma_{n-1},\tilde{\sigma}-\sigma_n\rangle_{H_1}\geq 0.
\end{equation}
We conclude that
\begin{multline}\label{eq2}
\|\tilde{\sigma}-\sigma_{n-1}\|_{H_1}^2\\=
\|\tilde{\sigma}-\sigma_n\|_{H_1}^2+\|\sigma_n-\sigma_{n-1}\|_{H_1}^2+2\langle \sigma_n-\sigma_{n-1},\tilde{\sigma}-\sigma_n\rangle_{H_1}\geq \|\tilde{\sigma}-\sigma_n\|_{H_1}^2
\end{multline}
and \eqref{decreasing} is proved.

\begin{teo}\label{hilbertregthm}
Let $E\subset X$ be convex and let $E\ni\sigma\to d(\tilde{\Lambda},\Lambda(\sigma))$ be a continuous convex function.
Let us assume that $R$ is as in Assumption~\ref{hilbertassum} and that it satisfies
\eqref{compactnesscondition} and \eqref{lsccondition}. Let 
Assumption~\ref{stronger} be satisfied. Let $\alpha\geq 1$ and $\beta\geq 1$.

Then, calling $\sigma_{-1}=0$, for any $n\geq 0$ we have
\begin{equation}\label{result1}
\mathrm{dist}_{H_1}(\sigma_n,S)\leq \mathrm{dist}_{H_1}(\sigma_{n-1},S)\leq \|\hat{\sigma}\|_{H_1}.
\end{equation}
Furthermore,
 if $\displaystyle \lim_n\lambda_n=+\infty$, we have
\begin{equation}\label{result2}
\lim_n d(\tilde{\Lambda},\Lambda(\sigma_n))=\delta_0=\min_{\sigma\in X}d(\tilde{\Lambda},\Lambda(\sigma)),
\end{equation}
and 
\begin{equation}\label{result3}
\lim_n \mathrm{dist}_X(\sigma_n,S)=0.
\end{equation}
Finally, for any subsequence
$\{\sigma_{n_k}\}_{k\in\mathbb{N}}$
there exists a subsequence $\{\sigma_{n_{k_j}}\}_{j\in\mathbb{N}}$ converging to $\sigma_{\infty}\in S$ as $j\to+\infty$.
\end{teo}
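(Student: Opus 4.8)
The plan is to exploit the nonexpansivity estimate \eqref{decreasing}, which has already been established, and feed it into the convergence machinery of Theorem~\ref{1thm}. First I would observe that \eqref{decreasing} holds for every fixed $\tilde{\sigma}\in S$, hence taking the infimum over $\tilde{\sigma}\in S$ gives $\mathrm{dist}_{H_1}(\sigma_n,S)\le\mathrm{dist}_{H_1}(\sigma_{n-1},S)$; since $\sigma_{-1}=0$ and $\hat{\sigma}\in S$ with $\mathrm{dist}_{H_1}(0,S)\le\|\hat\sigma\|_{H_1}$, iterating yields \eqref{result1}. A side consequence is that the sequence $\{\|\sigma_n\|_{H_1}\}$ stays bounded, say by $\|\hat\sigma\|_{H_1}+\mathrm{dist}_{H_1}(0,\hat\sigma)$, so all the $\sigma_n$ lie in a fixed $H_1$-ball; combined with \eqref{compactnesscondition} (note $R(\sigma_n)=\|\sigma_n\|_{H_1}$ is bounded) this gives relative compactness of $\{\sigma_n\}$ in $X$.

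\textbf{Convergence in the measurements space.} For \eqref{result2} I would invoke Theorem~\ref{1thm}: its hypotheses \eqref{structcondition}, \eqref{densecondition}, and Assumption~\ref{existenceassum} are all in force under Assumption~\ref{hilbertassum} together with \eqref{compactnesscondition}, \eqref{lsccondition} (here the Remark after Assumption~\ref{stronger} lets us drop \eqref{densecondition}), so convergence $\varepsilon_0=\delta_0$ follows \emph{provided} $\limsup_n 2^{\beta n}/\lambda_n<+\infty$. The subtlety is that we only assume $\lambda_n\to+\infty$, which is weaker. So I expect the real work is to upgrade the argument behind Theorem~\ref{1thm} using the extra Hilbert structure. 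Concretely: from \eqref{exteriorsegment} evaluated along the segment to $\hat\sigma$, the minimality of $u_n$ at step $n$ gives, comparing with the competitor $x(t)-\sigma_{n-1}$ for small $t$, a bound of the form $\lambda_n\big(d(\tilde\Lambda,\Lambda(\sigma_n))^\alpha-\delta_0^\alpha\big)\le (R(x(t)-\sigma_{n-1}))^\beta-(R(u_n))^\beta$; letting $t\to0$ and using the quadratic expansion of $\|x(t)-\sigma_{n-1}\|_{H_1}^2$ together with \eqref{eq1} produces a telescoping estimate. Indeed summing the inequalities $\|\hat\sigma-\sigma_{n-1}\|_{H_1}^2-\|\hat\sigma-\sigma_n\|_{H_1}^2\ge\|\sigma_n-\sigma_{n-1}\|_{H_1}^2=\|u_n\|_{H_1}^2$ from \eqref{eq2} shows $\sum_n\|u_n\|_{H_1}^2<+\infty$, hence $R(u_n)\to0$. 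Plugging $R(u_n)\to 0$ into the minimality inequality $\lambda_n d(\tilde\Lambda,\Lambda(\sigma_n))^\alpha\le\lambda_n d(\tilde\Lambda,\Lambda(\sigma_{n-1}+v))^\alpha+(R(v))^\beta$ with $v$ chosen so that $\sigma_{n-1}+v$ is close to a minimizer — e.g. $v=t(\hat\sigma-\sigma_{n-1})$ — and using $\lambda_n\to+\infty$, forces $d(\tilde\Lambda,\Lambda(\sigma_n))^\alpha\to\delta_0^\alpha$, which is \eqref{result2}. This is the step I anticipate being the main obstacle: making the optimization of $t$ against $\lambda_n\to\infty$ quantitative enough to conclude, while only using convexity of $\sigma\mapsto d(\tilde\Lambda,\Lambda(\sigma))$ and not linearity.

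\textbf{Convergence in the unknowns space.} Once \eqref{result2} holds, I would extract a subsequence: by the relative compactness noted above, any subsequence $\{\sigma_{n_k}\}$ has a sub-subsequence $\sigma_{n_{k_j}}\to\sigma_\infty$ in $X$. By continuity of $\sigma\mapsto d(\tilde\Lambda,\Lambda(\sigma))$ and \eqref{result2}, $d(\tilde\Lambda,\Lambda(\sigma_\infty))=\delta_0$, so $\sigma_\infty$ solves \eqref{minexist}; by lower semicontinuity \eqref{lsccondition}, $R(\sigma_\infty)\le\liminf_j\|\sigma_{n_{k_j}}\|_{H_1}\le\|\hat\sigma\|_{H_1}+\mathrm{dist}_{H_1}(0,S)<+\infty$, so $\sigma_\infty\in S$. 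To get \eqref{result3} — convergence of $\mathrm{dist}_X(\sigma_n,S)$ to $0$, not just along a subsequence — I would argue by contradiction: if $\mathrm{dist}_X(\sigma_{n_k},S)\ge\eta>0$ along some subsequence, the compactness argument yields a further subsequence converging to some $\sigma_\infty\in S$, contradicting $\mathrm{dist}_X(\sigma_{n_{k_j}},S)\ge\eta$. The final sentence of the theorem is then exactly the subsequence statement just proved. One clean point worth recording separately: since $\mathrm{dist}_{H_1}(\sigma_n,S)$ is nonincreasing and $S_{opt}=\{\hat\sigma\}$, if in fact $\mathrm{dist}_{H_1}(\sigma_n,S)\to0$ then the whole sequence converges to $\hat\sigma$ in $H_1$ — but I would only claim this reinforced conclusion in the linear case (Corollary~\ref{linearcase}), keeping the theorem's statement as is.
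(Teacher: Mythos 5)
Your proposal is correct and follows essentially the same route as the paper: \eqref{result1} from \eqref{decreasing}, then for \eqref{result2} the single comparison $\lambda_n\big(d(\tilde{\Lambda},\Lambda(\sigma_n))\big)^{\alpha}\le \lambda_n\delta_0^{\alpha}+\|\hat{\sigma}-\sigma_{n-1}\|_{H_1}^{\beta}\le \lambda_n\delta_0^{\alpha}+\|\hat{\sigma}\|_{H_1}^{\beta}$ with the admissible competitor $v=\hat{\sigma}-\sigma_{n-1}$, and finally compactness plus lower semicontinuity for the subsequence statement and \eqref{result3}. The ``main obstacle'' you anticipate is not actually there: no optimization over $t$ (nor the summability of $\|u_n\|_{H_1}^2$) is needed, since taking $t=1$ in your own final minimality inequality and dividing by $\lambda_n\to+\infty$ already closes the argument, exactly as in the paper.
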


\begin{oss}
Contrary to the classical multiscale procedure, no control on the order of infinity of the sequence $\lambda_n$ is required. This is essentially due to Assumption~\ref{hilbertassum}.
Moreover, by Assumption~\ref{stronger}, we do not require condition \eqref{densecondition}, that is, that $H_1\cap E$ is dense in $E$.
\end{oss}

\begin{proof} The decreasing property of \eqref{result1} is an immediate consequence of \eqref{decreasing}.

About \eqref{result2}, we follow the proof of \cite[Theorem~2.1]{M-N-R}.
Actually the argument here is much simpler. By contradiction, assume that $\delta_0<\varepsilon_0$.
We have that
$$\delta_0=d(\tilde{\Lambda},\Lambda(\hat{\sigma}))<\varepsilon_0\quad\text{and}\quad\|\hat{\sigma}\|_{H_1}<+\infty.$$
For any $n\geq 0$,
$$
\lambda_n\big(d(\tilde{\Lambda},\Lambda(\sigma_n))\big)^{\alpha}+\|u_n\|_{H_1}^{\beta}\leq
\lambda_n\big(d(\tilde{\Lambda},\Lambda(\hat{\sigma}))\big)^{\alpha}+\|\hat{\sigma}-\sigma_{n-1}\|_{H_1}^{\beta}
$$
hence
$$0<\lambda_n\big(\varepsilon_0^{\alpha}-\delta_0^{\alpha}\big)\leq \|\hat{\sigma}-\sigma_{n-1}\|_{H_1}^{\beta}\leq \|\hat{\sigma}\|_{H_1}^{\beta}.$$
If $\displaystyle \lim_n\lambda_n=+\infty$, we obtain a contradiction and \eqref{result2} is proved.

Before showing \eqref{result3}, we prove the last property. Since $\|\sigma_n\|_{H_1}\leq 2\|\hat{\sigma}\|_{H_1}$ for any $n\geq 0$, by compactness, there exists a subsequence of
$\{\sigma_{n_{k_j}}\}_{j\in\mathbb{N}}$ converging in $X$ to $\sigma_{\infty}$. By \eqref{result2}, we immediately conclude that
$d(\tilde{\Lambda},\Lambda(\sigma_{\infty}))=\delta_0$ and, by lower semicontinuity, $\|\sigma_{\infty}\|_{H_1}\leq 2\|\hat{\sigma}\|_{H_1}$, that is, $\sigma_{\infty}\in S$.
Thus the last property holds true.

About \eqref{result3}, by contradiction let us assume that it is not true. Then there exists $\varepsilon>0$ and a subsequence
$\{\sigma_{n_k}\}_{k\in\mathbb{N}}$ such that $\mathrm{dist}_X(\sigma_{n_k},S)> \varepsilon$ for any $k\in \mathbb{N}$. But this contradicts the last property.\end{proof}

\begin{oss}As it can be seen from the proof and using Remark~\ref{convexneededoss}, in Theorem~\ref{hilbertregthm} we can replace the assumption that $E\ni\sigma\to d(\tilde{\Lambda},\Lambda(\sigma))$ is convex with the following.
 It is enough to assume that for $\hat{\sigma}$ as in Assumption~\ref{stronger} and for any $\sigma\in E$ we have
$$[0,1]\ni t\mapsto d(\tilde{\Lambda},\Lambda(\hat{\sigma}+t(\sigma-\hat{\sigma})))\ \text{is increasing with respect to }t.$$
The only difference in the result is that we need to replace \eqref{result1} with
\begin{equation}\label{result1new}
\|\sigma_n-\hat{\sigma}\|_{H_1}\leq \|\sigma_{n-1}-\hat{\sigma}\|_{H_1}\leq \|\hat{\sigma}\|_{H_1}\quad\text{for any }n\geq 0.
\end{equation}
\end{oss}

\begin{cor}\label{linearcase} Let $E=X$, $Y$ be a strictly convex Banach space and $\Lambda:X\to Y$ be a bounded linear operator.
Let us assume that $R$ is as in Assumption~\ref{hilbertassum}, it satisfies
\eqref{compactnesscondition} and \eqref{lsccondition}, and that $H_1\cap X$ is a closed subspace of $H_1$. In other words, we assume that $H_1\subset X$.
Let Assumption~\ref{stronger} be satisfied and let $\alpha\geq 1$ and $\beta\geq 1$.

Then, calling $\sigma_{-1}=0$, for any $n\geq 0$ we have
\begin{equation}\label{result1bis}
\mathrm{dist}_{H_1}(\sigma_n,S)\leq \mathrm{dist}_{H_1}(\sigma_{n-1},S)\leq \|\hat{\sigma}\|_{H_1}.
\end{equation}
Furthermore,
 if $\displaystyle \lim_n\lambda_n=+\infty$, we have
\begin{equation}\label{result2bis}
\lim_n d(\tilde{\Lambda},\Lambda(\sigma_n))=\delta_0=\min_{\sigma\in X}d(\tilde{\Lambda},\Lambda(\sigma)),
\end{equation}
and 
\begin{equation}\label{result4}
\lim_n \sigma_n=\hat{\sigma}\quad\text{in }X,
\end{equation}
that is,
$$\hat{\sigma}=\sum_{n=0}^{+\infty}u_n\quad\text{in }X.$$
\end{cor}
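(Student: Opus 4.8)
The plan is to build everything on Theorem~\ref{hilbertregthm}. Since the linear case with $Y$ strictly convex is a particular instance of the convex case — the functional $\sigma\mapsto d(\tilde\Lambda,\Lambda(\sigma))=\|\tilde\Lambda-\Lambda(\sigma)\|_Y$ is convex and continuous, $E=X$ is convex, and $R$ is a Hilbert norm regularization satisfying \eqref{compactnesscondition} and \eqref{lsccondition} — Theorem~\ref{hilbertregthm} applies directly and yields \eqref{result1bis} (which is \eqref{result1}), \eqref{result2bis} (which is \eqref{result2} once $\lim_n\lambda_n=+\infty$), the uniform bound $\|\sigma_n\|_{H_1}\le 2\|\hat\sigma\|_{H_1}$, and $X$-subsequential convergence to elements of $S$; moreover the variational inequality \eqref{eq1}, $\langle\sigma_n-\sigma_{n-1},\tilde\sigma-\sigma_n\rangle_{H_1}\ge 0$ for every $\tilde\sigma\in S$ and every $n\ge 0$, is at our disposal. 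Thus the only genuinely new content is \eqref{result4}: the \emph{whole} sequence converges in $X$, and its limit is precisely $\hat\sigma$. I would first record that \eqref{compactnesscondition} forces the embedding $H_1\hookrightarrow X$ to be continuous (indeed compact), so $\Lambda$ restricts to a bounded operator $H_1\to Y$ and $\ker\Lambda\cap H_1$ is a closed subspace of $H_1$.

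The first substantive step is to identify $S$ as a closed affine subspace, $S=\hat\sigma+V$ with $V:=\ker\Lambda\cap H_1$, using the strict convexity of $Y$. Indeed, if $\sigma\in S$, then $\tilde\Lambda-\Lambda(\sigma)$ and $\tilde\Lambda-\Lambda(\hat\sigma)$ both have norm $\delta_0$ while, by convexity together with the definition of $\delta_0$, their midpoint $\tilde\Lambda-\Lambda(\tfrac12(\sigma+\hat\sigma))$ also has norm $\delta_0$; equality in the triangle inequality in a strictly convex space then forces $\Lambda(\sigma)=\Lambda(\hat\sigma)$, i.e. $\sigma-\hat\sigma\in V$, the reverse inclusion being immediate. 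Since by \eqref{minimal00} $\hat\sigma$ has minimal $H_1$-norm over $\{\sigma\in E:\ d(\tilde\Lambda,\Lambda(\sigma))=\delta_0\}$ and the minimum is finite, $\hat\sigma$ is the element of minimal $H_1$-norm of the affine subspace $\hat\sigma+V$, and hence $\hat\sigma\in V^\perp$.

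The second step extracts the orthogonality of the increments. Writing \eqref{eq1} with $\tilde\sigma=\hat\sigma+v$ for arbitrary $v\in V$ gives $\langle u_n,\hat\sigma-\sigma_n\rangle_{H_1}+\langle u_n,v\rangle_{H_1}\ge0$ for all $v\in V$; since $V$ is a linear subspace this is possible only if $\langle u_n,v\rangle_{H_1}=0$ for every $v\in V$, that is, $u_n\in V^\perp$ for each $n$, whence $\sigma_n=\sum_{j=0}^n u_j\in V^\perp$ for every $n$. I would then conclude by combining these facts: by \eqref{compactnesscondition} and $\|\sigma_n\|_{H_1}\le 2\|\hat\sigma\|_{H_1}$ the sequence $\{\sigma_n\}$ is relatively compact in $X$; if $z$ is the $X$-limit of any convergent subsequence, then $d(\tilde\Lambda,\Lambda(z))=\delta_0$ by \eqref{result2bis} and continuity of $\Lambda$, and $\|z\|_{H_1}\le 2\|\hat\sigma\|_{H_1}<+\infty$ by \eqref{lsccondition}, so $z\in S$; passing to a further subsequence converging weakly in $H_1$ and using continuity of $H_1\hookrightarrow X$, its weak $H_1$-limit must coincide with $z$, so $z\in V^\perp$ since each $\sigma_n\in V^\perp$ and $V^\perp$ is weakly closed. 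Then $z-\hat\sigma\in V$ (both points lie in $S$) and $z-\hat\sigma\in V^\perp$ (both lie in $V^\perp$), hence $z-\hat\sigma\in V\cap V^\perp=\{0\}$ and $z=\hat\sigma$. As every $X$-cluster point of $\{\sigma_n\}$ equals $\hat\sigma$ and $\{\sigma_n\}$ is $X$-relatively compact, $\sigma_n\to\hat\sigma$ in $X$, which is \eqref{result4}; the decomposition $\hat\sigma=\sum_{n\ge0}u_n$ in $X$ is merely a restatement.

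The main obstacle is exactly this identification of the limit: Theorem~\ref{hilbertregthm} only delivers $X$-subsequential convergence to \emph{some} element of $S$, and $S$ may be strictly larger than $S_{opt}=\{\hat\sigma\}$. Pinning down the limit rests on the two structural facts above — that $S$ is an \emph{affine} subspace, which is where the strict convexity of $Y$ is essential, and that every increment $u_n$ is $H_1$-orthogonal to the direction space $V$ of $S$, which is where it is essential that the variational inequality \eqref{eq1} holds for \emph{all} $\tilde\sigma\in S$. Once these are in place, the orthogonality $\hat\sigma\in V^\perp$ forces uniqueness of the cluster point through $V\cap V^\perp=\{0\}$, and the rest is routine compactness.
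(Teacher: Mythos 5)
Your proposal is correct and follows essentially the same route as the paper's proof: identify $S=\hat{\sigma}+(\mathrm{Ker}(\Lambda)\cap H_1)$ via strict convexity of $Y$, note $\hat{\sigma}$ is orthogonal to that kernel as the minimal-norm element, use \eqref{eq1} with $\tilde{\sigma}=\hat{\sigma}+\lambda v$ to get $u_n\perp\mathrm{Ker}(\Lambda)\cap H_1$, and conclude by weak $H_1$-compactness that every $X$-cluster point lies in $S\cap(\mathrm{Ker}(\Lambda)\cap H_1)^{\perp}=\{\hat{\sigma}\}$. The only differences are cosmetic (you avoid the paper's case split on $K=\{0\}$ and its inessential induction, and argue directly on cluster points rather than by contradiction).
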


\begin{proof} Clearly \eqref{result1bis} and \eqref{result2bis} hold tue. It remains to prove \eqref{result4}.
First of all, we note that, since $H_1\subset X$ and the immersion is compact, if $v_n\rightharpoonup v$ weakly in $H_1$, then $v_n\to v$ strongly in $X$.
 We investigate the structure of $S$. 
We note that $S$ is a convex set which is closed in $H_1$. 
Moreover, let $\tilde{\sigma}\in S$, with $\tilde{\sigma}\neq \hat{\sigma}$. Then, $\|\tilde{\Lambda}-\Lambda(\sigma)\|_Y=\|\tilde{\Lambda}-\Lambda(\hat{\sigma})\|_Y$ implies, by the strict convexity of $Y$, that $\Lambda(\tilde{\sigma})=\Lambda(\hat{\sigma})$. We conclude that
$$S=\hat{\sigma}+\big(\mathrm{Ker}(\Lambda)\cap H_1\big)=\hat{\sigma}+K$$
where $K$ is a suitable closed subspace of $H_1$. Note that $\|\hat{\sigma}\|\leq \|\tilde{\sigma}\|$ for any
$\tilde{\sigma}\in S$, hence $\hat{\sigma}$ is orthogonal to $K$, since $\hat{\sigma}$ is the projection of $0$ on $S$.

When $K=\{0\}$, that is, when $\Lambda$ is injective on $H_1$, $S=\{\hat{\sigma}\}$ and \eqref{result4} immediately follows from \eqref{result3}. Otherwise, assume that $K$ is not trivial. We claim that, for any $n\geq 0$, $u_n$, hence $\sigma_n$, is orthogonal to $K$ in $H_1$. Since $\sigma_{-1}=0$, it is enough to show by induction that $\sigma_{n-1}$ orthogonal to $K$ implies that $u_n$ is orthogonal to $K$. Let $v\in K\backslash \{0\}$ and let $\tilde{\sigma}=\hat{\sigma}+\lambda v$ for some
constant $\lambda\in\mathbb{R}$. Then, recalling \eqref{eq1}, we have
$$\langle u_n,\lambda v+\hat{\sigma}-\sigma_n \rangle_{H_1}\geq 0\quad \text{for any }\lambda\in \mathbb{R}$$
which implies
$$\langle u_n, v\rangle_{H_1}= 0\quad
\text{and}\quad\langle u_n,\hat{\sigma}-\sigma_n \rangle_{H_1}\geq 0$$
and the required orthogonality is proved.

By contradiction, let us assume that for some subsequence $\{\sigma_{n_k}\}_{k\in\mathbb{N}}$ we have
$\sigma_{n_k}\to \sigma_{\infty}\in S$ in $X$ with $\sigma_{\infty}\neq \hat{\sigma}$, that is, $\sigma_{\infty}=\hat{\sigma}+v$ for some $v\in K\backslash\{0\}$. But, up to a subsequence which we do not relabel,
$\sigma_{n_k}\rightharpoonup \sigma_{\infty}$ weakly in $H_1$. We deduce that $$0=\langle \sigma_{n_k},\tilde{v}\rangle_{H_1}\to
\langle \sigma_{\infty},\tilde{v}\rangle_{H_1}\quad\text{for any }\tilde{v}\in K,$$
hence $\sigma_{\infty}=\hat{\sigma}+v$ is orthogonal to $K$. Since $\hat{\sigma}$ is orthogonal to $K$, we obtain a contradiction if $v\neq 0$.\end{proof}

\subsection{Improvement of the stability of the reconstruction}\label{stabsec}
We show how the stability of the reconstruction for a linear inverse ill-posed problem may be improved if we use the multiscale procedure with a Hilbert norm as regularization.
Let us consider the assumptions of Corollary~\ref{linearcase}.
Let $Y=H$ be a Hilbert space with scalar product $\langle\cdot,\cdot\rangle_H$ and norm $\|\cdot\|_H$. Let $X$ be a Banach space, $E=X$, and let $\Lambda:X\to H$ be a bounded linear operator.
Let us assume that $R$ is as in Assumption~\ref{hilbertassum}, it satisfies
\eqref{compactnesscondition} and \eqref{lsccondition}, and that $H_1\cap X$ is a closed subspace of $H_1$. In other words, we assume that $H_1\subset X$. We consider $\alpha=2$ and $\beta=1$.
Let $\hat{\sigma}\in H_1$ be such that $\tilde{\Lambda}=\Lambda(\hat{\sigma})$, hence
Assumption~\ref{stronger} is satisfied. Moreover, we assume that $\Lambda$ is injective on $H_1$, thus $\hat{\sigma}$ is the only solution to the inverse problem
$$\Lambda(\sigma)=\tilde{\Lambda}$$
under the a priori hypothesis that $\sigma\in H_1$. We note that we are considering exact data without any noise.

Let $\{\lambda_n\}_{n\geq 0}$ be a strictly increasing sequence of positive parameters such that $\displaystyle \lim_n\lambda_n=+\infty$. By Corollary~\ref{linearcase},
we know that the sequence $\sigma_n$ constructed by our multiscale procedure is converging to $\hat{\sigma}$.
We now consider a corresponding single-step regularization. Namely, let
 $\tilde{\sigma}_n$, for any $n\geq 0$, be the solution to
\begin{equation}\label{eq:single_step_procedure}
\min
\left\{\lambda_n\|\tilde{\Lambda}-\Lambda(\sigma)\|_H^2+\|\sigma\|_{H_1}:\ \sigma\in X\right\}.
\end{equation}
We note that $\sigma_n$ and $\tilde{\sigma}_n$ are uniquely identified.
Moreover, we have that for any $n\geq 0$
$$\|\hat{\sigma}-\tilde{\sigma}_n\|_{H_1}\leq \|\hat{\sigma}\|_{H_1}$$
and
$$\|\Lambda(\hat{\sigma}-\tilde{\sigma}_n)\|^2_H\leq \frac{\|\hat{\sigma}\|_{H_1}}{\lambda_n}.$$
We can conclude that also $\tilde{\sigma}_n$ converges to $\hat{\sigma}$ in $X$ as $n\to+\infty$.
On the other hand, we recall that for any $n\geq 0$
\begin{equation}\label{decreasingH_1norm}
\|\hat{\sigma}-\sigma_n\|_{H_1}\leq \|\hat{\sigma}-\sigma_{n-1}\|_{H_1}\leq \|\hat{\sigma}\|_{H_1}
\end{equation}
and
$$\|\Lambda(\hat{\sigma}-\sigma_n)\|^2_H\leq \frac{\|\hat{\sigma}-\sigma_n\|_{H_1}}{\lambda_n}$$
since $\|\hat{\sigma}-\sigma_{n-1}\|_{H_1}\leq \|\hat{\sigma}-\sigma_n\|_{H_1}+\|u_n\|_{H_1}$.

Let us assume that
$$\|\Lambda(\hat{\sigma})\circ\Lambda\|_{\ast}>\frac{1}{2\lambda_0}.$$
Then, for any $n\geq 0$, 
\begin{equation}\label{scale-defin}
\|\Lambda(\hat{\sigma}-\sigma_n)\circ\Lambda\|_{\ast}=\|\Lambda(\hat{\sigma}-\tilde{\sigma}_n)\circ\Lambda\|_{\ast}=\frac{1}{2\lambda_n}.
\end{equation}
We can consider this, roughly speaking, as the resolution or the scale we wish to obtain by our reconstruction method for the inverse problem. Hence the two methods give the same resolution.
However, the multiscale method behaves better with respect to stability.
In fact, let us assume that the following conditional stability estimate holds.
For any $C>0$, there exists a modulus of continuity $\omega_C$ such that if $\|\sigma\|_{H_1}\leq C$, then
$$\|\sigma\|_{X}\leq \omega_{C}(\|\Lambda(\sigma)\|_H).$$
Clearly, $\omega_{C}\leq \omega_{C_1}$ for any $0<C\leq C_1$.
Then, unless we have any further information, we can just compare
\begin{equation}\label{eq:estimate1}
\|\hat{\sigma}-\tilde{\sigma}_n\|_X\leq \omega_{\|\hat{\sigma}\|_{H_1}}\left(\sqrt\frac{\|\hat{\sigma}\|_{H_1}}{\lambda_n}\right)
\end{equation}
with 
\begin{equation}\label{eq:estimate2}
\|\hat{\sigma}-\sigma_n\|_X\leq \omega_{\|\hat{\sigma}-\sigma_n\|_{H_1}}\left(\sqrt\frac{\|\hat{\sigma}-\sigma_n\|_{H_1}}{\lambda_n}\right).
\end{equation}
Recalling \eqref{decreasingH_1norm},
it is evident that the second one is better and that it can potentially improve with $n$, both in the error, besides the factor $\lambda_n$, and in the a priori bound.

\subsection{Image deblurring with Hilbert regularization: numerical illustration}\label{numerics}
In the following, we compare numerically the multiscale decomposition method and the single-step procedure defined by \eqref{eq:single_step_procedure} on an image deblurring test problem. Our goal is to verify that the multiscale approach practically improves the stability of the image reconstruction, as theoretically suggested by the estimates \eqref{eq:estimate1}-\eqref{eq:estimate2} given in the previous section. All the numerical experiments hereby reported have been performed by means of routines implemented in Matlab R2023a.

Let us assume that the unknown image belongs to the space $H^1(\Omega)$, $\Omega=(0,1)^2$, equipped with the usual norm $\|\sigma\|_{H^1(\Omega)}=(\|\sigma\|^2_{L^2(\Omega)}+\|\nabla \sigma\|^2_{L^2(\Omega)})^{\frac{1}{2}}$. Note that this regularization space preserves smoothness but does not allow for edges \cite{Chan-et-al-2006,Mang-et-al-2016}. Furthermore, we let $\Lambda:L^2(\Omega)\rightarrow L^2(\Omega)$ be a continuous linear operator modelling the blurring process on the image. Given an observed image $\tilde{\Lambda}\in L^2(\Omega)$, we aim at solving the linear inverse problem $\Lambda(\hat{\sigma})=\tilde{\Lambda}$, where $\hat{\sigma}\in H^1(\Omega)$ denotes the clean image that needs be restored. Under the assumption that a unique solution $\hat{\sigma}$ to the inverse problem exists, we can conclude that the sequence $\{\sigma_n\}_{n\geq 0}$ generated by the multiscale procedure, as well as the sequence $\{\tilde{\sigma}_n\}_{n\geq 0}$ generated by the single-step procedure, converge to $\hat{\sigma}$ in $L^2(\Omega)$ as $n\rightarrow +\infty$.

We discretize the problem as follows. Let $h=1/N$ be the discretization step and $\hat{\sigma}_h=(\hat{\sigma}((i-1)h,(j-1)h))_{i,j=1}^N$ the (vectorized) matrix in $\R^{N^2}$ representing a discretized version of the clean image $\hat{\sigma}$ in $H^1(\Omega)$. Analogously, $\Lambda_h\in \R^{N^2\times N^2}$ denotes the blurring matrix obtained by discretization of the blurring operator $\Lambda$, and $\tilde{\Lambda}_h\in \R^{N^2}$ represents the discretization of the observed image $\tilde{\Lambda}$. Then, our discrete problem consists in finding $\hat{\sigma}_h$ such that $\Lambda_h\hat{\sigma}_h=\tilde{\Lambda}_h$.

For a discrete image $u=(u_{i,j})_{i,j=1}^N$ in $\R^{N^2}$, we denote the $0$-th and $1$-st order Tikhonov terms as
$$
\|u\|_2^2=\sum_{i,j}^N u_{i,j}^2, \quad \|\nabla u\|_2^2 = \sum_{i,j=1}^{N}(u_{i+1,j}-u_{i,j})^2+(u_{i,j+1}-u_{i,j})^2.
$$     
Given a parameter $\delta>0$, we consider the following discretized and smoothed version of the $H^1$-norm:
\begin{equation*}
R_{h,H^1}(u)=\left(\|u\|_2^2+\|\nabla u\|_2^2+\delta^2\right)^{\frac{1}{2}}.
\end{equation*}
Let $\{\lambda_n\}_{n\in\N}$ be a sequence of positive parameters with $\displaystyle \lim_{n}\lambda_n=+\infty$. Then, the multiscale procedure generates the sequence of iterates $\{{\sigma}_{h,n}\}_{n\in\N}$ in $\R^{N^2}$ as follows
\begin{eqnarray}\label{eq:multiscale-1}
&\sigma_{h,1} =u_{h,1}= \underset{u\in\R^{N^2}}{\operatorname{argmin}} \ \lambda_1\|\tilde{\Lambda}_h-\Lambda_hu\|_2^2 + R_{h,H^1}(u)\\
\label{eq:multiscale-2}
&\begin{cases}
u_{h,n} = \underset{u\in\R^{N^2}}{\operatorname{argmin}} \  \lambda_n\|\tilde{\Lambda}_h-\Lambda_h (u+\sigma_{h,n-1})\|_2^2  + R_{h,H^1}(u)\\
\sigma_{h,n} = \sigma_{h,n-1}+u_{h,n}, \quad n=2,3,\ldots
\end{cases}
\end{eqnarray}
Likewise, the sequence $\{\tilde{\sigma}_{h,n}\}_{n\in \N}$ generated by the single-step procedure is given by
\begin{equation}\label{eq:singlestep}
\tilde{\sigma}_{h,n} = \underset{\sigma\in\R^{N^2}}{\operatorname{argmin}} \  \lambda_n \|\tilde{\Lambda}_h-\Lambda_h \sigma\|_2^2 + R_{h,H^1}(\sigma), \quad n=1,2,\ldots
\end{equation}
\begin{center}
\begin{figure}[t!]
\begin{tabular}{cc}
\includegraphics[scale = 0.6]{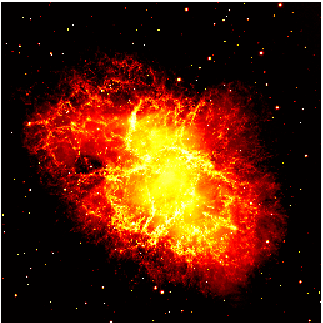}&\includegraphics[scale = 0.6]{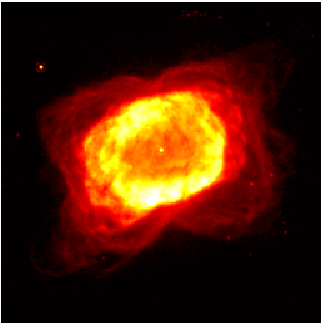}  
\end{tabular}
\caption{Test images. Crab Nebula NGC 1952 (left) and planetary Nebula NGC 7027 (right).}
\label{fig:0}
\end{figure}
\end{center}

We consider two Hubble Space Telescope $256\times 256$ grayscale images, representing the Crab Nebula NGC 1952 and the planetary Nebula NGC 7027, respectively, see Figure \ref{fig:0}. These images have been previously employed in several astronomical image deconvolution tests \cite{Bertero-et-al-2018,Bonettini-et-al-2018,Prato-et-al-2013,Stagliano-et-al-2011}. We note that the $H^1$ regularization is well-suited for these images, as they are smooth objects whose borders fade smoothly into the background.

Two different tests are set up. In the first test, we aim at restoring the artificially blurred images of the two nebulas, assuming that no noise is present in the data. We simulate numerically the blurred images by convolving each test image with a Gaussian kernel of size $9\times 9$ and standard deviation $2$. The top right corner of the resulting blurred images are reported in Figure \ref{fig:1} (first column). In the second test, we also corrupt the blurred images by adding white Gaussian noise with variance $0.01$. The top right corner of the resulting blurred and noisy images are shown in Figure \ref{fig:3} (first column).  

 In the noiseless tests, we set $\lambda_n = 2^{n-5}$, $n=1,\ldots,40$, as the sequence of regularization parameters in both the multiscale procedure \eqref{eq:multiscale-1}-\eqref{eq:multiscale-2} and the single-step procedure \eqref{eq:singlestep}; likewise, in the noisy tests, we choose $\lambda_n = 2^{n-5}$, $n=1,\ldots,20$, for both methods. Note that, at each iteration, both procedures require the solution of a minimization subproblem of the form $\min_{\sigma \in \R^{N^2}} \ F_{h,n}(\sigma)$, where $F_{h,n}:\R^{N^2}\rightarrow \R$ is continuously differentiable and convex. We solve these subproblems by means of a gradient method of the form
$$
\begin{cases}
\sigma_{h,n}^{(0)}\in\R^{N^2}\\
\sigma_{h,n}^{(\ell+1)} = \sigma_{h,n}^{(\ell)}-\alpha_{\ell}\lambda_{\ell}\nabla F_{h,n}(\sigma_{h,n}^{(\ell)}),\quad \ell=0,1,\ldots
\end{cases}
$$  
where the steplength $\alpha_{\ell}>0$ belongs to a compact subset of the positive real numbers set and is computed by adaptive alternation the two Barzilai-Borwein rules \cite{Bonettini-et-al-2009}, whereas the linesearch parameter $\lambda_{\ell}\in (0,1]$ is computed by performing an Armijo linesearch along the descent direction.
Note that such a scheme is guaranteed to converge to a minimizer of $F_{h,n}$ in the convex setting for any choice of the initial guess $\sigma_{h,n}^{(0)}$, see e.g. \cite{Bonettini-et-al-2015,Iusem-et-al-2003}. For the multiscale approach, the initial guess of the subproblem solver is chosen as either the observed image $\tilde{\Lambda}_h$ for $n=1$, or the image $\tilde{\Lambda}_h-\Lambda_h \sigma_{h,n-1}$ for each $n\geq 2$. For the single-step procedure, we let $\tilde{\Lambda}_h$ be the initial guess of the subproblem solver for all $n\in\N$. For both procedures, we stop the inner sequence $\{\sigma_{h,n}^{(\ell)}\}_{\ell\geq 0}$ when either a maximum number of $\ell_{\max} = 2\cdot 10^{4}$ iterations is reached or the following stopping criterion is met
\begin{equation*}
\frac{\|F_{h,n}(\sigma_{h,n}^{(\ell+1)})-F_{h,n}(\sigma_{h,n}^{(\ell)})\|}{\|F_{h,n}(\sigma_{h,n}^{(\ell+1)})\|}\leq \tau \quad \text{and} \quad \frac{\|\nabla F_{h,n}(\sigma_{h,n}^{(\ell+1)})\|}{\|\nabla F_{h,n}(\sigma_{h,n}^{(0)})\|}\leq \tau, \quad \tau = 10^{-4}.
\end{equation*}

\begin{center}
\begin{figure}[t!]
\begin{tabular}{ccc}
\includegraphics[scale = 0.6]{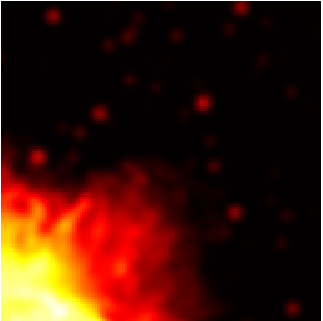}&\includegraphics[scale = 0.6]{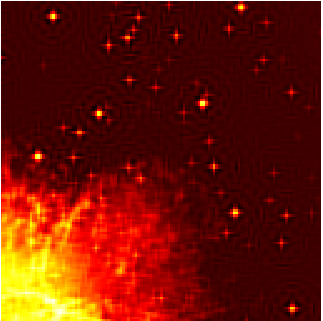} & \includegraphics[scale = 0.6]{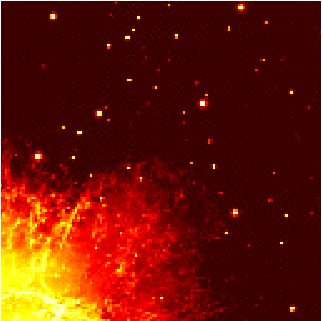} \\
\includegraphics[scale = 0.6]{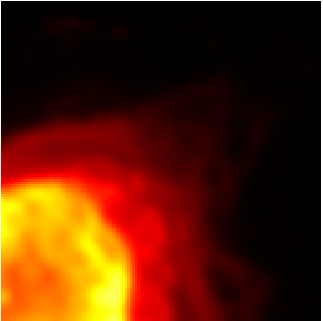}&\includegraphics[scale = 0.6]{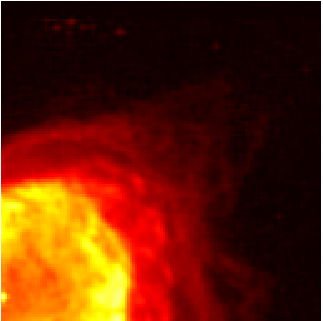} & \includegraphics[scale = 0.6]{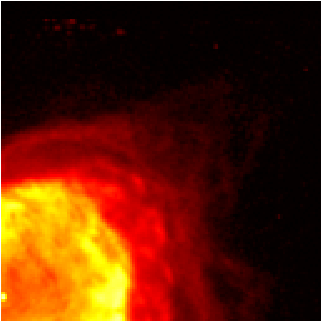} 
\end{tabular}
\caption{Noiseless tests. Zoomed details of the Crab Nebula NGC 1952 (first row) and the planetary Nebula NGC 7027 (second row). From left to right: blurred image, best deblurred image with single-step procedure, and best deblurred image with multiscale procedure.}
\label{fig:1}
\end{figure}
\end{center}

Focusing on the noiseless tests, we show in Figure \ref{fig:1} the top right corner of the deblurred images provided by the single-step regularization method (second column) and the multiscale method (third column), respectively, equipped with the values of the regularization parameter $\lambda_n$ that yield the lowest reconstruction error for both methods. For the Crab Nebula NGC 1952 (first row), the single-step regularization method provides its best deblurred image for $\lambda_{30}=2^{25}$ corresponding to a relative reconstruction error $\|\tilde{\sigma}_{h,n}-\hat{\sigma}_{h}\|/\|\hat{\sigma}_{h}\|$ of $0.140$, whereas the multiscale method provides its best deblurred image at the last iteration, i.e. for $\lambda_{40}=2^{35}$, with a relative reconstruction error $\|\sigma_{h,n}-\hat{\sigma}_{h}\|/\|\hat{\sigma}_{h}\|$ of $0.050$. For the planetary Nebula NGC 7027 (second row), both methods provide their best deblurred image at the last iteration, however the single-step regularization method yields an error of $0.016$, whereas the multiscale method retains an error of $0.004$. Therefore, for both tests images, the multiscale method yields the lowest reconstruction error and provides the best and most detailed image, as it can be seen by looking at the zoomed details in Figure \ref{fig:1}. 

Figure \ref{fig:2} shows the decrease of the relative reconstruction errors of the two methods in the noiseless tests. From these plots, we can see that the single-step regularization method becomes numerically unstable after the first $15-20$ iterations, whereas the multiscale method outperforms significantly the single-step method for most iterations. These results are coherent with the theoretical estimates \eqref{eq:estimate1}-\eqref{eq:estimate2}, which suggested the superiority of the multiscale method in terms of stability of the reconstruction.
\begin{center}
\begin{figure}[t!]
\begin{tabular}{ccc}
\includegraphics[scale = 0.6]{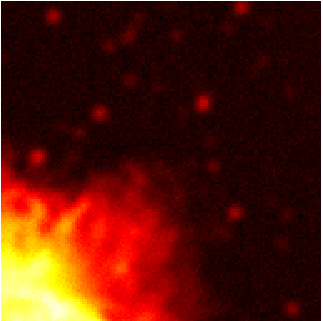}&\includegraphics[scale = 0.6]{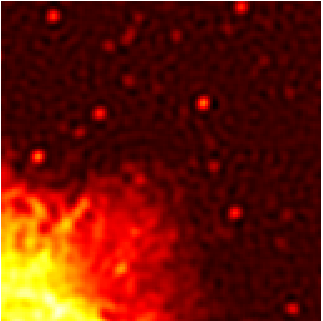} & \includegraphics[scale = 0.6]{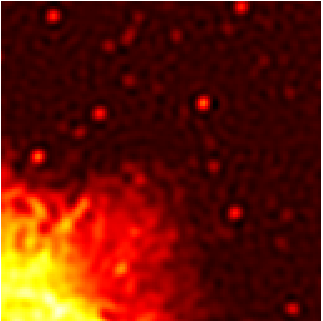} \\
\includegraphics[scale = 0.6]{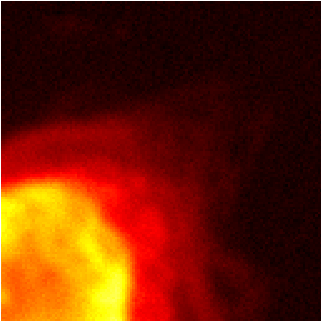}&\includegraphics[scale = 0.6]{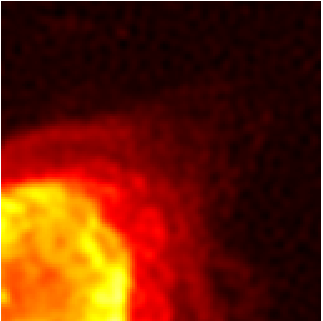} & \includegraphics[scale = 0.6]{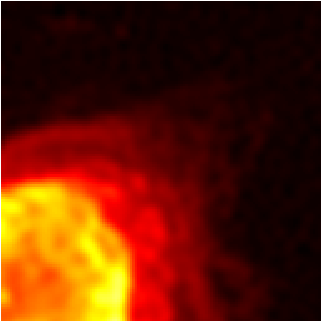} 
\end{tabular}
\caption{Noisy tests. Zoomed details of the Crab Nebula NGC 1952 (first row) and the planetary Nebula NGC 7027 (second row). From left to right: blurred image, best deblurred image with single-step procedure, and best deblurred image with multiscale procedure.}
\label{fig:3}
\end{figure}
\end{center}

Regarding the noisy tests, we report the top right corner of the deblurred images in Figure \ref{fig:3}. For the Crab Nebula NGC 1952, the two methods provides the same minimum value of the reconstruction error ($0.2$), although the multiscale method reaches the minimum three iterations later than the single-step regularization method, i.e. with $\lambda_{9}=2^4$. Analogously, for the planetary Nebula NGC 7027, the multiscale method reaches a slightly smaller value of the reconstruction error ($0.041$ against $0.047$) three iterations later than its competitor, with $\lambda_7 = 2^2$. Visually, the two deblurred images of the Crab Nebula NGC 1952 look comparable, whereas the multiscale method provides a less noisy reconstruction of the second test image. The plots of the relative reconstruction error in Figure \ref{fig:4} suggest that the multiscale method is more robust to the presence of noise, as the minimum values are reached at a later stage, which according
to \eqref{scale-defin} corresponds to a finer scale,
 and are kept for longer iterations than the single-step approach.

\begin{center}
\begin{figure}[t!]
\begin{tabular}{cc}
\includegraphics[scale = 0.4]{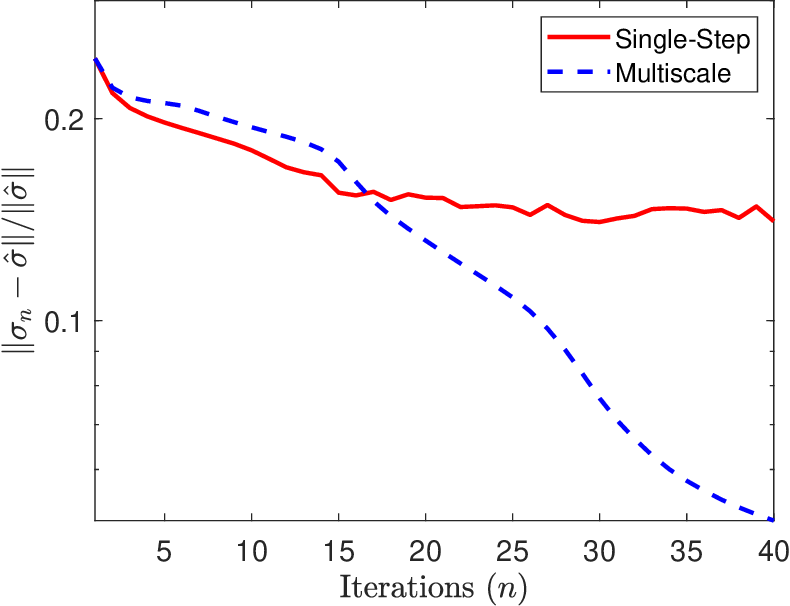}   \includegraphics[scale = 0.4]{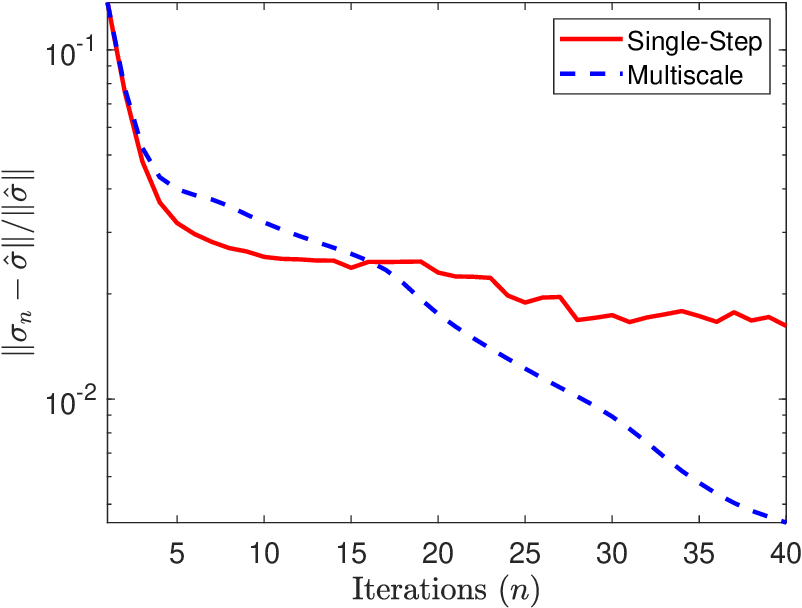}
\end{tabular}
\caption{Noiseless tests. Decrease of the relative error $\|\sigma_{h,n}-\hat{\sigma}_{h}\|/\|\hat{\sigma}_{h}\|$ for the multiscale method (blue, dashed line) and $\|\tilde{\sigma}_{h,n}-\hat{\sigma}_{h}\|/\|\hat{\sigma}_{h}\|$ for the single-step regularization method (red, solid line), with respect to the iteration number. Left: Nebula NGC 1952. Right: Nebula NGC 7027.}
\label{fig:2}
\end{figure}
\end{center}

\begin{center}
\begin{figure}[t!]
\begin{tabular}{cc}
\includegraphics[scale = 0.4]{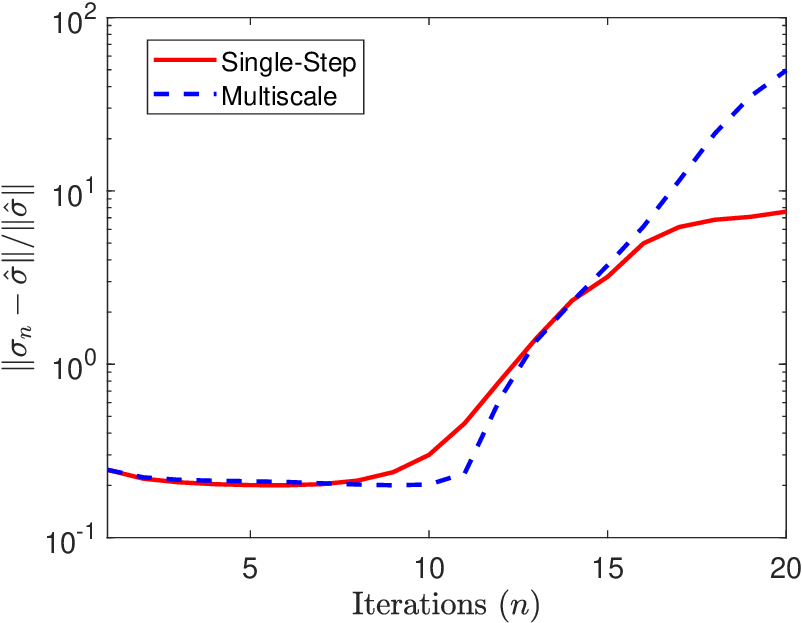}   \includegraphics[scale = 0.4]{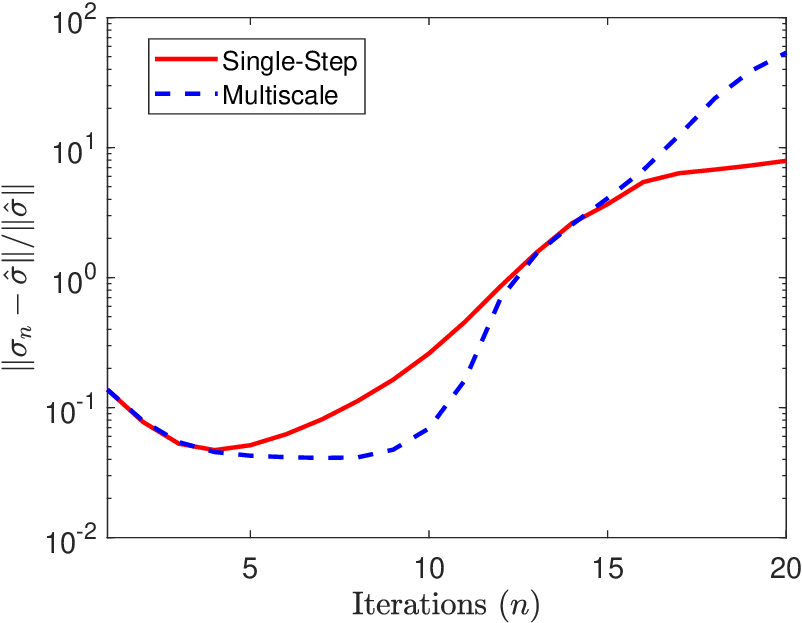}
\end{tabular}
\caption{Noisy tests. Decrease of the relative error $\|\sigma_{h,n}-\hat{\sigma}_{h}\|/\|\hat{\sigma}_{h}\|$ for the multiscale method (blue, dashed line) and $\|\tilde{\sigma}_{h,n}-\hat{\sigma}_{h}\|/\|\hat{\sigma}_{h}\|$ for the single-step regularization method (red, solid line), with respect to the iteration number. Left: Nebula NGC 1952. Right: Nebula NGC 7027.}
\end{figure}
\label{fig:4}
\end{center}

\section{Counterexamples}\label{examplesec}

In this section we construct examples showing that the tighter multiscale version can be needed even in the linear case. Overall we present three counterexamples, the first two in a discrete settings, namely for sequences spaces, the third in a continuous setting for the reconstruction of a deformed signal.

\subsection{First counterexample for sequences spaces}

Let $X=E=l_1$ and $H=l_2$. We consider
$$F=\tilde{E}=\left\{\gamma\in l_1:\ \|\gamma\|_F=\sum_{n=1}^{+\infty}n|\gamma_n|<+\infty\right\}.$$
We define $R$ as in \eqref{banachregular}. We note that $R$ is convex on $X$, it satisfies Assumption~\ref{strongerassum}, and we also have $F\subset X$.
Note that
$$G=\left\{\kappa:\ \|\kappa\|_{\ast}=\sup_{n\in\mathbb{N}}\frac{|\kappa_n|}{n}<+\infty\right\}$$
where the duality is given by
$$\langle \kappa,\gamma\rangle=\sum_{n=1}^{+\infty}\kappa_n\gamma_n\quad\text{for any }\kappa\in G\text{ and }\gamma\in F.$$
We fix $\alpha=2$ and $\beta=1$.

We fix constants $M\geq 2$, $\alpha_0>0$, $k\in \mathbb{N}$, $k\geq 3$, $c_0>0$, $b>0$, $0<\delta<1/k$. We call $A_1=1-k\delta$, $A_i=\delta$, $i=2,\ldots,k-1$, and $A_k=2\delta$, so that $\displaystyle \sum_{i=1}^kA_i=1$. For any $m=1,\ldots,k$, we call $\displaystyle C_m=\sum_{i=1}^mA_i$. Clearly $C_1=A_1$ and $C_k=1$.

For any $n\geq 0$ we let
$$\lambda_n=\alpha_0M^n.$$
For any $j\in \mathbb{N}$, let
$$\eta_j=\sqrt{\frac{c_0}{M^j}}\text{ and }\mu_j=-\delta\eta_j.$$
For any $j\geq 0$, let
\begin{equation}\label{ujdef}
u_j=\frac{b}{j+2}e_{j+2}
\end{equation}
and, for any $n\geq 0$, let
\begin{equation}\label{sigmandef}
\sigma_n=\sum_{j=0}^nu_j.
\end{equation}
We note that
$\displaystyle \lim_n\|\sigma_n\|_{X}=+\infty$.

We define the linear operator $\Lambda:X\to H$ as follows.
For any $j\geq 2$
$$\Lambda(e_j)=\frac{j}{b}\left[\left(\sum_{i=1}^kA_i\eta_{j+i-2}e_{j+i-2} \right)+\mu_{j+k-2}e_{j+k-2}  - \mu_{j+k-1}e_{j+k-1}\right]$$
whereas
$$\Lambda(e_1)=\left(\sum_{m=1}^kC_m\eta_me_m\right)+\left(\sum_{m=k+1}^{+\infty}\eta_ne_m\right)  +\mu_{k}e_k.$$
Clearly
\begin{equation}\label{Lambdadefinition}
\Lambda(\gamma)=\sum_{j=1}^{+\infty}\gamma_j\Lambda(e_j)\quad\text{for any }\gamma\in l_1.
\end{equation}

\begin{lem}\label{injlemma}
The operator $\Lambda$ defined in \eqref{Lambdadefinition} is a bounded linear operator from $X=l_1$ into $H=l_2$. Moreover, 
$\Lambda$ is injective on $l_1$, thus in particular on $F$.
\end{lem}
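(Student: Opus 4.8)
The plan is to establish boundedness by a uniform bound on $\|\Lambda(e_j)\|_{l_2}$, and injectivity in two stages: first to deduce that the first coordinate of any $\gamma$ in the kernel vanishes (this is the only place where the summability $\gamma\in l_1$, as opposed to mere $l_2$ membership, is used), and then to exploit a banded ``lower triangular'' structure of $\Lambda$ on $\overline{\mathrm{span}}\{e_2,e_3,\ldots\}$.

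For boundedness I would first observe that if $C:=\sup_{j}\|\Lambda(e_j)\|_{l_2}<+\infty$, then the series in \eqref{Lambdadefinition} converges absolutely in $l_2$ for every $\gamma\in l_1$ and $\|\Lambda(\gamma)\|_{l_2}\leq C\|\gamma\|_{l_1}$, so it suffices to bound $C$. For $j\geq 2$ every basis vector appearing in $\Lambda(e_j)$ has index $\geq j-1$, the weights $A_i$ sum to $1$, and $|\mu_l|=\delta\eta_l\leq\eta_l$, so the vector inside the brackets has $l_2$-norm at most $(1+2\delta)\eta_{j-1}\leq 3\eta_{j-1}$; hence $\|\Lambda(e_j)\|_{l_2}\leq\frac{3j}{b}\eta_{j-1}=\frac{3\sqrt{c_0}}{b}\,j\,M^{-(j-1)/2}$, which is bounded (and in fact tends to $0$) since $M\geq2$. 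For $j=1$, $\|\Lambda(e_1)\|_{l_2}\leq\sum_{m\geq1}\eta_m+|\mu_k|<+\infty$ because $\sum_m\eta_m=\sqrt{c_0}\sum_mM^{-m/2}<+\infty$. This gives the boundedness claim.

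For injectivity, let $\gamma\in l_1$ satisfy $\Lambda(\gamma)=0$ and look at the scalar equations $(\Lambda\gamma)_m=0$. Since $\Lambda(e_j)$ is supported on $\{j-1,\ldots,j+k-1\}$ for $j\geq2$ while $\Lambda(e_1)$ is supported on all of $\N$, I would compute $(\Lambda\gamma)_m$ for $m\geq k+1$, carefully collecting the $i=k$ term together with the telescoping $\pm\mu$ terms; factoring out $\eta_m\neq0$ and setting $z_1=\gamma_1$ and $z_j=\frac{j}{b}\gamma_j$ for $j\geq2$, this should reduce to
\[
z_1+\sum_{s=0}^{k}c_s\,z_{m+1-s}=0,\qquad m\geq k+1,
\]
with $c_0=A_1$, $c_s=A_{s+1}$ for $1\leq s\leq k-2$, and $c_{k-1}=c_k=\delta$, all nonnegative and summing to $1$ (using $A_k=2\delta$ and $\sum_{i=1}^kA_i=1$). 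Solving for $z_{m+1}$, which is legitimate since $c_0=A_1=1-k\delta>0$, and returning to the $\gamma_j$'s gives, for $m\geq k+1$,
\[
\gamma_{m+1}=-\frac{b\,\gamma_1}{A_1(m+1)}-\frac{1}{A_1(m+1)}\sum_{s=1}^{k}c_s(m+1-s)\gamma_{m+1-s},
\]
so that $|\gamma_{m+1}|\geq\frac{b|\gamma_1|}{A_1(m+1)}-\frac{1}{A_1}\sum_{s=1}^{k}|\gamma_{m+1-s}|$. Each index belongs to at most $k$ of the windows $\{m+1-k,\ldots,m\}$, hence $\sum_{m\geq k+1}\sum_{s=1}^{k}|\gamma_{m+1-s}|\leq k\|\gamma\|_{l_1}<+\infty$; summing the last inequality over $m\geq k+1$ and comparing $\sum_m\frac{1}{m+1}=+\infty$ with $\sum_m|\gamma_{m+1}|\leq\|\gamma\|_{l_1}<+\infty$ forces $\gamma_1=0$.

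With $\gamma_1=0$ the operator acts only through $e_2,e_3,\ldots$, and for $j\geq2$ the lowest-index nonzero entry of $\Lambda(e_j)$ is its $e_{j-1}$-component, with coefficient $\frac{j}{b}A_1\eta_{j-1}\neq0$. If $\gamma\neq0$, set $j_0=\min\{j\geq2:\gamma_j\neq0\}$: among $\Lambda(e_1)$ (which drops out since $\gamma_1=0$) and the $\Lambda(e_j)$ with $\gamma_j\neq0$ (so $j\geq j_0$), only $\Lambda(e_{j_0})$ contributes to coordinate $j_0-1$, so $(\Lambda\gamma)_{j_0-1}=\gamma_{j_0}\frac{j_0}{b}A_1\eta_{j_0-1}\neq0$, a contradiction; hence $\gamma=0$, and $\Lambda$ is injective on $l_1$, a fortiori on $F$. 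I expect the delicate point to be this first stage: the equations for $m\geq k+1$ determine all $z_j$ from $z_1,\ldots,z_{k+1}$ but leave a one-parameter family of pointwise solutions, and it is precisely $l_1$-summability, through the harmonic-series divergence of the $\gamma_1$-term, that removes it; the boundedness estimate and the triangular argument are routine.
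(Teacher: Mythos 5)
Your proof is correct. The boundedness argument (a uniform bound on $\|\Lambda(e_j)\|_{l_2}$ followed by absolute convergence of the series in $l_2$) is precisely what the paper dismisses as ``immediate to check''. For injectivity, both arguments rest on the same two structural facts: the banded lower-triangular form of $\Lambda$ on the span of $e_2,e_3,\ldots$ (the lowest-index nonzero entry of $\Lambda(e_j)$ sits at coordinate $j-1$ with coefficient $\tfrac{j}{b}A_1\eta_{j-1}\neq 0$), and the incompatibility of harmonic decay $\gamma_j=-b\gamma_1/j$ with membership in $l_1$. You organize them differently, though. The paper runs a single forward induction on the coordinate equations of $\Lambda(\gamma)=0$ starting at $m=1$ and shows that the formal kernel is exactly the span of $(1,-b/2,-b/3,\ldots)$, after which $\gamma_1=0$ and hence $\gamma=0$ follow in one stroke. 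You never solve the recursion: you use only the tail equations $m\geq k+1$ together with a divergence-of-partial-sums comparison ($\sum_m 1/(m+1)=+\infty$ against $\sum_j|\gamma_j|<+\infty$) to force $\gamma_1=0$, and then a separate triangularity induction --- which is the paper's induction specialized to $\gamma_1=0$ --- to kill the remaining coefficients. Your first stage is marginally more robust, since it needs only a lower bound on $|\gamma_{m+1}|$ rather than the exact solution of the recursion, at the price of a two-stage argument. Two cosmetic remarks: the double-sum bound should carry the factor $1/A_1$, i.e. $\tfrac{1}{A_1}\sum_{m}\sum_{s=1}^k|\gamma_{m+1-s}|\leq \tfrac{k}{A_1}\|\gamma\|_{l_1}$; and the tail equations alone leave a $(k+1)$-parameter family of formal solutions (parametrized by $z_1,\ldots,z_{k+1}$), not a one-parameter one --- it is the full set of equations, as in the paper's induction from $m=1$, that cuts the formal kernel down to the harmonic line. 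Neither point affects your argument, which does not use that dimension count.
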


\begin{proof}
It is immediate to check that $\Lambda$ is a bounded linear operator from $l_1$ to $l_2$, actually it is a bounded linear operator from $l_p$ to $l_2$ for any $1\leq p\leq +\infty$. We now prove that $\Lambda$ is injective on $l_1$. Let us assume that
$\tilde{\gamma}=\Lambda(\gamma)=0$, that is, $\tilde{\gamma}_j=0$ for any $j\in\mathbb{N}$. We call $b_j=b/j$, $j\geq 2$, and we claim that $\Lambda(\gamma)=0$ if and only if
$$\frac{\gamma_j}{b_j}=-\gamma_1\quad\text{for any }j\geq 2.$$
It follows that $\gamma_1=0$, thus $\gamma=0$, otherwise $\gamma=\gamma_1(1,-b/2,\ldots,-b/j,\ldots)\not\in l^1$.
We prove the claim by induction. We have
$$0=\tilde{\gamma}_1=A_1\eta_1\gamma_1+A_1\eta_1\frac{\gamma_2}{b_2},$$
that is, $\gamma_2/b_2=-\gamma_1$. Let $j\geq 3$ and assume that $\gamma_l/b_l=-\gamma_1$ for any $2\leq l<j$. For $2\leq m=j-1<k$,
$$0=\tilde{\gamma}_m=C_{m}\eta_m\gamma_1+\sum_{i=1}^mA_i\eta_m\frac{\gamma_{m+2-i}}{b_{m+2-i}}=\eta_m\left(C_m\gamma_1-\sum_{i=2}^mA_i\gamma_1+ A_1\frac{\gamma_j}{b_j}\right)$$
and the claim is true for $2\leq j\leq k$. For $m=k$ we have
\begin{multline*}
0=\tilde{\gamma}_k=(\eta_k+\mu_k)\gamma_1+(A_k\eta_k+\mu_k)\frac{\gamma_2}{b_2}+\sum_{i=1}^{k-1}A_i\eta_k\frac{\gamma_{k+2-i}}{b_{k+2-i}}\\=
\eta_k\left(C_{k-1}\gamma_1-\sum_{i=2}^{k-1}A_i\gamma_1+A_1\frac{\gamma_{k+1}}{b_{k+1}}\right)
\end{multline*}
and the claim holds for $2\leq j\leq k+1$. Let $j-1=m\geq k+1$. Then
\begin{multline*}
0=\tilde{\gamma}_{m}=\eta_{m}\gamma_1+\sum_{i=1}^{k}A_i\eta_m\frac{\gamma_{m+2-i}}{b_{m+2-i}}+\mu_m\left[\frac{\gamma_{m+2-k}}{b_{m+2-k}}-\frac{\gamma_{m+1-k}}{b_{m+1-k}}\right]\\=\eta_m\left(C_k\gamma_1-\sum_{i=2}^{k}A_i\gamma_1+A_1\frac{\gamma_{m+1}}{b_{m+1}}\right)
\end{multline*}
and the proof is concluded.\end{proof}

\bigskip

We fix $\tilde{\Lambda}=\Lambda(e_1)$. Hence Assumption~\ref{stronger} is satisfied. In fact, $e_1\in F$ and, by the injectivity of $\Lambda$, $e_1$ is the only solution to
$$0=\|\tilde{\Lambda}-\Lambda(e_1)\|_H=\min\{\||\tilde{\Lambda}-\Lambda(\gamma)\|_H:\ \gamma\in l_1\}.$$
Nevertheless, we have the following result.
\begin{teo}\label{examplethm}
For any $M\geq 2$ and $\alpha_0>0$, if $k=5$,
there exist suitable constants $c_0>0$, $b>0$, $0<\delta<1/k$ such that the sequence $\{\sigma_n\}_{n\geq 0}$ as in \eqref{ujdef}, \eqref{sigmandef} coincides with the
multiscale sequence $\{\sigma_n\}_{n\geq 0}$ given by \eqref{regularizedpbm}, \eqref{inductiveconstr} and \eqref{partialsum1}.
\end{teo}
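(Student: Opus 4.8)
The plan is to show that, for a suitable choice of the constants, at every step $n\ge0$ the vector $u_n=\frac{b}{n+2}e_{n+2}$ is a minimizer of the $n$-th subproblem \eqref{inductiveconstr}, i.e.\ of $\min\{\lambda_n\|v_{n-1}-\Lambda(u)\|_H^2+R(u):u\in l_1\}$ with $v_{n-1}=\tilde{\Lambda}-\Lambda(\sigma_{n-1})$ and $\sigma_{-1}=0$. Since $\Lambda$ is injective on $F$ by Lemma~\ref{injlemma}, Remark~\ref{uniquenessremark} gives that the minimizer of each subproblem is unique; hence, once the ansatz is verified, the multiscale sequence coincides with $\{\sigma_n\}$, and the theorem follows (in particular $\|\sigma_n\|_X=\sum_{j=0}^n b/(j+2)\to+\infty$, so convergence in the unknowns space fails).

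First I would compute the residuals explicitly. From $\Lambda(u_j)=\sum_{i=1}^kA_i\eta_{j+i}e_{j+i}+\mu_{j+k}e_{j+k}-\mu_{j+k+1}e_{j+k+1}$ for $j\ge0$ (evaluate $\Lambda(e_{j+2})$) and the definition of $\tilde{\Lambda}=\Lambda(e_1)$, a telescoping computation — the $\mu$-terms telescope, and the $A$-terms recombine through $\sum_{i\le m}A_i=C_m$ and $C_k=1$ — gives, for every $n\ge-1$,
\[
v_n=\tilde{\Lambda}-\Lambda(\sigma_n)=\sum_{l=1}^{k-1}C_l\,\eta_{n+1+l}\,e_{n+1+l}+(1-\delta)\,\eta_{n+k+1}\,e_{n+k+1}+\sum_{m\ge n+k+2}\eta_m\,e_m,
\]
a form which is self-similar under the shift $n\mapsto n+1$. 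In particular $\|v_n\|_H^2$ is of order $M^{-n}$, hence $\|v_n\|_H\to0=\delta_0$: the sequence $\{\sigma_n\}$ is already a minimizing sequence in the measurements space, and the content of the theorem is that this does not lift to convergence in $X$.

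To verify the ansatz I would use the Meyer-type characterization of Proposition~\ref{charprop}: $u_n$ solves the $n$-th subproblem if and only if $\|v_n^{\ast}\circ\Lambda\|_{\ast}\le\frac1{2\lambda_n}$ and $\langle v_n,\Lambda(u_n)\rangle_H=\frac1{2\lambda_n}\|u_n\|_F$, where $(v_n^{\ast}\circ\Lambda)_j=\langle v_n,\Lambda(e_j)\rangle_H$ and $\|\kappa\|_{\ast}=\sup_j|\kappa_j|/j$. As $\|u_n\|_F=b$, this is precisely the statement that $j\mapsto|\langle v_n,\Lambda(e_j)\rangle_H|/j$ attains its maximum at $j=n+2$, with value equal to $\frac1{2\lambda_n}$. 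Now $\Lambda(e_j)$ (for $j\ge2$) is supported on $\{e_{j-1},\dots,e_{j+k-1}\}$ while $v_n$ is supported on $\{e_m:m\ge n+2\}$, so $\langle v_n,\Lambda(e_j)\rangle_H=0$ unless $j=1$ or $j\ge n+3-k$; and on the remaining range the relations $\eta_j^2=c_0M^{-j}$, $\mu_j=-\delta\eta_j$, $\lambda_n=\alpha_0M^n$ make the whole family of conditions invariant under $n\mapsto n+1$, since $\langle v_n,\Lambda(e_j)\rangle_H=\frac{jc_0}{bM^n}\,d_{j-n}(\delta,M)$ for $n+3-k\le j\le n+k+2$, $=\frac{jc_0}{bM^{j-2}}\,d_\infty(\delta,M)$ for $j\ge n+k+3$, and $\langle v_n,\Lambda(e_1)\rangle_H=\frac{c_0}{M^n}\,d_{(1)}(\delta,M)$, with $d_m$, $d_\infty$, $d_{(1)}$ rational in $M$ and depending only on $\delta$, $M$, $k$. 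Thus all $M^n$-factors cancel and verifying the ansatz for all $n$ at once reduces to finitely many relations among $c_0,b,\delta$ (for fixed $M$, $\alpha_0$, $k$): the normalization $\frac{c_0}{b}\,d_2(\delta,M)=\frac1{2\alpha_0}$, with $d_2=\delta\sum_{l=1}^k\frac{1-(k+1-l)\delta}{M^{l+1}}>0$; the inequalities $|d_m|\le d_2$ for the finitely many offsets $m\le1$ and $3\le m\le k+2$; the inequality $d_\infty\le M^{k+1}d_2$; and $d_{(1)}\le d_2/b$.

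It remains to choose the constants, and this is where the difficulty — and the role of $k=5$ — sits. The offsets $m\le1$ are harmless: one computes that $d_m$ is $\delta$ times a proper positive sub-sum of the one defining $d_2$, so $0<d_m<d_2$. The forward offsets are delicate: for instance $d_3=d_2+\frac{(1-k\delta)(1-(k+1)\delta)}{M^2}+\frac{\delta}{M^{k+2}}$, so $|d_3|\le d_2$ forces $\delta$ into the subinterval of $(\frac1{k+1},\frac1k)$ on which $(1-k\delta)\big((k+1)\delta-1\big)\ge\delta M^{-k}$, i.e.\ the otherwise fatal term of order $M^{-2}$ must switch sign and become an aid, and there are analogous constraints coming from $m=4,\dots,k+2$; the value $k=5$ is chosen precisely so that these finitely many windows for $\delta$ admit a common point for every $M\ge2$ and $\alpha_0>0$. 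Having fixed such a $\delta$, one sets $c_0=b/(2\alpha_0 d_2(\delta,M))$, which settles the normalization and, since $d_\infty/d_2$ is then a fixed number bounded by $M^{k+1}$ (using $\delta>1/(k+1)\ge M^{-k}$), also the pure-tail condition; finally $b>0$ is taken small enough that $d_{(1)}\le d_2/b$, a condition that competes with nothing. I expect the main obstacle to be exactly this last layer of bookkeeping: each forward-offset inequality is borderline already at leading order $\delta M^{-2}$ and closes only thanks to the sign change near $\delta=1/k$, so the claim that $k=5$ is enough has to be confirmed by an explicit — though entirely elementary — estimate of the finitely many coefficients $d_m(\delta,M)$.
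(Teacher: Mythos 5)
Your proposal follows essentially the same route as the paper's proof: reduce via the Meyer-type characterization of Proposition~\ref{charprop} to checking that $j\mapsto|A_{j,n_1}|$ peaks at $j=n_1$ with value $1/(2\lambda_n)$, exploit the self-similarity of the residual to make the conditions $n$-independent, and observe that the binding constraint is exactly your $d_3\le d_2$, i.e.\ $(1-k\delta)\big((k+1)\delta-1\big)\ge \delta M^{-k}$, which the paper closes by taking $A_1=1-k\delta=\frac{1+M^{-k}}{2(k+1)}$ and checking the discriminant condition $M^{k/2}>\sqrt{k}+\sqrt{k+1}$, valid for $k=5$ and every $M\ge 2$. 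The part you defer (the offsets $4\le m\le k+2$ and the tail) indeed imposes no further restriction on $\delta$: with this choice each successive difference is bounded below by a positive multiple of $A_1^2-\delta M^{-(k+1)}>0$, so your plan is complete modulo that elementary bookkeeping.
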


\begin{oss}
A simpler version of this example, which illustrates the underlying idea without all technicalities, is presented in \cite{R-R-IPMS}. However, this simpler version does not work for any $M\geq 2$ but only for $M$ big enough, roughly for $M\geq 6$.
\end{oss}

\begin{oss}
We note that, by injectivity of $\Lambda$ and Remark~\ref{uniquenessremark}, the multiscale sequence is uniquely defined. Since $\|\sigma_n\|_X\to +\infty$ as $n\to +\infty$, we have
an example where no subsequence of $\sigma_n$ converges, despite we are in the linear case and 
both Assumptions~\ref{strongerassum} and \ref{stronger} hold.
Let us note, however, that we can also consider $\Lambda$ as a bounded linear operator from $l_2$ to $l_2$. In this case, $\Lambda$ is not injective anymore and actually
$\displaystyle \sigma_n\to \sigma_{\infty}=\sum_{j=2}\frac{b}{j}e_j$, where the series is in the $l_2$ sense, and, by the arguments developed in the proof of Lemma~\ref{injlemma}, we have $\Lambda(\sigma_{\infty})=\Lambda(e_1)$.

Another interesting fact is that, on the contrary, in this case the single-step regularization actually works. Namely, for $\lambda>0$ we call $\tilde{\sigma}_{\lambda}$ the solution to
$$\min
\left\{\lambda\|\Lambda(e_1)-\Lambda(\sigma)\|_H^2+\|\sigma\|_{F}:\ \sigma\in X\right\},$$
and we have that $\tilde{\sigma}_{\lambda}$ is uniquely defined and it is not difficult to show that indeed
$$\lim_{\lambda\to +\infty}\|\Lambda(e_1)-\Lambda(\tilde{\sigma}_{\lambda})\|_{H}=0\quad\text{and}\quad\lim_{\lambda\to +\infty}\|e_1-\tilde{\sigma}_{\lambda}\|_{X}=0.
$$

\end{oss}

\begin{proof} By the characterization of minimizers developed in Subsection~\ref{mincarsec}, in particular by Proposition~\ref{charprop}, it is enough to show that
for any $n\geq 0$ we have
\begin{multline}\label{tobeproved}
\|(\Lambda(e_1)-\Lambda(\sigma_n))\circ\Lambda\|_{\ast}\leq \dfrac1{2\lambda_n}\quad\text{and}\\\langle \Lambda(e_1)-\Lambda(\sigma_n),\Lambda(u_n)\rangle_{H}=\dfrac1{2\lambda_n}\|u_n\|_F,
\end{multline}
that is, that \eqref{char} holds for any $n\geq 0$.

For any $n\geq 0$, we call $n_1=n+2\geq 2$ and compute $\displaystyle \Lambda(\sigma_n)=\sum_{j=2}^{n_1}\frac{b}{j}\Lambda(e_j)$.
We have, for $2\leq n_1\leq k$,
\begin{multline*}
\Lambda(\sigma_n)=\left(
\sum_{m=1}^{n_1-1}C_m\eta_me_m\right) + \left(\sum_{m=n_1}^k\left(\sum_{i=1}^{n_1-1}A_{m+1-i}\right)\eta_me_m\right)\\+
\left(\sum_{m=k+1}^{n_1+k-2}\left(1-C_{m+1-n_1}\right)\eta_me_m\right)
+\mu_ke_k-\mu_{n_1+k-1}e_{n_1+k-1}
\end{multline*}
and for $n_1\geq k+1$
\begin{multline*}
\Lambda(\sigma_n)=\left(
\sum_{m=1}^{k-1}C_m\eta_me_m\right) + \left(\sum_{m=k}^{n_1-1}\eta_me_m\right)\\+
\left(\sum_{m=n_1}^{n_1+k-2}\left(1-C_{m+1-n_1}\right)\eta_me_m\right)
+\mu_ke_k-\mu_{n_1+k-1}e_{n_1+k-1}.
\end{multline*}
We conclude that, in both cases, that is, for any $n_1\geq 2$,
\begin{multline*}
\Lambda(e_1)-\Lambda(\sigma_n)   =\left(\sum_{m=n_1}^{n_1+k-2}C_{m+1-n_1}\eta_me_m\right)+
(\eta_{n_1+k-1}+\mu_{n_1+k-1})e_{n_1+k-1}
  \\+\left(\sum_{m=n_1+k}^{+\infty}\eta_me_m\right).
\end{multline*}
For any $j\in \mathbb{N}$ and $n_1=n+2\geq 2$, we call
$$A_{j,n_1}=\frac1j\langle \Lambda(e_1)-\Lambda(\sigma_n),\Lambda(e_j)\rangle_{H}$$
so that
$$\|(\Lambda(e_1)-\Lambda(\sigma_n))\circ\Lambda\|_{\ast}=\sup_{j\in\mathbb{N}}|A_{j,n_1}|.$$
Our aim is to show that the following claim is true.

\begin{claim}\label{crucialclaim} 
For any $M\geq 2$ and $\alpha_0>0$, if $k=5$,
there exist suitable constants $c_0>0$, $b>0$, $0<\delta<1/k$ such that
the following holds.
 For any $n\geq 0$, calling $n_1=n+2$, we have
\begin{equation}\label{crucialclaimprop}
|A_{j,n_1}|\leq A_{n_1,n_1}=\frac{1}{2\lambda_n}=\frac1{2\alpha_0M^{n}}=\frac{M^2}{2\alpha_0M^{n_1}}\quad\text{for any }j\in\mathbb{N}.
\end{equation}
More precisely,
let $j(n_1)=\min\{j:\ 2\leq j\leq n_1: A_{j,n_1}\neq 0\}$. We claim that 
\begin{multline}\label{A_jincrea}
0<A_{1,n_1}<A_{j(n_1),n_1},\quad A_{j,n_1}<A_{j+1,n_1}\text{ for any }j(n_1)\leq j<n_1\\\text{and}\quad
0<A_{1,n_1}+A_{j(n_1),n_1}< A_{j(n_1)+1,n_1}\text{ if }j(n_1)<n_1
\end{multline}
and
\begin{equation}\label{A_jdecrea}
0<A_{j+1,n_1}<A_{j,n_1}\quad\text{for any }j\geq n_1.
\end{equation}
\end{claim}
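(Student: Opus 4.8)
The plan is to evaluate every $A_{j,n_1}$ in closed form, to observe that the quantities entering the sharp comparisons are all proportional to $c_0M^{-n_1}$, so that dividing by this factor turns the claim into a finite system consisting of one identity and several strict inequalities between rational functions of $M,\delta,b$ alone, and finally to choose $c_0,b,\delta$ so that this system holds for every $M\geq2$.

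First I would substitute into $A_{j,n_1}=\frac1j\langle\Lambda(e_1)-\Lambda(\sigma_n),\Lambda(e_j)\rangle_H$ the expression for $\Lambda(e_1)-\Lambda(\sigma_n)$ obtained above, the definition of $\Lambda(e_j)$, and the relations $\eta_m^2=c_0M^{-m}$, $\mu_m=-\delta\eta_m$. Since $\Lambda(e_j)$ is supported on $\{j-1,\dots,j+k-1\}$ and $\Lambda(e_1)-\Lambda(\sigma_n)$ on $\{n_1,n_1+1,\dots\}$, the inner product is a finite sum which vanishes unless $j+k-1\geq n_1$; this identifies $j(n_1)=\max\{2,n_1-k+1\}$ and splits the analysis into $j=1$, $j(n_1)\leq j\leq n_1$, and $j>n_1$, with $j\geq n_1+k$ the simplest since then $\Lambda(e_j)$ meets only the tail $\sum_{m\geq n_1+k}\eta_m e_m$. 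Two structural facts organize the computation. On the block $\{n_1,\dots,n_1+k-1\}$ the coefficients of $\Lambda(e_1)-\Lambda(\sigma_n)$ are $C_1\eta,\dots,C_{k-1}\eta,(1-\delta)\eta$ in order, while the bracketed vector defining $\Lambda(e_j)$ carries the large weight $A_1$ at its leftmost index $j-1$ and only the small weight $\delta$ at the remaining $k$ indices $j,\dots,j+k-1$ (using $A_k-\delta=\delta$ and $-\mu_m=\delta\eta_m$). Consequently, for $j(n_1)\leq j\leq n_1$ the $A_1$-index $j-1$ lies below $n_1$, so $A_{j,n_1}=\frac{\delta}b\big(C_1\eta_{n_1}^2+C_2\eta_{n_1+1}^2+\cdots\big)$ is a partial sum of positive terms, $k$ of them when $j=n_1$ and just one when $j=j(n_1)$; in particular $A_{n_1,n_1}=\frac{\delta}b\eta_{n_1}^2\big(C_1+C_2M^{-1}+\cdots+C_{k-1}M^{-(k-2)}+(1-\delta)M^{-(k-1)}\big)$, of order $\delta\,c_0M^{-n_1}$, and $A_{j(n_1),n_1}=\frac{\delta A_1}{b}\eta_{n_1}^2$. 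In general, and for $n_1\geq k+1$ (the finitely many cases $2\leq n_1\leq k$ being checked directly), $A_{j,n_1}$ equals $\frac{c_0}b M^{-\ell(j,n_1)}$ times a rational function of $\delta,1/M$ and the $A_i,C_m$, with $\ell(j,n_1)$ an explicit integer equal to $n_1$ when $j\leq n_1+k-1$ and to $j-1$ when $j\geq n_1+k$.

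Next I would impose the identity $A_{n_1,n_1}=\frac1{2\lambda_n}=\frac{M^2}{2\alpha_0}M^{-n_1}$, which fixes $c_0$ in terms of $b,\delta$ (and $M,\alpha_0$) and gives the equalities in \eqref{crucialclaimprop}. What remains, after cancelling $c_0M^{-n_1}$, are inequalities between rational functions of $M,\delta,b$. The increasing chain \eqref{A_jincrea} telescopes: for $j(n_1)\leq j<n_1$, $A_{j+1,n_1}-A_{j,n_1}$ is $\frac{\delta}b$ times one additional positive term $C_m\eta^2$ or $(1-\delta)\eta^2$; and the two extra conditions involving $A_{1,n_1}$ — which is also of order $c_0M^{-n_1}$ — amount to $A_{1,n_1}$ being smaller than $A_{j(n_1),n_1}$ and than the first increment, which holds once $b$ is small enough relative to $\delta$. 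The decreasing chain \eqref{A_jdecrea} is automatic for $j\geq n_1+k$ (consecutive $A_{j,n_1}$ differ by the factor $1/M<1$), and for $n_1<j<n_1+k$ the difference $A_{j,n_1}-A_{j+1,n_1}$ contains a positive $A_1$-term (the leftmost weight $A_1$ now matching a block coefficient) plus a term proportional to $(\delta-A_1)\eta^2$, minus a negligible tail term, so it is positive provided $A_1>0$ and $\delta>1/(k+1)$ — \emph{except} at $j=n_1$, where the $A_1$-term is absent and one is left with the single sharp requirement $\delta(A_1-M^{-k})>A_1^2$. Finally $|A_{j,n_1}|\leq A_{n_1,n_1}$ for all $j$ follows since all $A_{j,n_1}$ are then positive and $A_{n_1,n_1}$ is their maximum; for $j\geq n_1+k$ this is clear from $\ell(j,n_1)=j-1\geq n_1+k-1$.

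To close the argument I would fix $k=5$, choose $A_1$ (equivalently $\delta=(1-A_1)/k$) inside the interval on which the quadratic $q_M(a)=(k+1)a^2-(1+M^{-k})a+M^{-k}$ is negative — which is precisely the condition $\delta(A_1-M^{-k})>A_1^2$ — then take $b$ small enough for the $A_{1,n_1}$-comparisons, and set $c_0$ from the identity above; one checks that on this interval automatically $0<A_1<\frac1{k+1}$, hence $\delta\in(\frac1{k+1},\frac1k)$ and $\delta-A_1>0$, exactly what the remaining comparisons need. The hard part is that the interval on which $q_M<0$ must be nonempty for \emph{every} $M\geq2$ with the \emph{same} $k$: its discriminant $(1+M^{-k})^2-4(k+1)M^{-k}$ is positive for all $M\geq2$ iff it is positive at $M=2$, i.e. iff $(1+2^{-k})^2>4(k+1)2^{-k}$, which fails for $k=3,4$ and first holds for $k=5$. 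This is the source of the restriction $k=5$ in the statement: the identity $A_{n_1,n_1}=\frac1{2\lambda_n}$ forces the diagonal term to be only $O(\delta)$, so the ``echo'' term $A_1^2\eta_{n_1}^2$ in $A_{n_1+1,n_1}$ must be made still smaller by shrinking $A_1$, yet $A_1$ cannot be too small without spoiling $\delta-A_1>0$; squeezing these demands uniformly in $M\geq2$ works only once $k$ is large enough, which is also why the cruder construction of \cite{R-R-IPMS} (effectively $k=3$) needs $M$ bounded away from $2$. Everything else is routine manipulation of geometric series.
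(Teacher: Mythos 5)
Your proposal follows essentially the same route as the paper: explicit computation of every $A_{j,n_1}$ (each proportional to $c_0M^{-n_1}$), fixing $c_0$ from the diagonal identity $A_{n_1,n_1}=1/(2\lambda_n)$, taking $b$ small to handle the $A_{1,n_1}$ comparisons, telescoping the increasing chain, using the ratio $1/M$ for $j\geq n_1+k$, and isolating the one sharp inequality $A_{n_1+1,n_1}<A_{n_1,n_1}$ as the quadratic $q_M(A_1)=(k+1)A_1^2-(1+M^{-k})A_1+M^{-k}<0$, whose discriminant condition $M^{k/2}>\sqrt{k}+\sqrt{k+1}$ at $M=2$ is precisely the paper's reason for $k=5$ (the paper takes $A_1$ at the vertex of $q_M$, i.e.\ $A_1=(1+M^{-k})/(2(k+1))$). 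The only slip is in your heuristic commentary: the obstruction to shrinking $A_1$ is not $\delta-A_1>0$ (which only improves as $A_1$ decreases) but the lower bounds of type $A_1>M^{-k}$ coming from the quadratic and the requirement that the tail term $\delta/M^{k+1}$ be dominated in the comparisons for $n_1<j<n_1+k$; this does not affect the correctness of the argument.
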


Indeed,
\eqref{tobeproved} immediately follows from Claim~\ref{crucialclaim}, hence the proof of Theorem~\ref{examplethm} would be concluded.

It remains to prove Claim~\ref{crucialclaim}. We begin by computing $A_{j,n_1}$. First,
\begin{multline*}
A_{1,n_1}=\left(\sum_{m=n_1}^{k-1}C_{m+1-n_1}C_m\eta^2_m\right)+
C_{k+1-n_1}\eta_{k}(\eta_{k}+\mu_{k})\\+\left(\sum_{m=k+1}^{n_1+k-2}C_{m+1-n_1}\eta^2_m\right)+
(\eta_{n_1+k-1}+\mu_{n_1+k-1})\eta_{n_1+k-1}
 \\ +\left(\sum_{m=n_1+k}^{+\infty}\eta^2_m\right)\quad\text{for }2\leq n_1<k,
\end{multline*}
and
\begin{multline*}
A_{1,n_1}=C_1\eta_{n_1}(\eta_{n_1}+\mu_{n_1})+\left(\sum_{m=n_1+1}^{n_1+k-2}C_{m+1-n_1}\eta^2_m\right)\\+
(\eta_{n_1+k-1}+\mu_{n_1+k-1})\eta_{n_1+k-1}
  +\left(\sum_{m=n_1+k}^{+\infty}\eta^2_m\right)\quad\text{for }n_1=k,
\end{multline*}
and, finally,
\begin{multline*}
A_{1,n_1}=\left(\sum_{m=n_1}^{n_1+k-2}C_{m+1-n_1}\eta^2_m\right)+
(\eta_{n_1+k-1}+\mu_{n_1+k-1})\eta_{n_1+k-1}
  \\+\left(\sum_{m=n_1+k}^{+\infty}\eta^2_m\right)\quad\text{for }n_1>k.
\end{multline*}
We  note that in all cases, that is, for any $n_1\geq 2$
$$0< A_{1,n_1}\leq \sum_{m=n_1}^{+\infty}\eta_m^2=\frac{c_0}{M^{n_1}}\left(\frac{M}{M-1}\right).$$

We have
$$A_{j,n_1}=0\quad\text{for any }2\leq j\leq n_1-k$$
from which $j(n_1)$ as in the statement of the claim is equal to $\max\{2,n_1-k+1\}$.

For $j=n_1+s-k$, with $1\leq s<k$ and $j\geq 2$, we have
\begin{multline*}
bA_{n_1+s-k,n_1}=\left(\sum_{m=n_1}^{n_1+s-3}C_{m+1-n_1}A_{2+k-s+m-n_1}\eta_m^2\right)\\+C_{s-1}\eta_{n_1+s-2}(A_k\eta_{n_1+s-2}+\mu_{n_1+s-2})
-C_s\mu_{n_1+s-1}\eta_{n_1+s-1}.
\end{multline*}
Hence
$$
 A_{n_1+s-k,n_1}=\frac{c_0\delta}{bM^{n_1}}\left(\sum_{m=1}^{s}\frac{C_{m}}{M^{m-1}}\right).
$$
The most important term is for $j=n_1$
\begin{multline*}
bA_{n_1.n_1}=\left(\sum_{m=n_1}^{n_1+k-3}C_{m+1-n_1}A_{2+m-n_1}\eta_m^2\right)\\+C_{k-1}\eta_{n_1+k-2}(A_k\eta_{n_1+k-2}+\mu_{n_1+k-2})
-\mu_{n_1+k-1}(\eta_{n_1+k-1}+\mu_{n_1+k-1}).
\end{multline*}
We have that
\begin{multline*}
A_{n_1.n_1}=\frac{c_0}{bM^{n_1}}\left[\left(\sum_{m=0}^{k-3}\frac{C_{m+1}A_{2+m}}{M^{m}}\right)+C_{k-1}\frac{A_k-\delta}{M^{k-2}}+\frac{\delta(1-\delta)}{M^{k-1}}\right]
\\=\frac{c_0\delta}{bM^{n_1}}\left(\sum_{m=1}^{k-1}\frac{C_m}{M^{m-1}}+\frac{1-\delta}{M^{k-1}}\right)
=\frac{c_0\delta}{bM^{n_1}}K
=\frac{M^2}{2\alpha_0M^{n_1}}
\end{multline*}
hence $c_0$ has to be defined as
\begin{equation}
c_0=\frac{bM^2}{2\alpha_0\delta K}.
\end{equation}
Note that $K>0$ depends on $M$, $k$ and $\delta$ only.
It is immediate to check that $0\leq A_{j,n_1}\leq A_{n_1,n_1}$ for any $2\leq j<n_1$ and that the second condition in \eqref{A_jincrea} is satisfied.
Moreover, the first and third conditions in \eqref{A_jincrea}, which immediately imply $0\leq A_{1,n_1}\leq A_{n_1,n_1}$, are satisfied
provided we fix $b$ such that
$$\frac{M}{M-1}<\frac{\delta(1-k\delta)}{bM^{k-1}},\text{ that is, }b=\frac{(M-1)\delta(1-k\delta)}{2M^{k}}.$$
Hence, once $k$ and $\delta$ are fixed, first $b$ and then $c_0$ can be chosen as required. The choice of $k$ and $\delta$ is crucial to compare
$A_{n_1,n_1}$ with $A_{j,n_1}$ for $j>n_1$, which we compute next.
For $j=n_1+1$ we have
\begin{multline*}
bA_{n_1+1,n_1}=\left(\sum_{m=n_1}^{n_1+k-2}A_{m+1-n_1}C_{m+1-n_1}   \eta_m^2\right) \\+
 (A_k-\delta)(1-\delta)\eta^2_{n_1+k-1}+\delta\eta^2_{n_1+k}
\end{multline*}
therefore
$$
A_{n_1+1,n_1}=\frac{c_0}{bM^{n_1}}\left(A_1^2+\delta\sum_{m=1}^{k-2}\frac{C_{m+1}}{M^{m}}+
 \frac{\delta(1-\delta)}{M^{k-1}}+\frac{\delta}{M^k}\right).
$$
We recall that 
$$A_{n_1,n_1}=\frac{c_0}{bM^{n_1}}\left(A_1\delta+\delta\sum_{m=1}^{k-2}\frac{ C_{m+1}}{M^{m}}+\frac{\delta(1-\delta)}{M^{k-1}}\right),$$
hence we need to show that, for some $0<A_1<1$ and $0<\delta=(1-A_1)/k<1/k$, we have
$$A_1\delta= \frac{A_1(1-A_1)}{k}> A_1^2+ \frac{\delta}{M^k}=A_1^2+ \frac{1-A_1}{kM^k},$$
that is, multiplying by $k$,
$$ (k+1)A_1^2+\left(-1-\frac{1}{M^k}\right)A_1+\frac{1}{M^k}< 0.$$
For $0<A_1=\frac{1+\frac{1}{M^k}}{2(k+1)}<1$, the previous polynomial is negative if and only if
$$1+\frac{1}{M^k}> 2\sqrt{\frac{k+1}{M^k}}\quad\text{hence when }M^{k/2}> \sqrt{k}+\sqrt{k+1}.$$
But for $k=5$ we have
$$M^{5/2}\geq 2^{5/2}>\sqrt{5}+\sqrt{6}.$$
We conclude that, for $k=5$ and 
$$A_1=\frac{1+\frac{1}{M^5}}{12}\text{ or, equivalently, }
\delta=\frac{11-\frac{1}{M^5}}{60}$$
we have $A_{n_1+1,n_1}<A_{n_1,n_1}$. We also note that $A_1<\delta$.

For $j=n_1+s$, $2\leq s\leq k$, we have
\begin{multline*}
bA_{j,n_1}=\left(\sum_{m=n_1+s-1}^{n_1+k-2}A_{m+2-n_1-s}C_{m+1-n_1}   \eta_m^2\right) +  A_{k+1-s}(1-\delta)\eta^2_{n_1+k-1}    +\\
+\left(\sum_{m=n_1+k}^{n_1+s+k-3}A_{m+2-n_1-s}\eta^2_m\right)
+
(A_k\eta_{j+k-2}+\mu_{j+k-2})\eta_{j+k-2}-\mu_{j+k-1}\eta_{j+k-1},
\end{multline*}
that is,  for $j=n_1+s$, $2\leq s\leq k-2$,
$$
A_{j,n_1}\\=\frac{c_0}{bM^{n_1}}\left(\frac{A_1C_s}{M^{s-1}}+\sum_{m=s}^{k-2}\frac{\delta C_{m+1}}{M^m}+\frac{\delta(1-\delta)}{M^{k-1}}+\sum_{m=k}^{k+s-1}\frac{\delta}{M^m}\right).
$$
and
$$
A_{n_1+k-1,n_1}\\=\frac{c_0}{bM^{n_1}}\left(\frac{A_1C_{k-1}}{M^{k-2}}+\frac{\delta(1-\delta)}{M^{k-1}}+\sum_{m=k}^{2k-2}\frac{\delta}{M^m}\right).
$$
and
$$A_{n_1+k,n_1}=\frac{c_0}{bM^{n_1}}\left(\frac{A_1(1-\delta)}{M^{k-1}}+\sum_{m=k}^{2k-1}\frac{\delta}{M^m}\right).
$$
Let us compare $A_{n_1+s,n_1}$ with $A_{n_1+s+1,n_1}$ for $1\leq s\leq k-2$. We have
\begin{multline*}
\frac{bM^{n_1+s-1}}{c_0}\left(A_{n_1+s,n_1}-A_{n_1+s+1,n_1}\right)\\=A_1C_s+\frac{(\delta-A_1)C_{s+1}}{M}-\frac{\delta}{M^{k+1}}
> A_1^2-\frac{\delta}{M^{k+1}}
\end{multline*}
since $C_s\geq A_1$ for any $1\leq s \leq k-2$ and $A_1<\delta$. Since
$$A_1^2>\frac{1}{144}>\frac{11}{60}\frac{1}{2^6}> \frac{\delta}{M^6}$$
we conclude that for any $1\leq s\leq k-2$
\begin{equation}\label{almostlast}
A_{n_1,n_1}>A_{n_1+s,n_1}>A_{n_1+s+1,n_1}.
\end{equation}
For $s=k-1$,
\begin{multline*}
\frac{bM^{n_1+k-2}}{c_0}\left(A_{n_1+k-1,n_1}-A_{n_1+k,n_1}\right)\\=A_1C_{k-1}+\frac{(\delta-A_1)(1-\delta)}{M}-\frac{\delta}{M^{k+1}}
> A_1^2-\frac{\delta}{M^{k+1}}
\end{multline*}
and we infer that \eqref{almostlast} holds for any $1\leq s\leq k-1$.

For $j\geq n_1+k+1$
$$bA_{j,n_1}=\left(\sum_{m=j-1}^{j+k-3}A_{m+2-j}   \eta^2_m\right) +  (A_k\eta_{j+k-2}+\mu_{j+k-2})\eta_{j+k-2}-\mu_{j+k-1}\eta_{j+k-1}$$
hence
$$A_{j,n_1}=\frac{c_0}{bM^{n_1}}\left(\frac{A_1}{M^{j-n_1-1}}+ \sum_{m=j-n_1}^{j-n_1+k-1}\frac{\delta}{M^{m}} \right)\quad\text{for }j\geq n_1+k+1.
$$
It is immediate to see that for any $j\geq n_1+k+1$ we have
$$A_{j,n_1}= M A_{j+1,n_1}> A_{j+1,n_1}.$$
 Hence to prove \eqref{A_jdecrea} it is enough to show that
$A_{n_1+k,n_1}>A_{n_1+k+1,n_1}$.
We have
\begin{multline*}
\frac{bM^{n_1+k-1}}{c_0}\left(A_{n_1+k,n_1}-A_{n_1+k+1,n_1}\right)\\=A_1(1-\delta)+\frac{(\delta-A_1)}{M}-\frac{\delta}{M^{k+1}}
> A_1^2-\frac{\delta}{M^{k+1}}
\end{multline*}
since $A_1+\delta<1$. We have shown \eqref{A_jdecrea}, thus the proof is concluded.\end{proof}

\subsection{Second counterexample for sequences spaces}
A simple rescaling argument leads to the following case.
Let $X=E=H=l_2$. We consider
\begin{equation}\label{F2defin}
F=\tilde{E}=F_2=\left\{\gamma\in l_2:\ \|\gamma\|_{F_2}=\sum_{n=1}^{+\infty}\sqrt{n}|\gamma_n|<+\infty\right\}.
\end{equation}

For any $j\geq 0$, let
\begin{equation}\label{ujdefsecond}
u_j=\frac{b}{\sqrt{j+2}}e_{j+2}
\end{equation}
and, for any $n\geq 0$, let
\begin{equation}\label{sigmandefsecond}
\sigma_n=\sum_{j=0}^nu_j.
\end{equation}
We note that
$\displaystyle \lim_n\|\sigma_n\|_{X}=+\infty$.

Using the same notation as before,
we define the linear operator $\Lambda:X\to H$ as follows.
For any $j\geq 2$
\begin{multline}\label{2operatorj>1}
\Lambda(e_j)\\=\frac{\sqrt{j}}{b}\left[\left(\sum_{i=1}^kA_i\eta_{j+i-2}e_{j+i-2} \right)+\mu_{j+k-2}e_{j+k-2}  - \mu_{j+k-1}e_{j+k-1}\right]
\end{multline}
whereas
\begin{equation}\label{2operatorj=1}
\Lambda(e_1)=\left(\sum_{m=1}^kC_m\eta_me_m\right)+\left(\sum_{m=k+1}^{+\infty}\eta_me_m\right)  +\mu_{k}e_k.
\end{equation}
Clearly
\begin{equation}\label{Lambdadefinitionsecond}
\Lambda(\gamma)=\sum_{j=1}^{+\infty}\gamma_j\Lambda(e_j)\quad\text{for any }\gamma\in l_2.
\end{equation}

As in Lemma~\ref{injlemma}, we have that
the operator $\Lambda$ defined in \eqref{Lambdadefinitionsecond} is a bounded linear operator from $X=l_2$ into $H=l_2$. Moreover, 
$\Lambda$ is injective on $l_2$, thus in particular on $F=F_2$. Then, for $\tilde{\Lambda}=\Lambda(e_1)$, in a completely analogous way one can prove the corresponding version of Theorem~\ref{examplethm}.
\begin{teo}\label{examplethmsecond}
For any $M\geq 2$ and $\alpha_0>0$, if $k=5$,
there exist suitable constants $c_0>0$, $b>0$, $0<\delta<1/k$ such that the sequence $\{\sigma_n\}_{n\geq 0}$ as in \eqref{ujdefsecond}, \eqref{sigmandefsecond} coincides with the
multiscale sequence $\{\sigma_n\}_{n\geq 0}$ given by \eqref{regularizedpbm}, \eqref{inductiveconstr} and \eqref{partialsum1}.
\end{teo}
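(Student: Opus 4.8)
The plan is to deduce Theorem~\ref{examplethmsecond} from Theorem~\ref{examplethm} by a diagonal change of variables, so that none of the delicate estimates of Claim~\ref{crucialclaim} need be repeated. Throughout, attach a superscript $(1)$ to the objects of the first counterexample and a superscript $(2)$ to those of the present one; thus $F_1$ carries the norm $\sum_n n|\gamma_n|$, $\Lambda_1$ is the operator of \eqref{Lambdadefinition} and $u^{(1)}_j,\sigma^{(1)}_n$ are as in \eqref{ujdef}, \eqref{sigmandef}, while $F_2=\tilde E$ is as in \eqref{F2defin}, $\Lambda_2=\Lambda$ as in \eqref{Lambdadefinitionsecond} and $u^{(2)}_j,\sigma^{(2)}_n$ as in \eqref{ujdefsecond}, \eqref{sigmandefsecond}, and the parameters $\lambda_n$ are the same in both problems. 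Introduce the diagonal map $T$, defined on finitely supported sequences and more generally on $F_1$, by $Te_n=\sqrt n\,e_n$, i.e.\ $(T\gamma)_n=\sqrt n\,\gamma_n$.

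First I would record the elementary facts that make $T$ the correct change of variables. (i) Since $\|T\gamma\|_{F_2}=\sum_n\sqrt n\cdot\sqrt n\,|\gamma_n|=\sum_n n|\gamma_n|=\|\gamma\|_{F_1}$, the map $T$ is an isometric isomorphism of $F_1$ onto $F_2$ (in particular $TF_1\subset l_2$). (ii) Comparing \eqref{2operatorj>1}--\eqref{2operatorj=1} with \eqref{Lambdadefinition}, one sees that $\Lambda_2(e_1)=\Lambda_1(e_1)$ and $\Lambda_2(e_j)=\tfrac1{\sqrt j}\Lambda_1(e_j)$ for $j\ge2$, whence $\Lambda_2(T\gamma)=\Lambda_1(\gamma)$ for every $\gamma\in F_1$; in particular the datum $\tilde\Lambda=\Lambda_2(e_1)=\Lambda_1(e_1)$ is the same in both problems. (iii) $Tu^{(1)}_j=\sqrt{j+2}\cdot\tfrac{b}{j+2}e_{j+2}=\tfrac{b}{\sqrt{j+2}}e_{j+2}=u^{(2)}_j$, so $T\sigma^{(1)}_n=\sigma^{(2)}_n$ for every $n$. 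I would also note, exactly as in Lemma~\ref{injlemma} and by the same argument, that $\Lambda_2$ is a bounded linear operator on $l_2$ which is injective on $l_2$, so that, by Remark~\ref{uniquenessremark}, the minimizer at each step of the multiscale procedure for the second problem, if it exists, is unique; hence the multiscale sequence of the second problem is uniquely determined.

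The core of the argument is then a transfer of the minimization problems \eqref{regularizedpbm}, \eqref{inductiveconstr}. Since $R$ equals $+\infty$ off $F_1$ (resp.\ $F_2$) while the functional is finite at $u=0$, every minimizer of a step of either procedure lies in $F_1$ (resp.\ $F_2$), and the constraint $\sigma_{n-1}+u\in E$ is void because $E=X=l_1$ (resp.\ $l_2$). Arguing by induction on $n$, assume $\sigma^{(2)}_{n-1}=T\sigma^{(1)}_{n-1}$ (with $\sigma^{(i)}_{-1}=0$). For $u'\in F_1$ and $u=Tu'\in F_2$ one has $\sigma^{(2)}_{n-1}+u=T(\sigma^{(1)}_{n-1}+u')$, so by (i)--(ii)
$$
\lambda_n\big(\|\tilde\Lambda-\Lambda_2(\sigma^{(2)}_{n-1}+u)\|_{l_2}\big)^{\alpha}+\big(\|u\|_{F_2}\big)^{\beta}=\lambda_n\big(\|\tilde\Lambda-\Lambda_1(\sigma^{(1)}_{n-1}+u')\|_{l_2}\big)^{\alpha}+\big(\|u'\|_{F_1}\big)^{\beta},
$$
and, since $T$ is a bijection of $F_1$ onto $F_2$, $u$ solves the step-$n$ problem of the second procedure if and only if $u'=T^{-1}u$ solves the step-$n$ problem of the first. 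By Theorem~\ref{examplethm}, for $k=5$ and suitable $c_0,b,\delta$ the latter is solved, uniquely by injectivity of $\Lambda_1$, by $u^{(1)}_n$ of \eqref{ujdef}; hence the step-$n$ minimizer of the second procedure is $Tu^{(1)}_n=u^{(2)}_n$ of \eqref{ujdefsecond} and $\sigma^{(2)}_n=T\sigma^{(1)}_n$. This closes the induction (the base case $n=0$ is the same computation with $\sigma_{-1}=0$, using \eqref{regularizedpbm}) and, together with the uniqueness observed above, identifies the multiscale sequence of the second problem with $\{\sigma^{(2)}_n\}$ of \eqref{ujdefsecond}, \eqref{sigmandefsecond}.

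I do not expect a genuine obstacle, since the hard analytic content is already contained in Claim~\ref{crucialclaim}; the only point requiring care is that $T$ is \emph{unbounded} on $l_2$ (indeed $\|Te_n\|_{l_2}=\sqrt n$), so the reduction cannot be phrased as an isomorphism of the ambient spaces and must be carried out inside the finite-energy subspaces $F_1,F_2$, using that minimizers necessarily lie there and that the feasibility constraint is void. Existence of a minimizer at each step --- needed in order to speak of the multiscale sequence at all --- transfers along $T$ from the first counterexample, or may be verified directly as in Lemma~\ref{injlemma}, since $F_2$-bounded sets are relatively compact in $l_2$ and $\|\cdot\|_{F_2}$ is lower semicontinuous on $l_2$. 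One could alternatively bypass $T$ altogether and simply rerun the computation of Claim~\ref{crucialclaim} with the weight $n$ replaced by $\sqrt n$ and $\Lambda(e_j)$ rescaled accordingly, but the change of variables makes this unnecessary.
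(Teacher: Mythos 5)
Your argument is correct, and it is the ``simple rescaling argument'' that the paper alludes to, made fully explicit on the primal side. The paper's own proof implements the same rescaling on the dual side: it keeps the characterization of minimizers of Proposition~\ref{charprop} and simply observes that the quantities $A_{j,n_1}=\frac{1}{\sqrt{j}}\langle \Lambda(e_1)-\Lambda(\sigma_n),\Lambda(e_j)\rangle_{H}$ of the second example coincide with those of the first, so that Claim~\ref{crucialclaim} applies verbatim and the optimality conditions \eqref{char} hold at every step. Your diagonal isometry $T e_n=\sqrt{n}\,e_n$ of $F_1$ onto $F_2$, together with $\Lambda_2\circ T=\Lambda_1$ and $Tu^{(1)}_j=u^{(2)}_j$, instead identifies the two sequences of minimization problems outright, so that minimizers, their existence, and their uniqueness all transfer without re-invoking Proposition~\ref{charprop}; the coincidence of the $A_{j,n_1}$ is exactly the dual shadow of this identification. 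Your approach is slightly cleaner and more robust (it would work for any $\alpha,\beta$ and any weight rescaling compatible with the norms), and you correctly flag the one point needing care, namely that $T$ is unbounded on the ambient space $l_2$ and the reduction must live inside the effective domains $F_1,F_2$, where it is legitimate because $R=+\infty$ off these subspaces and $E=X$ makes the feasibility constraint void. Two very minor quibbles: the existence of minimizers is not really ``as in Lemma~\ref{injlemma}'' (that lemma concerns boundedness and injectivity), though your parenthetical direct-method justification via the compactness of $F_2$-balls in $l_2$ and the lower semicontinuity of $\|\cdot\|_{F_2}$ is the right one; and uniqueness via Remark~\ref{uniquenessremark} only needs injectivity of $\Lambda_2$ on $F_2$, which also follows at once from $\Lambda_2\circ T=\Lambda_1$ and the injectivity of $\Lambda_1$ on $l_1\supset F_1$, without repeating the kernel computation on all of $l_2$.
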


We just make a few remarks. If, for any $j\in \mathbb{N}$ and $n_1=n+2\geq 2$, we call
$$A_{j,n_1}=\frac1{\sqrt{j}}\langle \Lambda(e_1)-\Lambda(\sigma_n),\Lambda(e_j)\rangle_{H},$$
then
$$\|(\Lambda(e_1)-\Lambda(\sigma_n))\circ\Lambda\|_{\ast}=\sup_{j\in\mathbb{N}}|A_{j,n_1}|.$$
Moreover, the numbers $A_{j,n_1}$ coincide with those of the first example, hence they satisfy Claim~\ref{crucialclaim} from which the proof of Theorem~\ref{examplethmsecond}
immediately follows.

\subsection{A not converging multiscale decomposition for a bounded linear blurring operator on the line}
We call
$$e_j=(-1)^j\frac{\sqrt{j}}{2}\chi_{I_j}\quad \text{for any }j\in\mathbb{N}.$$
where $\{I_j=[a_j,b_j]\}_{j\in\mathbb{N}}$ is a sequence of closed pairwise disjoint intervals such that $a_1=0$ and
$$0\leq a_j<b_j=a_j+\frac{4}{j}<a_{j+1}=b_j+j^2\quad\text{for any }j\geq 1.$$
We call $L_j=(b_j,a_{j+1})$ and we have $|I_j|=4/j$ and $|L_j|=j^2$ for any $j\in\mathbb{N}$.

We have that $\{e_j\}_{j\in\mathbb{N}}$ is an orthonormal system in $L^2(\mathbb{R})$
and for any $\gamma\in l_2$ we have
$$\left\|\sum_{j=1}^{+\infty}\gamma_je_j\right\|_{L^2(\mathbb{R})}=\|\gamma\|_{l_2}\quad\text{and}\quad \left\|\sum_{j=1}^{+\infty}\gamma_je_j\right\|_{BV(\mathbb{R})}=
\|\gamma\|_{F_2}$$
where $F_2$ is as in \eqref{F2defin} and $\|\cdot\|_{BV(\R)}$ is the norm of the homogeneous $BV(\R)$ space, that is, it coincides with the total variation.
We call $\tilde{H}$ the closure of the span of $\{e_j\}_{j\in\mathbb{N}}$, that is,
$$u\in \tilde{H}\quad\text{if and only if}\quad u=\sum_{j=1}^{+\infty}\gamma_je_j\text{ for some }\gamma\in l_2.$$
We call
$$K=\Big\{v\in L^2(\mathbb{R}):\ v=\sum_{j=1}^{+\infty}\kappa_j\chi_{L_j}\text{ for some }\kappa\in l_{\infty}\Big\}$$
and note that $K$ is orthogonal to $\tilde{H}$. We call
$$P=(\tilde{H}+K)^{\perp}$$
hence any $w\in L^2(\mathbb{R})$ can be written in a unique way as
$$w=u+v+z\quad\text{with }u\in \tilde{H},\ v\in K\ \text{and}\ z\in P.$$
We note that if $z\in P$, then
$$\int_{I_j}z=\int_{L_j}z=0\quad\text{for any }j\in\mathbb{N}.$$

Let us investigate the behaviour of the total variation for such a decomposition. We use the following notation.
For any $a \in\R$ and for a (good representative of a) $BV$ function $w$, we denote $\displaystyle a^{\pm}=\lim_{x\to a^{\pm}}w(x)$.
We have the following lemma whose proof is immediate.
\begin{lem}\label{BVdecomposition}
Let $z\in L^2((-\infty,0))$. Then
$$|Dz|((-\infty,0))\geq |z(0^-)|.$$
Let $I$ an open interval and let $z\in L^2(I)$ be such that 
$\int_Iz=0$. Then
$$|Dz|(I)\geq |z^+(a)|+|z^-(b)|.$$
Finally, we have, for $w=u+v+z\in L^2(\R)$,
\begin{multline*}
\|u+v+z\|_{BV(\mathbb{R})}\geq
\bigg|-\gamma_1\frac{\sqrt{1}}{2}+z(a_1^+)-z(a_1^-)\bigg|\\
+\sum_{j=1}^{+\infty}\left[|z(a_j^-)|+|z(a_j^+)|+|z(b_j^-)|+|z(b_j^+)|\right]\\+
\sum_{j=1}^{+\infty}\bigg|(-1)^j\gamma_j\frac{\sqrt{j}}{2}+z(b_j^-)-\kappa_j-z(b_j^+)\bigg|\\+
\sum_{j=1}^{+\infty}\bigg|\kappa_j+z(a_{j+1}^-)-(-1)^{j+1}\gamma_{j+1}\frac{\sqrt{j+1}}{2}-z(a_{j+1}^+)\bigg|\\
\geq \|u+v\|_{BV(\mathbb{R})}
\geq
\sum_{j=1}^{+\infty}|\gamma_j|\sqrt{j}-
2\sum_{j=1}^{+\infty}|k_j|.
\end{multline*}
\end{lem}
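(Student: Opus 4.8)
The plan is to prove the three assertions in order: the first two are elementary one-dimensional variation estimates, and the third is a bookkeeping consequence of them once $\R$ is split into the pieces on which $u+v$ is locally constant. Throughout I would work with the good representative of $z$, so that all one-sided limits $z(x^\pm)$ exist, and I may assume $\|u+v+z\|_{BV(\R)}<+\infty$, for otherwise every inequality is trivial. For the first assertion, a function of finite variation on $(-\infty,0)$ has a limit at $-\infty$ by the Cauchy criterion, and since $z\in L^2((-\infty,0))$ that limit must be $0$; hence $|Dz|((-\infty,0))\ge|z(0^-)-z(c)|$ for every continuity point $c<0$, and taking $c_n\to-\infty$ with $z(c_n)\to 0$ (such a sequence exists since $z\in L^2$) gives $|Dz|((-\infty,0))\ge|z(0^-)|$.

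For the second assertion, if $z\equiv 0$ a.e.\ on $I=(a,b)$ the two traces vanish and the inequality is trivial; otherwise $\int_I z=0$ forces the essential infimum of $z$ over $I$ to be negative and the essential supremum to be positive, so, off the countable jump set, there are continuity points where $z$ is strictly negative and where it is strictly positive. If $z(a^+)$ and $z(b^-)$ have opposite signs, or one of them vanishes, the claim follows at once from $|Dz|(I)\ge|z(a^+)-z(b^-)|$. If instead both are nonnegative, I would choose a continuity point $c\in(a,b)$ with $z(c)<0$ and estimate $|Dz|(I)\ge|Dz|((a,c))+|Dz|((c,b))\ge(z(a^+)-z(c))+(z(b^-)-z(c))\ge z(a^+)+z(b^-)$; the case of two nonpositive traces is symmetric, picking $c$ with $z(c)>0$.

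For the third assertion, decompose $\R$ into the disjoint pieces $(-\infty,a_1)$, the open intervals $(a_j,b_j)$ and $L_j=(b_j,a_{j+1})$, and the countable set $\{a_j\}_j\cup\{b_j\}_j$. On $(-\infty,a_1)$ both $u$ and $v$ vanish; on $(a_j,b_j)$ one has $u\equiv(-1)^j\gamma_j\tfrac{\sqrt j}{2}$ and $v\equiv 0$; on $L_j$ one has $u\equiv 0$ and $v\equiv\kappa_j$; thus on each of these open sets $w=u+v+z$ differs from $z$ by a constant, so $|Dw|$ coincides with $|Dz|$ there. Since $z\in P$ it has mean zero on every $I_j$ and every $L_j$, so the first two assertions give $|Dw|((-\infty,a_1))\ge|z(a_1^-)|$, $|Dw|((a_j,b_j))\ge|z(a_j^+)|+|z(b_j^-)|$ and $|Dw|(L_j)\ge|z(b_j^+)|+|z(a_{j+1}^-)|$; summing and reindexing, the interior contributions combine into $\sum_j(|z(a_j^-)|+|z(a_j^+)|+|z(b_j^-)|+|z(b_j^+)|)$. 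At each endpoint $w$ has an atom of size equal to its jump, which I would read off from the one-sided traces of $u$, $v$, $z$: $|-\gamma_1\tfrac{\sqrt1}{2}+z(a_1^+)-z(a_1^-)|$ at $a_1$, $|(-1)^j\gamma_j\tfrac{\sqrt j}{2}+z(b_j^-)-\kappa_j-z(b_j^+)|$ at $b_j$, and $|\kappa_j+z(a_{j+1}^-)-(-1)^{j+1}\gamma_{j+1}\tfrac{\sqrt{j+1}}{2}-z(a_{j+1}^+)|$ at $a_{j+1}$. Since $|Dw|$ is a nonnegative measure, $|Dw|(\R)$ is at least the sum of the contributions of these pairwise disjoint pieces, which is precisely the first displayed inequality.

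For the remaining two inequalities I would first absorb each pair of interior traces into the neighbouring jump by the triangle inequality, for instance $|(-1)^j\gamma_j\tfrac{\sqrt j}{2}+z(b_j^-)-\kappa_j-z(b_j^+)|+|z(b_j^-)|+|z(b_j^+)|\ge|(-1)^j\gamma_j\tfrac{\sqrt j}{2}-\kappa_j|$, which is the jump of the step function $u+v$ at $b_j$, and similarly at every $a_j$; since each trace $|z(\cdot^\pm)|$ is spent exactly once, the sum dominates the total of the jumps of $u+v$, which equals $\|u+v\|_{BV(\R)}$ because $u+v$ is locally constant off $\{a_j,b_j\}$. Finally, applying $|A-B|\ge|A|-|B|$ jump by jump, the jump of $u+v$ at $b_j$ is at least $\tfrac{\sqrt j}{2}|\gamma_j|-|\kappa_j|$, at $a_{j+1}$ at least $\tfrac{\sqrt{j+1}}{2}|\gamma_{j+1}|-|\kappa_j|$, and at $a_1$ exactly $\tfrac{\sqrt1}{2}|\gamma_1|$; summing, each $\sqrt j|\gamma_j|$ is counted with total weight one and each $|\kappa_j|$ with weight $-2$, which gives the last bound. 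The only delicate point, and the one I would check carefully, is this bookkeeping in the third assertion — in particular the exceptional role of $a_1$ (there is no $L_0$, and on $(-\infty,a_1)$ one only has finite variation, not mean zero) and the fact that each interior trace term is used exactly once when passing to $\|u+v\|_{BV(\R)}$.
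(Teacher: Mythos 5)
Your proof is correct and is exactly the argument the authors leave implicit (the paper states the lemma "whose proof is immediate" and gives no proof): one-dimensional variation estimates on each piece $(-\infty,a_1)$, $(a_j,b_j)$, $L_j$ where $w$ differs from $z$ by a constant, plus the jump contributions at the endpoints, followed by the triangle-inequality bookkeeping in which each trace $|z(a_j^{\pm})|$, $|z(b_j^{\pm})|$ is spent exactly once and each $|\kappa_j|$ twice. The case analysis in the second assertion (using a negative-valued continuity point when both traces are nonnegative) and the special treatment of $(-\infty,a_1)$ via the first assertion are handled correctly.
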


We define $\Lambda:L^2(\mathbb{R})\to L^2(\mathbb{R})$ as follows. We define, for any $j\in \mathbb{N}$,
$\Lambda(e_j)$
as in \eqref{2operatorj=1} and \eqref{2operatorj>1}, hence
\begin{equation}\label{Lambdadefinitionthird}
\Lambda(u)=\sum_{j=1}^{+\infty}\gamma_j\Lambda(e_j)\quad\text{for any }u=\sum_{j=1}^{+\infty}\gamma_je_j\text{ with }\gamma\in l_2.
\end{equation}
Instead, we define
$$\Lambda(v+z)=v+z\quad\text{for any }v\in K\text{ and }z\in P,$$
that is,
$\Lambda|_{\tilde{H}^{\perp}}$ is the identity.

\begin{lem}\label{solvability}
We have that $\Lambda:L^2(\mathbb{R})\to L^2(\mathbb{R})$ is a bounded linear injective operator. Moreover, the problem
\begin{equation}\label{ROFmodified}
\min\{\lambda\|\tilde{\Lambda}-\Lambda(w)\|^2_{L^2(\mathbb{R})}+\|w\|_{BV(\mathbb{R})}:\ w
\in L^2(\mathbb{R})\}
\end{equation}
admits a unique solution for any $\lambda>0$ and any $\tilde{\Lambda}\in L^2(\mathbb{R})$.
\end{lem}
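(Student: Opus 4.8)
The plan is to check the two structural properties of $\Lambda$ and then to produce a minimizer of \eqref{ROFmodified} by the direct method, reading off uniqueness from Remark~\ref{uniquenessremark}. First, I would observe that $\Lambda$ is block--diagonal with respect to the orthogonal splitting $L^2(\mathbb{R})=\tilde H\oplus\tilde H^{\perp}$, $\tilde H^{\perp}=K\oplus P$: by \eqref{2operatorj>1}, \eqref{2operatorj=1} and \eqref{Lambdadefinitionthird} the operator leaves $\tilde H$ invariant, and under the map $U\colon l_2\to\tilde H$, $U(\gamma)=\sum_j\gamma_je_j$, which is an isometry both for the $l_2$--$L^2$ and for the $F_2$--$BV(\mathbb{R})$ norms, one has $\Lambda|_{\tilde H}=U\Lambda_2U^{-1}$, where $\Lambda_2$ is the operator \eqref{Lambdadefinitionsecond}; since, arguing exactly as in Lemma~\ref{injlemma}, $\Lambda_2$ is bounded and injective on $l_2$, so is $\Lambda|_{\tilde H}$, while on $\tilde H^{\perp}$ the operator $\Lambda$ is the identity. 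Boundedness on $L^2(\mathbb{R})$ is then clear, and if $w=w_1+w_2$ with $w_1\in\tilde H$, $w_2\in\tilde H^{\perp}$, then $\Lambda(w)=\Lambda(w_1)+w_2$ is again split as $\tilde H\oplus\tilde H^{\perp}$, so $\Lambda(w)=0$ forces $\Lambda(w_1)=0$ and $w_2=0$, hence $w=0$.

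For existence, write $J(w)=\lambda\|\tilde\Lambda-\Lambda(w)\|_{L^2(\mathbb{R})}^2+\|w\|_{BV(\mathbb{R})}$ and take a minimizing sequence $\{w_n\}$ with $J(w_n)\le J(0)=\lambda\|\tilde\Lambda\|_{L^2(\mathbb{R})}^2$, so that $\|w_n\|_{BV(\mathbb{R})}\le\lambda\|\tilde\Lambda\|_{L^2(\mathbb{R})}^2$ and $\|\Lambda(w_n)\|_{L^2(\mathbb{R})}\le 2\|\tilde\Lambda\|_{L^2(\mathbb{R})}$. Decomposing $w_n=u_n+v_n+z_n$ with $u_n=\sum_j\gamma_j^{(n)}e_j\in\tilde H$, $v_n=\sum_j\kappa_j^{(n)}\chi_{L_j}\in K$, $z_n\in P$, and using that $\Lambda(w_n)=\Lambda(u_n)+v_n+z_n$ is an orthogonal sum, I get $\|v_n\|_{L^2(\mathbb{R})},\|z_n\|_{L^2(\mathbb{R})}\le 2\|\tilde\Lambda\|_{L^2(\mathbb{R})}$. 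Since $|L_j|=j^2$, so that $\|v_n\|_{L^2(\mathbb{R})}^2=\sum_j j^2|\kappa_j^{(n)}|^2$, Cauchy--Schwarz yields the uniform bound $\sum_j|\kappa_j^{(n)}|\le(\sum_j j^{-2})^{1/2}\|v_n\|_{L^2(\mathbb{R})}$. Lemma~\ref{BVdecomposition} then gives $\sum_j\sqrt{j}\,|\gamma_j^{(n)}|\le\|u_n+v_n\|_{BV(\mathbb{R})}+2\sum_j|\kappa_j^{(n)}|\le\|w_n\|_{BV(\mathbb{R})}+2\sum_j|\kappa_j^{(n)}|$, uniformly bounded; moreover $|\gamma_j|\le\sqrt j\,|\gamma_j|\le\sum_i\sqrt i\,|\gamma_i|$ implies $\|\gamma\|_{l_2}^2\le(\sup_j|\gamma_j|)\sum_j\sqrt j\,|\gamma_j|\le\big(\sum_j\sqrt j\,|\gamma_j|\big)^2$, i.e.\ $F_2$ embeds continuously into $l_2$, so $\|u_n\|_{L^2(\mathbb{R})}=\|\gamma^{(n)}\|_{l_2}\le\sum_j\sqrt j\,|\gamma_j^{(n)}|$ is bounded. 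Hence $\{w_n\}$ is bounded both in $L^2(\mathbb{R})$ and in $BV(\mathbb{R})$.

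Since $BV((-T,T))$ embeds compactly into $L^1((-T,T))$ for every $T$, a diagonal extraction produces a subsequence (not relabelled) with $w_n\to w$ in $L^1_{\mathrm{loc}}(\mathbb{R})$ and a.e., which, being bounded in $L^2(\mathbb{R})$, also converges weakly to $w$ in $L^2(\mathbb{R})$ (the two limits coincide by testing against $C_c(\mathbb{R})$). Then $\Lambda(w_n)\rightharpoonup\Lambda(w)$ weakly in $L^2(\mathbb{R})$, so $\|\tilde\Lambda-\Lambda(w)\|_{L^2(\mathbb{R})}^2\le\liminf_n\|\tilde\Lambda-\Lambda(w_n)\|_{L^2(\mathbb{R})}^2$ by weak lower semicontinuity of the norm, while lower semicontinuity of the total variation under $L^1$--convergence applied on each $(-T,T)$ and then letting $T\to+\infty$ gives $\|w\|_{BV(\mathbb{R})}=|Dw|(\mathbb{R})\le\liminf_n\|w_n\|_{BV(\mathbb{R})}$; since $w\in L^2(\mathbb{R})$, $w$ lies in the homogeneous $BV(\mathbb{R})$ space. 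Adding the two inequalities gives $J(w)\le\liminf_n J(w_n)$, so $w$ is a minimizer, and by Remark~\ref{uniquenessremark} — $R=\|\cdot\|_{BV(\mathbb{R})}$ is convex on $L^2(\mathbb{R})$ and $\Lambda$ is injective on $L^2(\mathbb{R})$, hence on $\tilde E$ — it is the unique one.

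The only genuinely delicate step is the one carried out in the second paragraph, namely producing a minimizing sequence bounded in $L^2(\mathbb{R})$: the $BV(\mathbb{R})$--bound does not by itself control the $L^2$--norm on the whole line, and one must exploit the specific geometry, i.e.\ the choice $|L_j|=j^2$ (which makes $\sum_j|\kappa_j^{(n)}|$ controllable by $\|v_n\|_{L^2}$) together with the continuous embedding $F_2\hookrightarrow l_2$ (which converts the $F_2$--bound on $u_n$ into an $L^2$--bound). Once this is in place, the absence of a compact embedding $BV(\mathbb{R})\cap L^2(\mathbb{R})\hookrightarrow L^2(\mathbb{R})$ is bypassed, as usual, by combining weak $L^2$ with strong $L^1_{\mathrm{loc}}$ convergence.
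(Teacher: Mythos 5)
Your proposal is correct and follows essentially the same route as the paper: the block structure of $\Lambda$ with respect to $L^2(\mathbb{R})=\tilde H\oplus K\oplus P$, the $L^2$-bound on the minimizing sequence obtained from Lemma~\ref{BVdecomposition} together with the Cauchy--Schwarz control of $\sum_j|\kappa_j|$ via $|L_j|=j^2$ and the embedding $F_2\hookrightarrow l_2$, and uniqueness from Remark~\ref{uniquenessremark}. The only cosmetic difference is that you pass through strong $L^1_{\mathrm{loc}}$ convergence to get lower semicontinuity of the total variation, whereas the paper invokes it directly along weakly $L^2$-convergent subsequences; both are valid.
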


\begin{proof} First of all, we note that $\Lambda(u)\in \tilde{H}$ for any $u\in\tilde{H}$ and it coincides with the operator $\Lambda$ defined in our second variant, therefore
$\Lambda|_{\tilde{H}}:\tilde{H}\to \tilde{H}\subset L^2(\mathbb{R})$ is a bounded linear injective operator.
Since $\Lambda(\tilde{v})=\tilde{v}$
for any $\tilde{v}\in \tilde{H}^{\perp}$, we immediately conclude that
$\Lambda$ is a bounded linear and injective operator.

Hence, only the solvability of \eqref{ROFmodified} needs a proof. This is not completely trivial. Let $\tilde{\Lambda}=\tilde{u}+\tilde{v}+\tilde{z}$ and
let $w^n=u^n+v^n+z^n$, $n\in\mathbb{N}$, be a minimizing sequence. We note that the minimum problem can be written as
\begin{multline*}
\min\Big\{\lambda\|\tilde{u}-\Lambda(u)\|^2_{L^2(\mathbb{R})}+\lambda\|\tilde{v}-v\|^2_{L^2(\mathbb{R})}+\lambda\|\tilde{z}-z\|^2_{L^2(\mathbb{R})}
\\+\|u+v+z\|_{BV(\mathbb{R})}:\ w=u+v+z\in L^2(\mathbb{R})\Big\}
\end{multline*}
hence we can assume that for some constant $C$
$$\|u^n+v^n+z^n\|_{BV(\mathbb{R})},\,
\|v^n\|_{L^2(\mathbb{R})}+\|z^n\|_{L^2(\mathbb{R})}\leq C\quad\text{for any }n\in\mathbb{N}.$$

Hence, by Lemma~\ref{BVdecomposition},
$$C\geq \|u^n+v^n+z^n\|_{BV(\mathbb{R})}\geq \|u^n+v^n\|_{BV(\mathbb{R})}\geq \sum_{j=1}^{+\infty}|\gamma^n_j|\sqrt{j}-
2\sum_{j=1}^{+\infty}|k^n_j|.
$$
We have
\begin{multline*}
\sum_{j=1}^{+\infty}|k^n_j|= \sum_{j=1}^{+\infty}|k^n_j|\frac{j}{j}\\\leq \left(\sum_{j=1}^{+\infty}|k^n_j|^2j^{2}\right)^{1/2}\left(\sum_{j=1}^{+\infty}\frac{1}{j^{2}}\right)^{1/2}=
C_1\|v_n\|_{L^2(\mathbb{R})}\leq C_1C.
\end{multline*}
We conclude that
$$\|u_n\|_{L^2(\mathbb{R})}=\|\gamma\|_{l_2}\leq \|\gamma\|_{l_1}\leq \sum_{j=1}^{+\infty}|\gamma^n_j|\sqrt{j}
\leq C+2C_1C.$$
Therefore, for some constant $\tilde{C}$ we have for any $n\in\mathbb{N}$
$$\|w^n\|_{L^2(\mathbb{R})},\,\|w^n\|_{BV(\mathbb{R})} \leq \tilde{C}.$$
Then we argue like for the classical Rudin-Osher-Fatemi model. Namely, we can assume passing to subsequences that
$u^n$, $v^n$ and $z^n$ weakly converge in $L^2(\mathbb{R})$ to $u$, $v$ and $z$ respectively. By semicontinuity
$$\|\tilde{v}-v\|_{L^2(\mathbb{R})}\leq \liminf_n\|\tilde{v}-v^n\|_{L^2(\mathbb{R})}\quad\text{and}\quad
\|\tilde{z}-z\|_{L^2(\mathbb{R})}\leq \liminf_n\|\tilde{z}-z^n\|_{L^2(\mathbb{R})}$$
and
$$\|u+v+w\|_{BV(\mathbb{R})}\leq
\liminf_n \|u^n+v^n+w^n\|_{BV(\mathbb{R})}.$$
Since $\Lambda$ is linear and continuous, $u^n$ weakly converging to $u$ implies that $\Lambda(u^n)$ weakly converges to $\Lambda(u)$ and we immediately conclude that
$$\|\tilde{u}-\Lambda(u)\|^2_{L^2(\mathbb{R})}\leq\liminf_n\|\tilde{u}-\Lambda(u^n)\|^2_{L^2(\mathbb{R})}.$$
Hence $w=u+v+z$ is a minimizer. Uniqueness follows by Remark~\ref{uniquenessremark} and the proof is concluded.\end{proof}

\begin{oss}If $\tilde{z}=0$, that is, $\tilde{\Lambda}=\tilde{u}+\tilde{v}$, then, $w$, the solution to 
\eqref{ROFmodified}, is such that $z=0$, that is, $w=u+v$ where $u+v$ solve
\begin{multline}\label{ROFmodifiedbis}
\min\Big\{\lambda\|\tilde{u}-\Lambda(u)\|^2_{L^2(\mathbb{R})}+\lambda\|\tilde{v}-v\|^2_{L^2(\mathbb{R})}
+\|u+v\|_{BV(\mathbb{R})}:\\ w=u+v\in \tilde{H}+K\Big\}.
\end{multline}
\end{oss}

For any $j\geq 0$, let
\begin{equation}\label{ujdefthird}
u_j=\frac{b}{\sqrt{j+2}}e_{j+2}
\end{equation}
and, for any $n\geq 0$, let
\begin{equation}\label{sigmandefthird}
\sigma_n=\sum_{j=0}^nu_j.
\end{equation}

\begin{oss}
We obtain that $\displaystyle \|\sigma_n\|^2_{L^2(\mathbb{R})}=b^2\sum_{j=2}^{n+2}\frac1j$, hence $$\displaystyle \lim_n\|\sigma_n\|_{L^2(\mathbb{R})}=+\infty.$$
Moreover, $\displaystyle \|\sigma_n\|_{L^1(\mathbb{R})}=2b\sum_{j=2}^{n+2}\frac1{j}$ and
$\displaystyle \|\sigma_n\|_{BV(\mathbb{R})}=b(n+1)$ thus we also have
$$
\lim_n\|\sigma_n\|_{L^1(\mathbb{R})}=+\infty\quad\text{and}\quad \lim_n\|\sigma_n\|_{BV(\mathbb{R})}=+\infty.$$
\end{oss}

We fix $\tilde{\Lambda}=\Lambda(e_1)$. Our aim is to show that the following result holds. 
\begin{teo}\label{examplethmthird}
For any $M\geq 2$ and $\alpha_0>0$, if $k=5$,
there exist suitable constants $c_0>0$, $b>0$, $0<\delta<1/k$ such that the sequence $\{\sigma_n\}_{n\geq 0}$ as in \eqref{ujdefthird}, \eqref{sigmandefthird} coincides with the
multiscale sequence $\{\sigma_n\}_{n\geq 0}$ given by \eqref{regularizedpbm}, \eqref{inductiveconstr} and \eqref{partialsum1}.
\end{teo}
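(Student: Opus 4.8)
The plan is to prove the statement by induction on $n$, showing that the partial sum $\sigma_n$ of \eqref{ujdefthird}, \eqref{sigmandefthird} — which lies in $\tilde H$ — is precisely the $n$-th iterate of the multiscale procedure \eqref{regularizedpbm}, \eqref{inductiveconstr}, \eqref{partialsum1}. Since $\tilde\Lambda=\Lambda(e_1)\in\tilde H$ and $\Lambda$ maps $\tilde H$ into $\tilde H$ while acting as the identity on $\tilde H^{\perp}=K\oplus P$, if $\sigma_{n-1}\in\tilde H$ then the step-$n$ datum $g_n:=\tilde\Lambda-\Lambda(\sigma_{n-1})$ lies in $\tilde H$; observing that $d(\tilde\Lambda,\Lambda(\sigma_{n-1}+u))=\|g_n-\Lambda(u)\|_{L^2(\R)}$, the step-$n$ problem is $\min_u\{\lambda_n\|g_n-\Lambda(u)\|_{L^2(\R)}^2+\|u\|_{BV(\R)}\}$, which by Lemma~\ref{solvability} has a unique solution, so everything reduces to checking that this solution equals $u_n=\tfrac{b}{\sqrt{n+2}}e_{n+2}$.

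To identify the minimizer I would apply the Meyer-type characterization of Proposition~\ref{charprop} (here $X=L^2(\R)$, $F=BV(\R)$, $R=\|\cdot\|_{BV(\R)}$, datum $g_n$): writing $r_n=g_n-\Lambda(u_n)=\tilde\Lambda-\Lambda(\sigma_n)\in\tilde H$, the function $u_n$ is the minimizer if and only if $\langle r_n,\Lambda(u_n)\rangle_{L^2(\R)}=\tfrac1{2\lambda_n}\|u_n\|_{BV(\R)}$ and $\|r_n^{\ast}\circ\Lambda\|_{\ast}\le\tfrac1{2\lambda_n}$, the $\ast$-norm being the dual of $\|\cdot\|_{BV(\R)}$. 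The equality condition is the easy one: since $\Lambda(\sigma_n),\Lambda(e_1),\Lambda(e_{n+2})$ all lie in $\tilde H$ and $\{e_j\}$ is orthonormal in $L^2(\R)$, the relevant inner products coincide with those of the second counterexample, and the condition reduces to $A_{n_1,n_1}=\tfrac1{2\lambda_n}$ with $n_1=n+2$, i.e. \eqref{crucialclaimprop}, which holds by the choice of $c_0$ made in Claim~\ref{crucialclaim}.

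The substance is the dual-norm inequality. Decomposing $h\in BV(\R)\cap L^2(\R)$ as $h=\sum_j\gamma_je_j+v+z$ with $v\in K$, $z\in P$, one has $\langle r_n,\Lambda(h)\rangle_{L^2(\R)}=\sum_j\beta_jA_{j,n_1}$ with $\beta_j=\sqrt j\,\gamma_j$ (it depends only on the $\tilde H$-part of $h$, because $r_n\in\tilde H$ and $\Lambda(v+z)=v+z\in\tilde H^{\perp}$), while Lemma~\ref{BVdecomposition} yields
\[
\|h\|_{BV(\R)}\ \ge\ N(\beta):=\tfrac12|\beta_1|+\tfrac12\sum_{j\ge1}|\beta_j+\beta_{j+1}|,
\]
the total variation of $\sum_j\gamma_je_j$ once the gaps $L_j$ are filled optimally; note $N(\beta)\le\|\gamma\|_{F_2}$. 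Hence it suffices to prove $\sum_j\beta_jA_{j,n_1}\le\tfrac1{2\lambda_n}N(\beta)$ for every $\beta\in l_1$. Inverting the bidiagonal map $\beta\mapsto(\beta_1,\beta_1+\beta_2,\beta_2+\beta_3,\dots)$, whose range has $l_1$-closure the hyperplane $\{s:\sum_i(-1)^is_i=0\}$, turns the left-hand side into $\sum_i s_iT_i$ and $N$ into $\tfrac12\|s\|_{l_1}$, which gives the formula
\[
\|r_n^{\ast}\circ\Lambda\|_{\ast}=2\inf_{c\in\R}\ \sup_{m\ge0}\bigl|T_m-c(-1)^m\bigr|,\qquad T_m:=\sum_{l\ge0}(-1)^lA_{m+1+l,\,n_1},
\]
so that one must verify $\inf_c\sup_m|T_m-c(-1)^m|\le\tfrac1{4\lambda_n}=\tfrac12A_{n_1,n_1}$.

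It remains to establish this estimate from the explicit values and the shape of $(A_{j,n_1})_j$ proved in Claim~\ref{crucialclaim}. The key tool is the recursion $T_m+T_{m+1}=A_{m+1,n_1}$; in particular $T_{n_1-1}+T_{n_1}=A_{n_1,n_1}$, so choosing $c$ with $c(-1)^{n_1}=T_{n_1}-\tfrac12A_{n_1,n_1}$ forces $|T_{n_1-1}-c(-1)^{n_1-1}|=|T_{n_1}-c(-1)^{n_1}|=\tfrac12A_{n_1,n_1}$ at the two critical indices, and one is left to bound $|T_m-c(-1)^m|$ for all other $m$. This is controlled by the vanishing $A_{j,n_1}=0$ for $2\le j\le n_1-k$ together with the smallness of $A_{1,n_1}$ (again a consequence of the choice of $b$) at the low indices, by the strict monotonicity $A_{j,n_1}<A_{j-1,n_1}$ for $j>n_1$ and the geometric ratio $A_{j+1,n_1}=M^{-1}A_{j,n_1}\le\tfrac12A_{j,n_1}$ for $j\ge n_1+k+1$ at the tail, and by the explicit comparisons between consecutive $A_{j,n_1}$ near $j=n_1$. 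I expect this to be the main obstacle, since the estimate is sharp — equality is attained at $m=n_1-1,n_1$ — and it goes through precisely because the alternating signs in $e_j=(-1)^j\tfrac{\sqrt j}{2}\chi_{I_j}$ make $\sigma_n$ a perfectly oscillating ``comb'' whose total variation is not lowered by filling the gaps, so that the passage from the discrete counterexample of Theorem~\ref{examplethmsecond} to the continuous one costs nothing; as in the proof of Theorem~\ref{examplethm}, the choice $k=5$ and the inequality $M^{5/2}>\sqrt5+\sqrt6$ are what make the numbers close enough. Once the estimate is in hand, Proposition~\ref{charprop} identifies $u_n$ as the unique step-$n$ minimizer and the induction closes.
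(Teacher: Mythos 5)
Your route is genuinely different from the paper's. The paper proves the statement by computing, for each $n$, the one--sided directional derivative of $F_n$ at $u_n$ in every direction $w=u+v\in \tilde H+K$, optimizing the gap--filling part $v=\tilde v(u)$, and then running a rearrangement procedure (Lemma~\ref{monotonelemma} and the subsequent monotonization of the coefficients $|\alpha_j|$, passed to the limit $u^{\infty}$) whose nonnegativity conclusion rests on \eqref{A_jincrea}--\eqref{A_jdecrea}. You instead invoke the Meyer characterization (Proposition~\ref{charprop}) and compute the dual norm $\|r_n^{\ast}\circ\Lambda\|_{\ast}$ in closed form by $l_1$--$l_\infty$ duality on the hyperplane $\sum_i(-1)^is_i=0$. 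Your reductions are all correct: the inner product sees only the $\tilde H$--component of $h$ because $r_n\in\tilde H$ and $\Lambda$ is the identity on $\tilde H^{\perp}$; the lower bound $\|h\|_{BV(\R)}\geq N(\beta)$ is exactly what Lemma~\ref{BVdecomposition} plus optimal gap--filling gives; the Abel summation $\sum_j\beta_jA_{j,n_1}=\sum_i s_iT_i$ is legitimate for $\beta\in l_1$ because the $A_{j,n_1}$ have a geometric tail; and the equality condition in \eqref{char} reduces to $A_{n_1,n_1}=1/(2\lambda_n)$, i.e.\ \eqref{crucialclaimprop}. If it closes, your argument is cleaner than the paper's, since it replaces the rearrangement machinery by a single scalar inequality, and it makes transparent why the continuous example costs nothing beyond Theorem~\ref{examplethmsecond}.

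The gap is that this scalar inequality --- with $D_m=T_m-c(-1)^m$ and your choice of $c$, the bound $|D_m|\leq \tfrac12A_{n_1,n_1}$ for all $m\geq 0$ --- is exactly where the content of the counterexample sits, and you leave it as a sketch; moreover it does not follow from the inequalities recorded in Claim~\ref{crucialclaim} alone. The recursion $D_m+D_{m+1}=A_{m+1,n_1}$ together with $D_{n_1-1}=D_{n_1}=\tfrac12A_{n_1,n_1}$ and the telescoping identity $D_{n_1+s}=(-1)^s\big(\tfrac12A_{n_1,n_1}-\sum_{i=1}^{s}(-1)^{i+1}A_{n_1+i,n_1}\big)$ settles all $m\geq n_1$, since \eqref{A_jdecrea} puts the inner alternating sum in $[0,A_{n_1+1,n_1}]\subset[0,A_{n_1,n_1}]$; the analogous downward identity settles $j(n_1)-1\leq m\leq n_1-2$ via \eqref{A_jincrea}, and $|c|<\tfrac12A_{n_1,n_1}$ is automatic. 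But for $m\leq j(n_1)-2$ the zeros $A_{j,n_1}=0$ propagate $\pm D_{j(n_1)-1}$ down to $D_1$, and $D_0=A_{1,n_1}\mp D_{j(n_1)-1}$; bounding this in both sign configurations requires, besides $A_{1,n_1}+A_{j(n_1),n_1}<A_{j(n_1)+1,n_1}$, an inequality of the type $A_{1,n_1}\leq A_{n_1,n_1}-A_{n_1-1,n_1}$. This is true --- the explicit formulas give $A_{n_1,n_1}-A_{n_1-1,n_1}=\frac{c_0\delta}{bM^{n_1}}\frac{1-\delta}{M^{k-1}}$, while the choice of $b$ gives $A_{1,n_1}<\frac{c_0\delta}{bM^{n_1}}\frac{1-k\delta}{M^{k-1}}$ --- but it is not among the statements of \eqref{A_jincrea} and must be extracted from the construction. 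So the architecture is sound and the theorem is reachable this way, but the decisive estimate still has to be carried out index by index, with one more inequality than you budgeted for.
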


The remaining part of this section is devoted to the proof of Theorem~\ref{examplethmthird}.
We argue by induction. We call $\tilde{\Lambda}=\tilde{\Lambda}_{-1}$ and, for any $n\geq 0$,
$\tilde{\Lambda}_n=\tilde{\Lambda}-\Lambda(\sigma_n)$, where $\sigma_n$ is as in \eqref{sigmandefthird}. For any $n\geq 0$, we show that $u_n$ as in \eqref{ujdefthird}
is the unique minimizer for
$$\min\big\{F_n(w):\ w=u+v\in \tilde{H}+K\big\}$$
where $F_n(w)=\lambda_n\|\tilde{\Lambda}_{n-1}-\Lambda(w)\|^2_{L^2(\mathbb{R})}+\|w\|_{BV(\mathbb{R})}$.
By convexity, a necessary and sufficient condition is that the following kind of directional derivative is nonnegative for any direction, namely
$$\frac{\partial F_n}{\partial w}(u_n)=\limsup_{\varepsilon\to 0^+}\frac{F_n(u_n+\varepsilon w)-F_n(u_n)}{\varepsilon}\geq 0\quad \text{for any }w\in \tilde{H}+K.$$
Let $$w=u+v=\sum_{j=1}^{+\infty}\alpha_j\chi_{I_j}+\sum_{j=1}^{+\infty}\beta_j\chi_{L_j}.$$
Since $w\in L^2(\mathbb{R})$, we have that
\begin{equation}\label{liminf}
\sum_{j=1}^{+\infty}\frac{|\alpha_j|^2}{j}<+\infty,\text{ hence }\liminf_{j\to+\infty}|\alpha_j|=0.
\end{equation}
Then, with $n_1=n+2$, since $\|u_n\|_{BV(\R)}=b$ for any $n\geq 0$,
\begin{multline*}
F_n(u_n+\varepsilon w)-F_n(u_n)\\=-2\lambda_n\varepsilon\langle\tilde{\Lambda}_{n-1}-\Lambda(u_n),\Lambda(u)\rangle+\varepsilon^2\|\Lambda(w)\|_{L^2(\mathbb{R})}^2+\|u_n+\varepsilon w\|_{BV(\mathbb{R})}-b\\
=-2\lambda_n\varepsilon\langle\tilde{\Lambda}-\Lambda(\sigma_n),\Lambda(u)\rangle+\varepsilon^2\|\Lambda(w)\|_{L^2(\mathbb{R})}^2+\|u_n+\varepsilon w\|_{BV(\mathbb{R})}-b
\\=-4\lambda_n\varepsilon\sum_{j=1}^{+\infty}(-1)^{j}A_{j,n_1}\alpha_j+\varepsilon^2\left(\|\Lambda(u)\|_{L^2(\mathbb{R})}^2
+
\|v\|_{L^2(\mathbb{R})}^2\right)\\+\|u_n+\varepsilon w\|_{BV(\mathbb{R})}-b.
\end{multline*}
Hence, we can equivalently define
$$\frac{\partial F_n}{\partial w}(u_n)=\limsup_{\varepsilon\to 0^+}\frac{1}{\varepsilon}\bigg(-4\lambda_n\varepsilon\sum_{j=1}^{+\infty}(-1)^jA_{j,n_1}\alpha_j
+\|u_n+\varepsilon w\|_{BV(\mathbb{R})}-b\bigg)
$$
which is convenient since from now on we can drop the assumption that $v\in L^2(\mathbb{R})$ and keep only the one that $w\in BV(\mathbb{R})$.
This implies in particular that at least we have $\{\alpha_j\}_{j\in\N}\in l_{\infty}$ and $\{\beta_j\}_{j\in\N}\in l_{\infty}$.
We recall that $u_n=(-1)^{n_1}\dfrac{b}{2}\chi_{I_{n_1}}$ and we can assume that, for $0<\varepsilon\leq \varepsilon_0$,
$$\frac b2>2\varepsilon\max_{j\in\mathbb{N}}\{|\alpha_j|,|\beta_j|\}.$$
From now on, we always assume that $0<\varepsilon\leq \varepsilon_0$.
We have that, with the new definition,
$$\frac{\partial F_n}{\partial w}(u_n)\geq \frac{\partial F_n}{\partial \tilde{w}}(u_n)$$
for $\tilde{w}=\tilde{w}(u)=u+\tilde{v}(u)$ where
$\displaystyle \tilde{v}(u)=\sum_{j=1}^{+\infty}\tilde{\beta}_j\chi_{L_j}$ is chosen as follows.
First, since $$b/2-\varepsilon |\alpha_{n_1}|>\varepsilon\max_{j\in\mathbb{N}}\{|\alpha_j|,|\beta_j|\},$$
we set
$$\tilde{\beta}_{n_1-1}=\max\{\alpha_{n_1-1},0\}\quad\text{and}\quad\tilde{\beta}_{n_1}=\max\{\alpha_{n_1+1},0\},\quad\text{for }n_1\text{ even}$$
and
$$\tilde{\beta}_{n_1-1}=\min\{\alpha_{n_1-1},0\}\quad\text{and}\quad\tilde{\beta}_{n_1}=\min\{\alpha_{n_1+1},0\},\quad\text{for }n_1\text{ odd}.$$
For all remaining $j\in\mathbb{N}\backslash\{n_1-1,n_1\}$, the rule is the following. If
$\alpha_j\alpha_{j+1}\geq 0$, we pick $|\tilde{\beta}_j|=\min\{|\alpha_j|,|\alpha_{j+1}|\}$, with sign equal to the sign of the one, if any, which is not $0$. 
If $\alpha_j\alpha_{j+1}<0$, we pick $\tilde{\beta}_j=0$. In fact such a $\tilde{v}(u)$ has indeed the effect of minimizing $\|u_n+\varepsilon (u+v)\|_{BV(\mathbb{R})}$ for all
$\displaystyle v=\sum_{j=1}^{+\infty}\beta_j\chi_{L_j}$.
Hence, without loss of generality, in what follows we assume that $v=\tilde{v}(u)$ and note that we can drop completely the dependence on $v$ in the directional derivative, namely we call, with a slight abuse of notation
\begin{equation}\label{dirdernew}
\frac{\partial F_n}{\partial u}(u_n)=-4\lambda_n\sum_{j=1}^{+\infty}(-1)^jA_{j,n_1}\alpha_j
+\limsup_{\varepsilon\to 0^+}\frac{\|u_n+\varepsilon (u+\tilde{v}(u))\|_{BV(\mathbb{R})}-b}{\varepsilon}.
\end{equation}
Our aim is to show that for any $u\in L^2(\mathbb{R})$ with $u+\tilde{v}(u)\in BV(\mathbb{R})$, we have that
$\displaystyle \frac{\partial F_n}{\partial u}(u_n)$ defined as in \eqref{dirdernew} is nonnegative.

We begin by considering the parameter $\alpha_{n_1}$. If we consider $\hat{u}=u-\alpha_{n_1}\chi_{I_{n_1}}$
then
$\displaystyle \frac{\partial F_n}{\partial u}(u_n)= \frac{\partial F_n}{\partial \hat{u}}(u_n)$. In fact, first we note that
$\tilde{v}(u)=\tilde{v}(\hat{u})$. Then
$$-4\lambda_n\varepsilon(-1)^{n_1}A_{n_1,n_1}\alpha_{n_1}=(-1)^{n_1+1}2\varepsilon\alpha_{n_1}
$$
and $\|u_n+\varepsilon (u+\tilde{v}(u))\|_{BV(\mathbb{R})}=\|u_n+\varepsilon(\hat{u}+\tilde{v}(\hat{u}))\|_{BV(\mathbb{R})}+(-1)^{n_1}2\varepsilon\alpha_{n_1}$.
Therefore, in what follows we further assume without loss of generality that $u$ is such that $\alpha_{n_1}=0$.

We use the convention that $\tilde{\beta}_0=\tilde{\beta}_{\infty}=0$.
We consider four different cases.
\begin{enumerate}[i\textnormal{)}]
\item\label{case1}
 Let $n_1< j_0\leq j_1\leq +\infty $ be such that $\alpha_j$, for $j_0\leq j\leq  j_1$ or $j_0\leq j$ if $j_1=+\infty$, are either all positive or all negative and $\tilde{\beta}_{j_0-1}=\tilde{\beta}_{j_1}=0$.
\item\label{case2} Let $1\leq j_0\leq j_1<n_1$ be such that $\alpha_j$, for $j_0\leq j\leq  j_1$, are either all positive or all negative and $\tilde{\beta}_{j_0-1}=\tilde{\beta}_{j_1}=0$.
\item\label{case3} Let $j_0=n_1+1\leq j_1\leq+\infty$ and let us assume that for any $j_0\leq j\leq j_1$ or $j_0\leq j$ if $j_1=+\infty$, $(-1)^{n_1}\alpha_j>0$ and $\tilde{\beta}_{j_1}=0$.
\item\label{case4} Let $1\leq j_0\leq j_1=n_1-1$ and let us assume that for any $j_0\leq j\leq j_1$, $(-1)^{n_1}\alpha_j>0$ and $\tilde{\beta}_{j_0-1}=0$.
\end{enumerate}

We begin with the following lemma.
\begin{lem}\label{monotonelemma}
In all cases, we call $\hat{u}$ the function which is obtained from $u$ by setting $\alpha_j=0$ for any $j_0\leq j \leq j_1$
or $j_0\leq j$ if $j_1=+\infty$.

In cases \textnormal{\ref{case1})} and \textnormal{\ref{case3})}, assume $|\alpha_j|\geq |\alpha_{j+1}|$ for any $j_0\leq j<j_1$.

In cases \textnormal{\ref{case2})} and \textnormal{\ref{case4})}, assume $|\alpha_j|\leq |\alpha_{j+1}|$ for any $j_0\leq j<j_1$.

Then
$\displaystyle \frac{\partial F_n}{\partial u}(u_n)\geq \frac{\partial F_n}{\partial \hat{u}}(u_n)$,
directional derivatives defined as in \eqref{dirdernew}.
\end{lem}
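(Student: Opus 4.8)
The plan is to make everything explicit and then reduce to an alternating‑series estimate. First I would write out $\|u_n+\varepsilon(u+\tilde v(u))\|_{BV(\R)}$: the function $u_n+\varepsilon(u+\tilde v(u))$ is a step function, equal to $(-1)^{n_1}b/2$ on $I_{n_1}$ (using the reduction to $\alpha_{n_1}=0$ already in force), to $\varepsilon\alpha_j$ on $I_j$ for $j\ne n_1$, to $\varepsilon\tilde\beta_j$ on $L_j$, and to $0$ to the left of $I_1$, so its total variation is the sum of the absolute jumps at the endpoints $a_j,\,b_j$. For $0<\varepsilon\le\varepsilon_0$ the two jumps bordering $I_{n_1}$ have size $b/2+O(\varepsilon)$ while all the others are $O(\varepsilon)$, hence $\|u_n+\varepsilon(u+\tilde v(u))\|_{BV(\R)}-b$ is linear in $\varepsilon$ and the $\limsup$ in \eqref{dirdernew} equals the explicit quantity
\[
B(u)=|\alpha_1|+\sum_{j\ne n_1}|\tilde\beta_j-\alpha_j|+\sum_{j\ne n_1-1}|\alpha_{j+1}-\tilde\beta_j|-(-1)^{n_1}(\tilde\beta_{n_1-1}+\tilde\beta_{n_1}),
\]
with the conventions $\tilde\beta_0=\tilde\beta_\infty=0$, so that $\frac{\partial F_n}{\partial u}(u_n)=-4\lambda_n\sum_{j\ge1}(-1)^jA_{j,n_1}\alpha_j+B(u)$. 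Passing from $u$ to $\hat u$ only affects the $\alpha_j$ with $j_0\le j\le j_1$ and the $\tilde\beta_j$ they influence, and since $\tilde\beta_{j_0-1}=\tilde\beta_{j_1}=0$ both for $u$ and for $\hat u$ in cases i)--ii) (and one‑sidedly in cases iii)--iv)), the part of $B$ coming from outside the block is unchanged; thus
\[
\frac{\partial F_n}{\partial u}(u_n)-\frac{\partial F_n}{\partial\hat u}(u_n)=-4\lambda_n\sum_{j=j_0}^{j_1}(-1)^jA_{j,n_1}\alpha_j+\bigl(B(u)-B(\hat u)\bigr),
\]
and the lemma is the statement that this is nonnegative.

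Next I would evaluate $B(u)-B(\hat u)$. On a block of constant sign, the optimality of the choice of $\tilde v(u)$ forces consecutive jumps to telescope, $|\tilde\beta_j-\alpha_j|+|\alpha_{j+1}-\tilde\beta_j|=|\alpha_{j+1}-\alpha_j|$, so the block contributes $|\alpha_{j_0}|+|\alpha_{j_1}|+\sum_{j=j_0}^{j_1-1}|\alpha_{j+1}-\alpha_j|$ to $B(u)$; in cases iii)--iv) the ``large'' jump bordering $I_{n_1}$ also enters the block, but the same telescoping absorbs it and the change of the special value $\tilde\beta_{n_1-1}$ or $\tilde\beta_{n_1}$ inside the term $-(-1)^{n_1}(\tilde\beta_{n_1-1}+\tilde\beta_{n_1})$ cancels it exactly. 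Under the monotonicity hypothesis this telescoping sum collapses to $2|\alpha_{j^\ast}|$, where $j^\ast$ is the endpoint of the block nearest to $n_1$ (so $j^\ast=j_0$ in cases i) and iii), $j^\ast=j_1$ in cases ii) and iv)); consequently $B(u)-B(\hat u)=2|\alpha_{j^\ast}|$ in cases i)--ii) and $B(u)-B(\hat u)=0$ in cases iii)--iv).

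Then I would bound the linear term. Since $\alpha_j$ keeps a fixed sign on the block and $A_{j,n_1}$ is monotone --- strictly decreasing on $\{j\ge n_1\}$ by \eqref{A_jdecrea}, strictly increasing on $\{j(n_1)\le j<n_1\}$ and vanishing on $\{2\le j\le n_1-k\}$ by \eqref{A_jincrea} --- the products $A_{j,n_1}|\alpha_j|$ are monotone along the block (decreasing in cases i), iii); increasing in cases ii), iv)). An alternating sum of a monotone sequence is dominated by its extreme term, which (at least when $j_0\ge2$) gives $\bigl|\sum_{j=j_0}^{j_1}(-1)^jA_{j,n_1}\alpha_j\bigr|\le A_{j^\ast,n_1}|\alpha_{j^\ast}|$; combining this with $4\lambda_nA_{n_1,n_1}=2$ and with $A_{j^\ast,n_1}<A_{n_1,n_1}$ (valid because $j^\ast\ne n_1$) yields $\bigl|4\lambda_n\sum_{j=j_0}^{j_1}(-1)^jA_{j,n_1}\alpha_j\bigr|<2|\alpha_{j^\ast}|=B(u)-B(\hat u)$, which proves cases i)--ii). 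In cases iii)--iv), where $B(u)-B(\hat u)=0$, I would instead check directly that the alternating sum $\sum_{j=j_0}^{j_1}(-1)^jA_{j,n_1}\alpha_j$ has the sign making $-4\lambda_n\sum_{j=j_0}^{j_1}(-1)^jA_{j,n_1}\alpha_j\ge0$.

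The delicate point --- which I expect to be the main obstacle --- is the index $j=1$, which can occur only in cases ii) and iv) when $j_0=1$. There the coefficient $A_{1,n_1}$ sits outside the monotone pattern: by \eqref{A_jincrea} it is strictly below the first nonzero coefficient $A_{j(n_1),n_1}$, so the stray term $(-1)^1A_{1,n_1}\alpha_1$ cannot be estimated in isolation and must be absorbed into the first two block terms. Here one uses precisely the inequality $A_{1,n_1}+A_{j(n_1),n_1}<A_{j(n_1)+1,n_1}$ from Claim~\ref{crucialclaim}, together with $|\alpha_1|\le|\alpha_{j(n_1)}|\le|\alpha_{j(n_1)+1}|$ from the monotonicity hypothesis, to conclude that $A_{1,n_1}|\alpha_1|+A_{j(n_1),n_1}|\alpha_{j(n_1)}|<A_{j(n_1)+1,n_1}|\alpha_{j(n_1)+1}|$, after which the remaining terms pair up with a definite sign and the desired inequality follows. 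With this, plus the routine bookkeeping of the special indices $n_1-1$ and $n_1$ in cases iii)--iv), the four cases are complete; by symmetry it suffices to treat one choice of the common sign of the $\alpha_j$ on the block, since this sign is pinned by the hypotheses in cases iii)--iv) and irrelevant in cases i)--ii).
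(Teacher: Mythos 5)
Your proposal is correct and follows essentially the same route as the paper's proof: the same four-case split, the identity $\|u_n+\varepsilon w\|_{BV(\mathbb{R})}=\|u_n+\varepsilon\hat w\|_{BV(\mathbb{R})}+2\varepsilon|\alpha_{j^\ast}|$ (with $j^\ast=j_0$ or $j_1$) in cases i)--ii) and equality in cases iii)--iv), the alternating-sum bound via the monotonicity of $A_{j,n_1}|\alpha_j|$ from \eqref{A_jincrea}--\eqref{A_jdecrea} together with $4\lambda_nA_{n_1,n_1}=2$, and the same special treatment of $j_0=1$ through the inequalities $A_{1,n_1}<A_{j(n_1),n_1}$ and $A_{1,n_1}+A_{j(n_1),n_1}<A_{j(n_1)+1,n_1}$. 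Your only addition is the fully explicit jump-by-jump formula $B(u)$ for the directional derivative, which is a presentational refinement of what the paper states directly.
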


\begin{proof} We call $w=u+\tilde{v}(u)$ and $\hat{w}=\hat{u}+\tilde{v}(\hat{u})$.

 Let us prove the first case.  We have that
$$\|u_n+\varepsilon w \|_{BV(\mathbb{R})}=\|u_n+\varepsilon \hat{w}\|_{BV(\mathbb{R})}+2\varepsilon |\alpha_{j_0}|,$$
using \eqref{liminf} when $j_1=+\infty$.
The contribution given by these $\alpha_j$, $j_0\leq j\leq j_1$ or $j_0\leq j$ if $j_1=+\infty$, is 
$$-4\lambda_n\varepsilon\sum_{j=j_0}^{j_1}(-1)^jA_{j,n_1}\alpha_j$$
and by \eqref{A_jdecrea}
$$\left|-4\lambda_n\varepsilon\sum_{j=j_0}^{j_1}(-1)^jA_{j,n_1}\alpha_j\right|\leq 4\lambda_n\varepsilon A_{j_0,n_1}  |\alpha_{j_0}| \leq 2\varepsilon |\alpha_{j_0}|$$
and the first case is proved.

About the second case, the argument is completely analogous. In fact,
we have that
$$\|u_n+\varepsilon w\|_{BV(\mathbb{R})}=\|u_n+\varepsilon \hat{w}\|_{BV(\mathbb{R})}+2\varepsilon |\alpha_{j_1}|.$$
The contribution given by these $\alpha_j$, $j_0\leq j\leq j_1$, is 
$$-4\lambda_n\varepsilon\sum_{j=j_0}^{j_1}(-1)^jA_{j,n_1}\alpha_j$$
and by \eqref{A_jincrea}
$$\left|-4\lambda_n\varepsilon\sum_{j=j_0}^{j_1}(-1)^jA_{j,n_1}\alpha_j\right|\leq 2\varepsilon |\alpha_{j_1}|.$$
Some extra care is required in this case. If $2\leq j_0$, then we easily obtain
$$\left|-4\lambda_n\varepsilon\sum_{j=j_0}^{j_1}(-1)^jA_{j,n_1}\alpha_j\right|\leq 4\lambda_n\varepsilon A_{j_1,n_1}  |\alpha_{j_1}|\leq 2\varepsilon |\alpha_{j_1}|.$$
If $j_0=1$ and $j_1> j(n_1)$, then
\begin{multline*}
\left|-4\lambda_n\varepsilon\sum_{j=j_0}^{j_1}(-1)^jA_{j,n_1}\alpha_j\right|=
\left|-4\lambda_n\varepsilon (-1) A_{1,n_1}\alpha_1    -4\lambda_n\varepsilon\sum_{j=j(n_1)}^{j_1}(-1)^jA_{j,n_1}\alpha_j\right|\\
\leq 4\lambda_n\varepsilon A_{j_1,n_1}  |\alpha_{j_1}|\leq 2\varepsilon |\alpha_{j_1}|
\end{multline*}
since we use $A_{1,n_1}< A_{j(n_1)}$ when $j(n_1)$ is even and $A_{1,n_1}+ A_{j(n_1)}<A_{j(n_1)+1}$ when $j(n_1)$ is odd.
The same argument works when $j_1= j(n_1)$ with $j(n_1)$ even.

If $j_0=1$ and $j_1= j(n_1)$ with $j(n_1)$ odd, then
$$
\left|-4\lambda_n\varepsilon\sum_{j=j_0}^{j_1}(-1)^jA_{j,n_1}\alpha_j\right|=4\lambda_n\varepsilon
\left| A_{1,n_1}\alpha_1 + A_{j(n_1),n_1}\alpha_{j(n_1)}\right|\\
\leq 2\varepsilon |\alpha_{j_1}|
$$
since we use $A_{1,n_1}+ A_{j(n_1)}<A_{j(n_1)+1}\leq 1/(2\lambda_n)$.

Finally, if $j_0=1$ and $j_1<j(n_1)$, then, since $A_{1,n_1}<1/(2\lambda_n)$,
$$
\left|-4\lambda_n\varepsilon\sum_{j=j_0}^{j_1}(-1)^jA_{j,n_1}\alpha_j\right|=4\lambda_n\varepsilon
\left| A_{1,n_1}\alpha_1 \right| \leq 2\varepsilon |\alpha_1|
\leq 2\varepsilon |\alpha_{j_1}|.
$$

Both in the third and fourth case, we have $\|u_n+\varepsilon w\|_{BV(\mathbb{R})}=\|u_n+\varepsilon \hat{w}\|_{BV(\mathbb{R})}$.
In the third case, $-4\lambda_n\varepsilon(-1)^{n_1+1}A_{n_1+1,n_1}\alpha_{n_1+1}>0$ and
the contribution given by
these $\alpha_j$, $j_0\leq j\leq j_1$ or $j_0\leq j$ if $j_1=+\infty$, is 
 $$-4\lambda_n\varepsilon\sum_{j=n_1+1}^{j_1}(-1)^jA_{j,n_1}\alpha_j$$
which is also positive by \eqref{A_jdecrea}.

In the fourth case, again $-4\lambda_n\varepsilon(-1)^{n_1-1}A_{n_1-1,n_1}\alpha_{n_1-1}>0$
and
the contribution given
these $\alpha_j$, $j_0\leq j\leq j_1$, is 
$$-4\lambda_n\varepsilon\sum_{j=j_0}^{n_1-1}(-1)^jA_{j,n_1}\alpha_j$$
which is also positive by \eqref{A_jincrea}, using a similar care as for the second case.
In particular, the only case when $j_1=n_1-1<j(n_1)$ is when $j(n_1)=n_1$ and that happens only for $n_1=2$, therefore in this case $j_0=j_1=1$ and $n_1=2$.\end{proof}

\medskip

Next, we show that if the monotonicity properties of Lemma~\ref{monotonelemma} are not satisfied, they can be achieved by iterating the following procedure. We begin with a definition.
\begin{defin}
In the first and third case, we say that $j_0< k_0\leq k_1\leq j_1$ define a \emph{strict local maximum region} if $|\alpha_j|=\alpha$ for any $k_0\leq j\leq k_1$ and
$|\tilde{\beta}_{k_0-1}|<\alpha$ and $|\tilde{\beta}_{k_1}|<\alpha$. Note that $k_1<+\infty$ by \eqref{liminf}.
We call $\tilde{\alpha}=\max\{|\tilde{\beta}_{k_0-1}|,|\tilde{\beta}_{k_1}|\}$.

In the second and fourth case, we say that $j_0< k_0\leq k_1\leq j_1$ define a \emph{strict local minimum region} if $|\alpha_j|=\alpha$ for any $k_0\leq j\leq k_1$,
$|\alpha_{k_0-1}|>\alpha$ and either $k_1=j_1$ or $|\alpha_{k_1+1}|>\alpha$.
We call $\tilde{\alpha}=|\alpha_{k_0-1}|$ if $k_1=j_1$
or 
$\tilde{\alpha}=\min\{|\alpha_{k_0-1}|,|\alpha_{k_1+1}|\}$.
\end{defin}

\begin{oss} We satisfy the assumptions of Lemma~\ref{monotonelemma} if and only if there is no local maximum or, respectively, minimum region. For this purpose, when $j_1=+\infty$ we use in particular \eqref{liminf}.
\end{oss}

\begin{lem}
Let $u^1$ be obtained from $u$ by the following procedure.

In the first and third case, for any local maximum region we drop $|\alpha_j|=\alpha$, $k_0\leq j\leq k_1$, to $\tilde{\alpha}$.

In the second and fourth case, for any local minimum region we raise $|\alpha_j|=\alpha$, $k_0\leq j\leq k_1$, to $\tilde{\alpha}$.

Then
$\displaystyle \frac{\partial F_n}{\partial u}(u_n)\geq \frac{\partial F_n}{\partial u^1}(u_n)$,
directional derivatives defined as in \eqref{dirdernew}.
\end{lem}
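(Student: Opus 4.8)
The plan is to use the additive splitting of the directional derivative provided by \eqref{dirdernew}: write $\frac{\partial F_n}{\partial u}(u_n)=L(u)+V(u)$ with linear part $L(u)=-4\lambda_n\sum_{j\ge1}(-1)^jA_{j,n_1}\alpha_j$ and total‑variation part $V(u)=\limsup_{\varepsilon\to0^+}\varepsilon^{-1}(\|u_n+\varepsilon(u+\tilde v(u))\|_{BV(\R)}-b)$, so that the claim becomes $(L(u)-L(u^1))+(V(u)-V(u^1))\ge0$. The first step is the observation that the passage $u\mapsto u^1$ changes only the coefficients $\alpha_j$ with $k_0\le j\le k_1$, and hence, through the rule defining $\tilde v$, only the $\tilde\beta_j$ with $k_0-1\le j\le k_1$; since by construction $[k_0,k_1]$ lies strictly inside a sign block $[j_0,j_1]$, it avoids the index $n_1$ (and in cases \ref{case1}) and \ref{case2}) also $n_1-1$ and $n_1+1$), so there is no interference with the jump of size $\approx b$ that $u_n$ produces on $\partial I_{n_1}$. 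Consequently $L(u)-L(u^1)$ is a finite alternating sum of the $A_{j,n_1}$ and $V(u)-V(u^1)$ is controlled by finitely many jumps of the step function $u_n+\varepsilon(u+\tilde v(u))$; both can be evaluated explicitly, and the $\limsup$ defining $V$ is in fact a limit, since for small $\varepsilon$ each relevant jump is affine in $\varepsilon$ with a fixed sign.

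Next I would compute $V(u)-V(u^1)$ by drawing the step function on the intervals $I_j$, $L_j$ and inserting the explicit values of $\tilde v$. A strict local maximum region (cases \ref{case1}) and \ref{case3})) appears as a plateau of height $\varepsilon\alpha$ flanked by two heights of modulus $|\tilde\beta_{k_0-1}|$, $|\tilde\beta_{k_1}|$, both $<\alpha$; lowering the plateau to $\varepsilon\tilde\alpha=\varepsilon\max\{|\tilde\beta_{k_0-1}|,|\tilde\beta_{k_1}|\}$ strictly decreases each of the two flanking jumps, so $V(u)-V(u^1)\ge2|\alpha-\tilde\alpha|$. Symmetrically, a strict local minimum region (cases \ref{case2}) and \ref{case4})) is a plateau flanked by two strictly larger heights; when $k_1<j_1$, raising it to $\varepsilon\tilde\alpha$ again gives $V(u)-V(u^1)\ge2|\alpha-\tilde\alpha|$, but the delicate sub‑case is $k_1=j_1$, where one flanking jump drops by $|\tilde\alpha-\alpha|$ while the jump facing the sign change at $j_1$ grows by the same amount, so that $V(u)-V(u^1)=0$ and the whole estimate must come from the linear part.

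For the linear part I would invoke the unimodality of $A_{\cdot,n_1}$, exactly as in the proof of Lemma~\ref{monotonelemma}: by \eqref{A_jdecrea} the sequence $j\mapsto A_{j,n_1}$ is positive and strictly decreasing for $j>n_1$ (cases \ref{case1}) and \ref{case3})), and by \eqref{A_jincrea} positive and increasing with $A_{j,n_1}<A_{n_1,n_1}=1/(2\lambda_n)$ for $j\le n_1-1$ (cases \ref{case2}) and \ref{case4})), the exceptional index $j=1$ being handled via the first and third inequalities of \eqref{A_jincrea}. Since the modified $\alpha_j$ share a common sign $s$ and a common modulus ($\alpha$ before, $\tilde\alpha$ after), one has $L(u)-L(u^1)=\pm4\lambda_n s|\alpha-\tilde\alpha|\sum_{j=k_0}^{k_1}(-1)^jA_{j,n_1}$, and the alternating‑series estimate for a monotone nonnegative sequence bounds $|4\lambda_n\sum_{j=k_0}^{k_1}(-1)^jA_{j,n_1}|$ by $4\lambda_nA_{k_0,n_1}\le2$ (cases \ref{case1}) and \ref{case3})) or by $4\lambda_nA_{k_1,n_1}<2$ (cases \ref{case2}) and \ref{case4})). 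In the generic cases this yields $(L(u)-L(u^1))+(V(u)-V(u^1))\ge-2|\alpha-\tilde\alpha|+2|\alpha-\tilde\alpha|=0$. In the residual sub‑case $k_1=j_1$, where $V(u)-V(u^1)=0$, the same estimate shows that $\sum_{j=k_0}^{j_1}(-1)^jA_{j,n_1}$ has sign $(-1)^{j_1}$, and one checks that the rule producing $\tilde\beta_{j_1}=0$ simultaneously fixes the sign $s$ of the block so that $L(u)-L(u^1)\ge0$: this is automatic in cases \ref{case3}) and \ref{case4}), where $s=(-1)^{n_1}$ is built into the definition, and it follows by the analogous parity analysis forced by $\tilde\beta_{j_1}=0$ in case \ref{case2}).

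The step I expect to be the main obstacle is precisely this last, edge‑case bookkeeping: keeping careful track of the signs of the relevant $\tilde\beta_j$ near the endpoints of the block and near the indices $n_1-1$, $n_1$, and verifying in each of the finitely many configurations with $k_1=j_1$ that the residual linear term — the only surviving contribution there — carries the correct sign. Once the plateau picture is drawn, the remaining (generic) cases are largely mechanical.
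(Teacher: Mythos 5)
Your overall strategy coincides with the paper's: split \eqref{dirdernew} into the linear part $L(u)=-4\lambda_n\sum_{j}(-1)^jA_{j,n_1}\alpha_j$ and the total variation part $V(u)$, show that each flattened/raised region lowers $V$ by $2|\alpha-\tilde\alpha|$, and bound $|L(u)-L(u^1)|$ on each region by $4\lambda_n(\max_{k_0\le j\le k_1}A_{j,n_1})\,|\alpha-\tilde\alpha|\le 2|\alpha-\tilde\alpha|$ via the alternating-sum estimate for the monotone sequence $A_{\cdot,n_1}$, exactly as in the first two cases of Lemma~\ref{monotonelemma}. Your treatment of local maximum regions and of local minimum regions with $k_1<j_1$ is correct and is what the paper does.

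The gap is in your endpoint sub-case $k_1=j_1$ for local minimum regions, and your bookkeeping there is wrong in both directions. In case \ref{case4}) the right end of the block is $j_1=n_1-1$, where $\tilde\beta_{n_1-1}=\alpha_{n_1-1}\neq 0$ and the adjacent interval $I_{n_1}$ carries $u_n=(-1)^{n_1}\tfrac b2\chi_{I_{n_1}}$, of the same sign as $\alpha_{n_1-1}$ and much larger modulus; raising the plateau therefore \emph{decreases} the jump across $L_{n_1-1}$ and one still gets $V(u)-V(u^1)=2|\alpha-\tilde\alpha|$, contrary to your claim that it vanishes. Moreover your assertion that the linear term is ``automatically'' of the right sign there is false: with $s=(-1)^{n_1}$ and $\sum_{j=k_0}^{n_1-1}(-1)^jA_{j,n_1}$ of sign $(-1)^{n_1-1}$, one finds $L(u)-L(u^1)\le 0$, so in case \ref{case4}) it is the $V$ term, not the $L$ term, that saves the inequality. (Case \ref{case3}) involves only maximum regions, so no minimum sub-case occurs there.)

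The genuinely problematic configuration is case \ref{case2}) with $k_1=j_1$, where indeed $\tilde\beta_{j_1}=0$ and $V(u)-V(u^1)=0$, and your proposed fix does not work: $\tilde\beta_{j_1}=0$ only records that $\alpha_{j_1}\alpha_{j_1+1}\le 0$ and imposes no relation between the parity of $j_1$ and the sign $s$ of the block, so the sign of $L(u)-L(u^1)=4\lambda_n(\tilde\alpha-\alpha)\,s\sum_{j=k_0}^{j_1}(-1)^jA_{j,n_1}$, which is that of $s(-1)^{j_1}$, can be negative. Concretely, take $n_1=4$, $\alpha_1=-2$, $\alpha_2=-1$, $\alpha_3=1$, $\alpha_j=0$ for $j\ge 4$: the block is $[j_0,j_1]=[1,2]$, the minimum region is $k_0=k_1=j_1=2$ with $\tilde\alpha=2$, a direct computation gives $\|u_n+\varepsilon(u+\tilde v(u))\|_{BV(\R)}=\|u_n+\varepsilon(u^1+\tilde v(u^1))\|_{BV(\R)}$ while $L(u)-L(u^1)=-4\lambda_nA_{2,4}<0$, so your two estimates sum to a strictly negative quantity. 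This is exactly the point where the paper's one-line identity $\|u_n+\varepsilon w\|_{BV(\R)}=\|u_n+\varepsilon w^1\|_{BV(\R)}+2\varepsilon|\alpha-\tilde\alpha|$ is asserted without addressing the $k_1=j_1$ configuration, so you have correctly isolated a real subtlety, but your parity argument does not close it; the sub-case needs a different treatment (or an argument that it can be excluded) before the proof is complete.
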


\begin{proof} We use the same kind of argument as in the proof of first two cases in Lemma~\ref{monotonelemma}. In fact, we call $w^1=u^1+\tilde{v}(u^1)$ and note
that
$$\|u_n+\varepsilon w\|_{BV(\mathbb{R})}=\|u_n+\varepsilon w^1\|_{BV(\mathbb{R})}+2\varepsilon|\alpha-\tilde{\alpha}|.$$ Then for $k_0\leq j\leq k_1$, the values of $\alpha_j$ are constant so satisfy the required monotonicity property.\end{proof}

\medskip

By applying the same procedure to $u^1$, we construct a sequence $\{u^m\}_{m\in\mathbb{N}}$, with coefficients $\alpha_j^m$, $j\in\mathbb{N}$, along which the directional derivative is decreasing.
We note that $\alpha^m_{j_0}=\alpha_{j_0}$ for any $m\in\mathbb{N}$.
More importantly, for any of the regions $[j_0,j_1]$, if for some $u^m$ the coefficients $|\alpha^m_j|$, $j_0\leq j\leq j_1$, are decreasing or, respectively, increasing, then
 the procedure stops in this region, that is, $\alpha^{m+1}_j=\alpha^{m}_j$, for any $j_0\leq j\leq j_1$. Moreover, if $j_1$ is finite, the procedure has to stop after at most $j_1-j_0$ steps.
 In fact, there are at most $j_1-j_0$ intervals which are candidates to be $[k_0,k_1]$ and at each step the number of candidates reduces of at least one, hence the number of candidates is zero after 
 $j_1-j_0$ steps.
 
 After $n_1$ steps, for all regions $[j_0,j_1]$ in the second and fourth cases, the coefficients $|\alpha^{n_1}_j|$, $j_0\leq j\leq j_1$, are increasing, then we can apply Lemma~\ref{monotonelemma}, thus without loss of generality we can assume from the very beginning that $\alpha_j=0$ for any $j=1,\ldots,n_1$.

Let us consider the first and third cases. First of all we note that for any $j>n_1$ and any $m\in\mathbb{N}$,
$|\alpha_j^{m+1}|\leq|\alpha_j^m|\leq |\alpha_j|$.
Besides $j_0$, there might be other coefficients $\alpha_l$, $j_0<l\leq j_1$ which remain constant along the sequence $\{u^m\}_{m\in\mathbb{N}}$. In particular, this is true if
$|\alpha_j|\geq |\alpha_l|$ for any $j_0\leq j\leq l$. When $j_1=+\infty$, there exists a sequence $\{l_k\}_{k\in\mathbb{N}}$ such that  for any $k\in\mathbb{N}$ we have
$j_0<l_k<l_{k+1}$, $|\alpha_{j_0}|>|\alpha_{l_k}|>|\alpha_{l_{k+1}}|$ and
$|\alpha_j|\geq |\alpha_{l_k}|$ for any $j_0\leq j\leq l_k$. This may be easily constructed by induction using \eqref{liminf}. In fact, let $\tilde{l}_1>j_0$ be such that
$|\alpha_{j_0}|>|\alpha_{\tilde{l}_1}|$ and call $l_1$ the index minimizing $|\alpha_j|$ for $j_0\leq j\leq \tilde{l}_1$. Then, let 
$\tilde{l}_2>\tilde{l}_1$ be such that
$|\alpha_{l_1}|>|\alpha_{\tilde{l}_2}|$ and call $l_2$ the index minimizing $|\alpha_j|$ for $j_0\leq j\leq \tilde{l}_2$. Clearly $l_2>l_1$ and by iterating the procedure we construct the desired sequence. We conclude that for any $k\in\mathbb{N}$ and any $m\in\mathbb{N}$, we have
$\alpha_{l_k}^m=\alpha_{l_k}$. We can now conclude the proof of Theorem~\ref{examplethmthird}.

\begin{proof}[Proof of Theorem~\ref{examplethmthird}.]
We assume $\alpha_j=0$ for any $1\leq j\leq n_1$.
We easily infer that
there exists $\{\alpha_j^{\infty}\}_{j\in\mathbb{N}}$ such that,
as $m\to+\infty$, we have that $\alpha_j^m\to\alpha_j^{\infty}$ for any $j\in\mathbb{N}$ and, by the 
dominated convergence theorem, $u^m$ converges
 in $L^2(\R)$ to $u^{\infty}$, the function corresponding to the sequence $\{\alpha_j^{\infty}\}_{j\in\mathbb{N}}$. Then, for $m\in \mathbb{N}\cup\{+\infty\}$,
we call $w^m=u^m+\tilde{v}(u^m)$.
It is easily seen that, for any $m\in\mathbb{N}$,
$$\|u_n+\varepsilon w^{m+1}\|_{BV(\mathbb{R})} \leq \|u_n+\varepsilon w^m\|_{BV(\mathbb{R})}\leq \|u_n+\varepsilon w\|_{BV(\mathbb{R})},$$
therefore by semicontinuity of the total variation
\begin{equation}
\|u_n+\varepsilon w^{\infty}\|_{BV(\mathbb{R})}\leq  \|u_n+\varepsilon w^m\|_{BV(\mathbb{R})}\leq \|u_n+\varepsilon w\|_{BV(\mathbb{R})}.
\end{equation}
By the continuity of $\Lambda$, it is also easy to infer that as $m\to +\infty$
$$-2\lambda_n\varepsilon\langle\tilde{\Lambda}-\Lambda(\sigma_n),\Lambda(u^m)\rangle \to 
-2\lambda_n\varepsilon\langle\tilde{\Lambda}-\Lambda(\sigma_n),\Lambda(u^{\infty})\rangle.$$
Hence, we conclude that for any $m\in\mathbb{N}$
$$\displaystyle \frac{\partial F_n}{\partial u^{\infty}}(u_n)\leq 
 \frac{\partial F_n}{\partial u^m}(u_n)\leq  \frac{\partial F_n}{\partial u}(u_n),$$
where all these directional derivatives are defined as in \eqref{dirdernew}.

Finally, for all intervals $[j_0,j_1]$, $\{\alpha_j^{\infty}\}_{j\in\mathbb{N}}$ satisfies the monotonicity property of Lemma~\ref{monotonelemma}. In fact, whenever $j_1$ is finite, this is true for any $m\geq j_1-j_0$, hence for $m=+\infty$ as well. If $j_1=+\infty$, for any $k\in\mathbb{N}$, for any $m\geq l_k$ and any $j_0\leq j\leq l_k$, we have that $|\alpha_j^m|$ is constant
with respect to $m$ and decreasing with respect to $j$. Hence $|\alpha_j^{\infty}|$ is decreasing with respect to $j$ on $[j_0,l_k]$ for any $k\in \mathbb{N}$, hence on $[j_0,+\infty)$.

By Lemma~\ref{monotonelemma}, since $\widehat{u^{\infty}}=0$, we infer that
$\displaystyle \frac{\partial F_n}{\partial u^{\infty}}(u_n)\geq 0$ and the proof is concluded.\end{proof}

\section{Conclusions}
Our results suggest that for the multiscale procedure for inverse problems it might be convenient to adopt a Hilbert norm regularization, possibly also in the nonlinear case. Such kind of regularization might help both to guarantee convergence in the unknowns space and to stabilize the reconstruction procedure, which is an important desirable feature. In fact, the multiscale procedure for inverse problems might serve for two purposes. First, we solve the inverse problem and at the same time we have a multiscale decomposition of the solution which is driven by the inverse problem itself. Second, as $\lambda_n$ grows, the stability of the minimization problem
$$\min\left\{\lambda_n\big(d(\tilde{\Lambda},\Lambda(u))\big)^{\alpha}+(R(u))^{\beta}:\ u\in E\right\}$$
degrades, sometimes very rapidly. It can be speculated that \eqref{inductiveconstr} is instead more stable, by exploiting the fact 
that $\sigma_{n-1}$ is already a good approximation of the solution, thus we have a very nice initial guess and we can restrict our problem to a local one near $\sigma_{n-1}$. Hence, at the same level of resolution, the multiscale procedure might turn out to be more reliable.

On the other hand, if a different kind of regularization is used, our counterexamples suggest that it would be better to adopt the tighter multiscale procedure developed in \cite{M-N-R} instead of the classical one.

\bigskip

\noindent
\textbf{Acknowledgements.}
SR and LR are supported by the Italian MUR through the PRIN 2022 project “Inverse problems in PDE: theoretical and numerical analysis”, project code: 2022B32J5C (CUP B53D23009200006 and CUP F53D23002710006), under the National Recovery and Resilience Plan (PNRR), Italy, Mission 04 Component 2 Investment 1.1 funded by the European Commission - NextGeneration EU programme. SR is a member of the INdAM research group GNCS, which is kindly acknowledged. LR is also supported by the INdAM research group GNAMPA through 2025 projects.
Part of this work was done during a visit of LR to the Universit\`a di Modena e Reggio Emilia, whose kind hospitality is gratefully acknowledged.


\medskip

\end{document}